\newcommand\blfootnote[1]{%
  \begingroup
  \renewcommand\thefootnote{}\footnote{#1}%
  \addtocounter{footnote}{-1}%
  \endgroup
}
\theoremstyle{sltheoremstyle}
\newtheorem{theorem}{Theorem}[section]
\newtheorem{lemma}[theorem]{Lemma}
\newtheorem{claim}[theorem]{Claim}
\newtheorem{corollary}[theorem]{Corollary}
\newtheorem{question}[theorem]{Question}
\newtheorem{proposition}[theorem]{Proposition}
\newtheorem*{corollary-non}{Corollary}
\newtheorem*{lemma-non}{Lemma}
\newtheorem*{theorem-non}{Theorem}
\newtheorem*{proposition-non}{Proposition}
\newtheorem*{condition-non}{Condition}
\newtheorem*{conditions-non}{Conditions}
\theoremstyle{definition}
\newtheorem{remark}[theorem]{Remark}
\newtheorem{definition}[theorem]{Definition}
\newtheorem{convention}[theorem]{Convention}
\newcommand*{\img}[1]{%
    \raisebox{-.02\baselineskip}{%
        \includegraphics[
        height=\baselineskip,
        width=\baselineskip,
        keepaspectratio,
        ]{#1}%
    }%
}
\newcommand{\wh}[1]{{\widehat{#1}}}
\newcommand{\set}[1]{\left\{ #1 \right\}}
\newcommand{\va}[1]{\left| #1 \right|}
\newcommand{\bracktorsion}{(\textnormal{torsion})}
\newcommand{\equivcohom}{\rm H^{2k}_G(X(\CC), \ZZ(k))}
\newcommand{\CC}{\mathbb{C}}
\newcommand{\OO}{\mathcal{O}}
\newcommand{\FF}{\mathbb{F}}
\newcommand{\QQ}{\mathbb{Q}}
\newcommand{\et}{\textnormal{\'et}}
\newcommand{\ch}{\textnormal{ch}}
\newcommand{\Hdg}{\textnormal{Hdg}}
\newcommand{\GL}{\textnormal{GL}}
\newcommand{\SL}{\textnormal{SL}}
\newcommand{\RR}{\mathbb{R}}
\newcommand{\ZZ}{\mathbb{Z}}
\newcommand{\CH}{\textnormal{CH}}
\newcommand{\Pic}{\textnormal{Pic}}
\newcommand{\Ker}{\textnormal{Ker}}
\newcommand{\NS}{\textnormal{NS}}
\newcommand{\tors}{\textnormal{tors}}
\newcommand{\id}{\textnormal{id}}
\newcommand{\alg}{\textnormal{alg}}
\newcommand{\Gal}{\textnormal{Gal}}
\newcommand{\Ima}{\textnormal{Im}}
\newcommand{\Aut}{\textnormal{Aut}}
\newcommand{\Spec}{\textnormal{Spec}}
\newcommand{\Sp}{\textnormal{Sp}}
\newcommand{\ca}[1]{{\mathcal{#1}}}
\newcommand{\bb}[1]{{\mathbb{#1}}}
\newcommand{\msf}[1]{{\mathsf{#1}}}
\newcommand{\mr}[1]{{\mathscr{#1}}}
\newcommand{\tn}[1]{{\textnormal{#1}}}
\let\rm\relax 
\newcommand{\rm}[1]{{\mathrm{#1}}}
\DeclareSymbolFont{bbm}{U}{bbm}{m}{n}
\DeclareSymbolFontAlphabet{\mathbbm}{bbm}
\begin{document}

\title{\Large{\textbf{On the integral Hodge conjecture\\for real abelian threefolds}}}
\author{Olivier de Gaay Fortman}
\date{\vspace{-7ex}}




\maketitle 


%

\begin{abstract}\noindent
We prove the real integral Hodge conjecture for several classes of real abelian threefolds. For instance, we prove the property for real abelian threefolds with connected real locus, and for real abelian threefolds $A$ which are the product $A = B \times E$ of an abelian surface $B$ and an elliptic curve $E$ with connected real locus $E(\RR)$. Moreover, we show that every real abelian threefold satisfies the real integral Hodge conjecture modulo torsion, and reduce the principally polarized case to the Jacobian case.
\end{abstract}


\section{Introduction}\label{sec:intro}

The Hodge structure on the singular cohomology ring of a smooth projective variety $X$ over $\CC$ is a powerful tool to study the algebraic cycles on $X$. Where the Hodge conjecture predicts that the space of rational Hodge classes in any degree $2k$ is spanned by algebraic cycles, the \emph{integral Hodge conjecture} asks for something stronger, namely that this is already true with $\ZZ$-coefficients. As such, the integral Hodge conjecture is a property rather than a conjecture: it may hold or fail depending on $k$ and the geometry of $X$ \cite{atiyahintegralhodge, trento, totarocobordism, voisinintegralhodge, colliotthelenevoisin, stabalyirrationalschreieder,benoistottem, totaroIHCthreefolds, beckmanndegaayfortman}. 

Recently, the analogue of the integral Hodge conjecture for real algebraic varieties has been formulated \cite{BW20, BW21}. Let $X$ be a smooth projective variety over $\RR$, and let $G = \Gal(\CC/\RR)$. Building on work of Krasnov \cite{krasnovcharact,krasnovgroth} and Van Hamel \cite{vanhamel}, Benoist and Wittenberg define a subgroup $\Hdg^{2k}_G(X(\CC), \ZZ(k))_0$ of the $G$-equivariant cohomology group $\rm H^{2k}_G(X(\CC), \ZZ(k))$ in the sense of Borel, and study the cycle class map 
\begin{align}\label{realcycleclassmap}
\CH_i(X) \to \rm \Hdg^{2k}_G(X(\CC), \ZZ(k))_0, \quad \quad i + k = \dim(X).
\end{align}
The \emph{real integral Hodge conjecture for $i$-cycles} refers to the property that  (\ref{realcycleclassmap}) is surjective. As in the complex situation, this property holds for every $X$ if $i \in \{\dim(X), \dim(X)-1, 0\}$ \cite{krasnovcharact, mangoltehamel, vanhamel, BW20}, but may fail for other values of $i \in \set{0, 1, \dotsc, \dim(X)}$. 

Complex uniruled threefolds, as well as threefolds $X$ over $\CC$ with $K_X = 0$, satisfy the integral Hodge conjecture by work of Grabowski, Voisin and Totaro \cite{grabowski, voisinintegralhodge, totaroIHCthreefolds}. In \cite[Question 2.16]{BW20}, Benoist and Wittenberg ask whether the same is true over $\RR$. In fact, in \cite{BW21} they provide positive answers for various classes of uniruled threefolds. For real Calabi-Yau varieties, however, nothing seems to be known. \blfootnote{\emph{Date:} \today}
In particular, one may ask:


\begin{question} \label{question}
Do real abelian threefolds satisfy the real integral Hodge conjecture? 
\end{question}
\noindent
The goal of this paper is to provide evidence towards a positive answer to Question \ref{question}. 
\\
\\
Let us explain our results. If $A$ is a real abelian variety, then for each $k \in \bb Z_{\geq 0}$ the following sequence is exact (see Lemma \ref{lemma:important0}):
\begin{align*}
0 \to \Hdg^{2k}_G(A(\CC), \ZZ(k))_0[2] \to \Hdg^{2k}_G(A(\CC), \ZZ(k))_0 \to \Hdg^{2k}(A(\CC), \ZZ(k))^G \to 0.
\end{align*}\begin{definition}A real abelian variety $A$ satisfies the \emph{real integral Hodge conjecture for $i$-cycles modulo torsion} if the following map is surjective:
\begin{align} \label{eq:modtors}
\CH_i(A) \to \Hdg^{2k}(A(\CC), \ZZ(k))^G,  \quad \quad i + k = \dim(A). 
\end{align}
We say that $A$ satisfies the \emph{real integral Hodge conjecture modulo torsion} if $A$ satisfies the real integral Hodge conjecture for $i$-cycles modulo torsion for 
every $i \in \set{0, 1, \dotsc, \dim(A)}$.  
\end{definition}
\noindent
Our first main result is as follows. 

\begin{theorem} \label{theorem1}
Every abelian threefold over $\RR$ satisfies the real integral Hodge conjecture modulo torsion. 
\end{theorem}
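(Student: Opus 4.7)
The plan is to prove surjectivity of the cycle class map
\[
\CH_i(A) \longrightarrow \Hdg^{2k}(A(\CC), \ZZ(k))^G, \quad i + k = 3,
\]
separately for each $i \in \set{0, 1, 2, 3}$. For $i = 3$ the map is the identity on $\ZZ$ via the degree; for $i = 2$ surjectivity is the real Lefschetz $(1,1)$-theorem (applicable because $A$ has the real point $0$); and for $i = 0$ surjectivity of the degree map on zero-cycles is immediate from $0 \in A(\RR)$. Thus the essential case is $i = 1$, $k = 2$.

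For $i = 1$, first observe that $\Hdg^4(A(\CC), \ZZ(2))^G$ is torsion-free, as a subgroup of $H^4(A(\CC), \ZZ) \cong \bigwedge^4 H^1(A(\CC), \ZZ)$. The plan combines two ingredients: (i) Grabowski's theorem on the complex integral Hodge conjecture for $1$-cycles on complex abelian threefolds, asserting that $\Hdg^4(A_\CC, \ZZ(2))$ is generated as an abelian group by cycle classes of $1$-dimensional abelian subvarieties $E \subset A_\CC$ through the origin; and (ii) the rigidity statement that $[E]$ determines the abelian subvariety $E$, via the correspondence $E \mapsto \bigwedge^2 H_1(E, \ZZ) \subset \bigwedge^2 H_1(A_\CC, \ZZ) \cong H_2(A_\CC, \ZZ)$ (Plücker recovers the saturated rank-$2$ sublattice $H_1(E)$ from the rank-$1$ decomposable sublattice $\bigwedge^2 H_1(E)$). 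By this rigidity, if an abelian subvariety $E$ of $A_\CC$ satisfies $\sigma [E] = [E]$, then $\sigma(E) = E$ as subvarieties, and $E$ descends to a real abelian subvariety of $A$.

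Given $\alpha \in \Hdg^4(A(\CC), \ZZ(2))^G$, I would write $\alpha = \sum_i c_i [E_i]$ via Grabowski, enlarge the collection $\set{E_i}$ to be $G$-stable, and group the $E_i$'s into $G$-orbits: size-$1$ orbits yield real abelian subvarieties (by the rigidity), and size-$2$ orbits $\set{E, \sigma E}$ yield real $1$-cycles $E + \sigma E$ of class $[E] + [\sigma E]$. The hard part will be verifying that $\alpha$ admits a $G$-equivariant decomposition with integer coefficients: the classes $[E_i]$ may satisfy nontrivial integer relations in $\Hdg^4(A_\CC, \ZZ(2))$, so the $G$-invariance of $\alpha$ need not force the coefficients to be constant on each size-$2$ orbit, giving a potential $2$-torsion obstruction of the form $H^1(G, \Hdg^4(A_\CC, \ZZ(2)))$. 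To overcome this obstacle I would exploit the divisibility of the $(-1)$-eigenspace $A(\CC)^{-} = \set{x \in A(\CC) : \sigma x = -x}$: if a non-real elliptic curve $E \subset A_\CC$ satisfies $\sigma E = E + a$ with $\sigma a = -a$, then solving $\sigma(c) - c = -a$ for $c \in A(\CC)^{-}$ (possible by divisibility) produces a translate $E + c$ that is $\sigma$-invariant as a subvariety and has the same cohomology class as $E$. Combining this translation trick with the orbit-sum argument, together with the exact sequence preceding the statement of the theorem to absorb the residual $2$-torsion, would then produce for each $G$-invariant $\alpha$ a real $1$-cycle representing $\alpha$, establishing the real integral Hodge conjecture modulo torsion.
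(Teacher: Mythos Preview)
Your proposal rests on a misreading of Grabowski's theorem. Grabowski does \emph{not} prove that $\Hdg^4(A_\CC,\ZZ(2))$ is generated by classes of one-dimensional abelian subvarieties of $A_\CC$. In fact this is false: a very general complex abelian threefold is simple, so it contains no nonzero proper abelian subvarieties whatsoever, yet $\Hdg^4(A_\CC,\ZZ(2))$ always contains the nonzero minimal class $\theta^2/2$ attached to a polarization $\theta$. What Grabowski actually proves is only that every integral Hodge class on a complex abelian threefold is algebraic; the curves realizing these classes are produced by pulling back Abel--Jacobi images of genus-three curves along isogenies to Jacobians, and are almost never elliptic. Once (i) fails, your rigidity/orbit-sum mechanism collapses: for an arbitrary curve $C\subset A_\CC$ the equality $\sigma[C]=[C]$ in no way forces $\sigma(C)$ to be a translate of $C$, so there is no way to descend the cycles Grabowski supplies.

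The paper's proof instead transports Grabowski's \emph{actual} method to $\RR$. A Fourier-transform argument (Theorem~\ref{grabowski}) reduces surjectivity of \eqref{eq:modtors} for $i=1$ to the algebraicity of the minimal class $\gamma_\theta=\theta^2/2$ on every \emph{principally} polarized real abelian threefold $(A,\theta)$; this reduction step already requires the nontrivial fact (Theorem~\ref{principalisogeny}) that every polarized real abelian variety is real-isogenous to a principally polarized one. Then density of $(p,q)$-Hecke orbits in each component of $\ca A_3(\RR)$ for distinct odd primes $p,q$ (Theorem~\ref{density}, Corollary~\ref{usefulcorollary}) yields a real isogeny $\phi\colon A\to J(C)$ to the Jacobian of a real genus-three curve with $C(\RR)\neq\emptyset$, with $\phi^\ast\theta_C=p^nq^m\theta$. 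Since $\theta_C^2/2=[\iota(C)_\CC]$ is visibly algebraic, so is $\phi^\ast(\theta_C^2/2)=p^{2n}q^{2m}\cdot\theta^2/2$; as $\theta^2$ is algebraic and $p^{2n}q^{2m}$ is odd, $\theta^2/2$ is algebraic.
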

\noindent
By using the Hochschild-Serre spectral sequence (see Section \ref{subsec:hochschild}), one can calculate the torsion rank of the equivariant cohomology of a real abelian threefold. We obtain:

\begin{corollary} \label{IHCforconnected}
Let $A$ be an abelian threefold over $\RR$ such that $A(\RR)$ is connected. Then $A$ satisfies the real integral Hodge conjecture. 
\end{corollary}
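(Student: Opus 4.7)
The plan combines Theorem \ref{theorem1} with a direct computation of the 2-torsion of the equivariant Hodge group via the Hochschild-Serre spectral sequence. By Theorem \ref{theorem1}, for every $i \in \{0,1,2,3\}$ with $k = 3 - i$ the map $\CH_i(A) \to \Hdg^{2k}(A(\CC), \ZZ(k))^G$ is surjective. Combined with the exact sequence
\[
0 \to \Hdg^{2k}_G(A(\CC), \ZZ(k))_0[2] \to \Hdg^{2k}_G(A(\CC), \ZZ(k))_0 \to \Hdg^{2k}(A(\CC), \ZZ(k))^G \to 0
\]
from Lemma \ref{lemma:important0}, a routine diagram chase reduces the real integral Hodge conjecture for $A$ to showing that the 2-torsion piece $\Hdg^{2k}_G(A(\CC), \ZZ(k))_0[2]$ is in the image of the equivariant cycle class map. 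Since the real integral Hodge conjecture is automatic for $i \in \{0,2,3\}$, only the case $i = 1$ remains, and it suffices to control $\Hdg^{4}_G(A(\CC), \ZZ(2))_0[2]$.

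The next step is to bound this group via the Hochschild-Serre spectral sequence
\[
E_2^{p,q} = H^p(G, H^q(A(\CC), \ZZ(2))) \Rightarrow H^{p+q}_G(A(\CC), \ZZ(2)).
\]
Under the connectedness hypothesis on $A(\RR)$, the $G$-module $H^1(A(\CC), \ZZ)$ is isomorphic to the regular representation $\ZZ[G]^3$, so in particular $H^p(G, H^1(A(\CC), \ZZ)) = 0$ for all $p \geq 1$. Decomposing the exterior powers $\wedge^q \ZZ[G]^3$ into the indecomposable $\ZZ[G]$-summands $\ZZ_+$, $\ZZ_-$, $\ZZ[G]$, and using that $H^{\geq 1}(G, \ZZ[G]) = 0$, one cuts the 2-torsion of $H^4_G(A(\CC), \ZZ(2))$ down to a very small group, essentially the contribution from $E_\infty^{4,0}$ pulled back from the base point $\mathrm{Spec}(\RR)$.

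It then remains to combine this calculation with the real-locus compatibility built into the definition of $\Hdg_0$ (Benoist-Wittenberg): using that $A(\RR)$ is a $3$-dimensional real torus, so that $H^\ast(A(\RR), \FF_2)$ is the exterior algebra on three generators, one verifies that $\Hdg^{4}_G(A(\CC), \ZZ(2))_0[2]=0$, or alternatively that any surviving 2-torsion class is hit by an explicit algebraic 1-cycle on $A$. Combined with the reduction above, this yields the corollary.

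The principal obstacle is this last step: Hochschild-Serre bounds the 2-torsion of $H^4_G(A(\CC), \ZZ(2))$ but does not, by itself, incorporate the Hodge and real-locus conditions defining $\Hdg_0$. The connectedness of $A(\RR)$ is essential here, simplifying both the $G$-module structure of $H^\ast(A(\CC), \ZZ)$ (via the $\ZZ[G]^3$ description) and the topology of the real locus, so that both the spectral-sequence bound and the compatibility at real points are tractable; for a disconnected real locus these simplifications fail and extra 2-torsion classes appear that would require separate geometric input.
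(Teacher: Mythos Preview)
Your proposal is correct and follows essentially the same route as the paper: reduce via Theorem~\ref{theorem1} and Lemma~\ref{lemma:important0} to showing that $\Hdg^4_G(A(\CC),\ZZ(2))_0[2]=0$, then use $\rm H^1(A(\CC),\ZZ)\cong\ZZ[G]^3$ and the exterior-power decomposition to compute the graded pieces $\rm H^p(G,\rm H^q(A(\CC),\ZZ))$ for $p+q=4$, $p>0$.

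One small remark: you are in fact slightly more careful than the paper's own write-up. The paper asserts that \emph{all} groups $\rm H^p(G,\rm H^q(A(\CC),\ZZ))$ with $p+q=4$ and $p>0$ vanish, but strictly speaking the $(p,q)=(4,0)$ term is $\rm H^4(G,\ZZ)\cong\ZZ/2$. Your observation that the only surviving contribution to the $2$-torsion is $E_\infty^{4,0}$, pulled back from $\Spec(\RR)$, and that this class is killed by the topological condition defining the subscript~$0$ (since it restricts nontrivially to any real point), is exactly the right way to finish; this is the content of the paper's later remark that $F^4\rm H_0=(0)$ in Section~\ref{subsec:analysis}. So your hedging in the final paragraph is unnecessary: no extra geometric input is needed, and the alternative of producing an explicit algebraic $1$-cycle is not required.
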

\noindent
Our proof of Theorem \ref{theorem1} is inspired by Grabowski's proof of the integral Hodge conjecture for complex abelian threefolds \cite{grabowski}. It consists of two steps:
\begin{enumerate}
\item \label{firstreduction} Reduce the real integral Hodge conjecture for one-cycles modulo torsion for abelian varieties of dimension $g$ to the algebraicity of the minimal class $$\gamma_\theta =\theta^{g-1}/(g-1)! \in \Hdg^{2g-2}(A(\CC), \ZZ(g-1))^G$$ for every principally polarized abelian variety $(A, \theta)$ of dimension $g$ over $\RR$.   
\item \label{secondreduction} Reduce the algebraicity of $\gamma_\theta$ on a principally polarized real abelian threefold $A$ to the case where $A = J(C)$ is the Jacobian of a real algebraic curve $C$ of genus three with non-empty real locus, where this is clear. 
\end{enumerate}
\noindent
An essential ingredient for the first reduction step is the fact that any polarized abelian variety over $\RR$ is isogenous to a principally polarized one. Although the analogue of this statement over any algebraically closed field is classical \cite{MumfordAV}, it fails over general fields \cite{Howe2001}. We will prove this fact in Theorem \ref{principalisogeny}. 

As for step \ref{secondreduction}, our strategy is to use Hecke orbits.
For $g \geq 1$, let $\ca A_g$ be the moduli stack of principally polarized abelian varieties of dimension $g$, and consider the real-analytic topology on its real locus $\ca A_g(\RR)$ (c.f.\ \cite[Definition 7.5 \& Theorem 8.1]{MR4479836} or \cite[Definition 2.7 \& Theorem 2.15]{thesis-degaayfortman}). For integers $a$ and $b$, define the \emph{$(a, b)$-Hecke orbit} of a moduli point $[(A, \lambda)] \in \ca A_g(\RR)$ as the set of $[(B, \mu)] \in \ca A_g(\RR)$ admitting an isogeny $A \to B$ preserving the polarizations up to a product of powers of $a$ and $b$ (see Definition \ref{def:heckeorbits}). Hecke orbits are well-known to be dense in $\ca A_g(\CC)$ (see 
\cite[Lemma 7.14]{thesis-degaayfortman}); we obtain the following real analogue. 
\begin{theorem} \label{density}
Let $p$ and $q$ be distinct odd prime numbers. 
The $(p,q)$-Hecke orbit of any point $x \in \ca A_g(\RR)$ is analytically dense in the connected component of $\ca A_g(\RR)$ containing $x$.  
\end{theorem}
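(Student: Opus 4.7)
The plan is to identify the $(p,q)$-Hecke orbit in the connected component through $x$ with the orbit of an $S$-arithmetic subgroup acting on the real symmetric space uniformizing that component, and then to invoke strong approximation.

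Let $C \subset \ca{A}_g(\RR)$ denote the connected component containing $x$. By the uniformization results cited just above the statement, $C$ admits a presentation as an arithmetic quotient $\Gamma \backslash X$, where $X = H(\RR)^+/K$ is the symmetric space of a connected reductive $\QQ$-algebraic group $H$ whose $\RR$-points sit inside $\Sp_{2g}(\RR)$ as the stabilizer of the anti-holomorphic involution on $\mathfrak{H}_g$ of the type defining $C$, $K$ is a maximal compact subgroup, and $\Gamma \subset H(\QQ)$ is an arithmetic lattice. The first task is to show that $(p,q)$-Hecke correspondences preserve $C$: an isogeny $f \colon A \to B$ with $f^*\mu = p^a q^b \lambda$ has degree $(p^a q^b)^g$, which is odd since $p, q$ are odd, and the discrete invariants classifying connected components of $\ca{A}_g(\RR)$ (Comessatti-type invariants, the number of components of $A(\RR)$, and the mod-$2$ refinement of the polarization) are insensitive to odd-degree isogenies and to rescaling polarizations by odd integers. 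Hence the $(p,q)$-Hecke orbit of $x$ lies entirely in $C$.

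Next, fixing a lift $\tilde x \in X$ of $x$, I would identify the lifted $(p,q)$-Hecke orbit of $x$ with the orbit $H(\ZZ[1/pq]) \cdot \tilde x$ under the $S$-arithmetic subgroup $H(\ZZ[1/pq])$ for $S = \{p, q, \infty\}$. This translation follows because isogenies with polarization scaling in $p^\ZZ q^\ZZ$ correspond to elements of $H(\QQ)$ whose denominators are supported on $\{p, q\}$, modulo the action of $\Gamma$.

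Density of $H(\ZZ[1/pq]) \cdot \tilde x$ in $X$ then reduces to density of $H(\ZZ[1/pq])$ in $H(\RR)^+$. Decomposing $H$ isogenously as $H^{\rm{der}} \cdot Z(H)^0$, I would handle the two factors separately. The derived group $H^{\rm{der}}$ is semisimple with $H^{\rm{der}}(\QQ_p)$ non-compact (inheriting non-compactness from $\Sp_{2g}(\QQ_p)$), so strong approximation for its simply connected cover yields density of $H^{\rm{der}}(\ZZ[1/p])$ in $H^{\rm{der}}(\RR)$. For the central torus $Z(H)^0$, density of $Z(H)^0(\ZZ[1/pq])$ in $Z(H)^0(\RR)^+$ reduces via the logarithm to density of $\ZZ\log p + \ZZ \log q$ in $\RR$, which holds because $\log p / \log q \notin \QQ$ for distinct primes $p, q$.

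The most delicate step is the first one: verifying that all discrete invariants distinguishing the connected components of $\ca{A}_g(\RR)$ are preserved under odd-degree isogenies and odd polarization rescalings. Without the odd prime hypothesis, $(2,q)$-Hecke correspondences could in principle move points between connected components, in which case density within a single component would fail.
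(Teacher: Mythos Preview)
Your outline is essentially the paper's proof, recast abstractly. The paper makes everything concrete via Silhol's uniformization: the component of type $\tau$ is $\GL_g^\tau(\ZZ)\backslash \rm H_g$, where $\rm H_g$ is the cone of positive definite symmetric matrices and $\GL_g^\tau(\ZZ)\subset\GL_g(\ZZ)$ is the congruence subgroup preserving $M(\tau)\bmod 2$. An explicit embedding $f_\tau\colon\GL_g(\RR)\hookrightarrow\Sp_{2g}(\RR)$ then identifies the $(p,q)$-Hecke orbit with the $\GL_g^\tau(\ZZ[1/pq])$-orbit. From there the argument is exactly yours: strong approximation gives $\overline{\SL_g(\ZZ[1/pq])}=\SL_g(\RR)$, the congruence condition costs only finite index (hence nothing, by connectedness of $\SL_g(\RR)$), and the determinant is handled by the density of $\{\pm p^nq^m\}$ in $\RR^\ast$, i.e.\ $\ZZ\log p+\ZZ\log q$ dense in $\RR$.

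Two places where your abstraction needs care, which the explicit identification with $\GL_g$ resolves. First, your claim that the central torus argument ``reduces to density of $\ZZ\log p+\ZZ\log q$ in $\RR$'' presupposes $Z(H)^0$ is one-dimensional; for a general reductive $H\subset\Sp_{2g}$ this is not automatic, and a higher-dimensional center would require more than two primes. Second, you write the $S$-arithmetic group as $H(\ZZ[1/pq])$, but the paper shows the Hecke orbit corresponds to the congruence subgroup $\GL_g^\tau(\ZZ[1/pq])$ (the full $\GL_g(\ZZ[1/pq])$ does not in general preserve the type $\tau$); this must be absorbed as a finite-index step, which the paper does explicitly. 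Finally, your justification ``$H^{\rm der}(\QQ_p)$ non-compact, inheriting from $\Sp_{2g}(\QQ_p)$'' is not a valid inference in general (subgroups of non-compact groups can be compact); the paper avoids this by working directly with $\SL_g$.
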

\begin{corollary} \label{usefulcorollary}
Let $p$ and $q$ be as above. Every principally polarized abelian threefold over $\RR$ is isogenous, via an isogeny that preserves the polarizations up to a product of powers of $p$ and $q$, to the Jacobian of a non-hyperelliptic curve with non-empty real locus. 
\end{corollary}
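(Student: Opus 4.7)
The plan is as follows. Let $(A,\theta)$ be a principally polarized abelian threefold over $\RR$ with moduli point $x \in \ca A_3(\RR)$, and let $K$ denote the connected component of $\ca A_3(\RR)$ containing $x$. Define $V \subset \ca A_3(\RR)$ to be the set of moduli points $[(J(C), \theta_C)]$ with $C$ a smooth non-hyperelliptic real curve of genus $3$ satisfying $C(\RR) \neq \emptyset$. The corollary reduces to showing (i) $V$ is analytically open, and (ii) $V$ meets every connected component $K$ of $\ca A_3(\RR)$: granted these, Theorem~\ref{density} shows that the $(p,q)$-Hecke orbit of $x$, being analytically dense in $K$, meets $V$, producing the required isogeny.

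For (i), the Torelli morphism $\tau\colon \ca M_3 \to \ca A_3$ over $\CC$ is generically injective and surjects onto the Zariski open complement $Z$ of the decomposable locus (both sides have dimension $6$). Restricted to the non-hyperelliptic locus, $\tau$ is étale onto a Zariski open $Z^{\mathrm{nh}} \subset Z$. The condition $C(\RR) \neq \emptyset$ is analytically open in $\ca M_3(\RR)$ by the implicit function theorem, and by the real Torelli theorem every point of $Z^{\mathrm{nh}}(\RR)$ is the Jacobian of a unique real non-hyperelliptic curve. Combining these identifies $V$ with the image in $\ca A_3(\RR)$ of an open subset of $\ca M_3(\RR)$ under an étale map, hence $V$ is analytically open.

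The main obstacle is (ii). The connected components of $\ca A_3(\RR)$ are classified by topological invariants of $(A(\RR), \theta)$, notably by the Comessatti invariant $\lambda \in \{0,1,2,3\}$, where $|\pi_0(A(\RR))| = 2^{\lambda}$, together with finitely many mod-$2$ refinements coming from the polarization. To handle (ii), I would invoke the classical Comessatti--Gross--Harris comparison between the topology of a real curve $C$ (its number of real ovals and its dividing/non-dividing type) and that of $J(C)(\RR)$, and for each component of $\ca A_3(\RR)$ exhibit an explicit real smooth plane quartic $C$ with $C(\RR) \neq \emptyset$ whose Jacobian realizes the corresponding topological type, for instance by smoothing suitable real nodal quartics with prescribed configurations of real components. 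This, combined with the density of Hecke orbits in $K$, completes the proof.
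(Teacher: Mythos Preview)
Your proposal is correct and follows essentially the same strategy as the paper: reduce to showing that the locus $V$ of Jacobians of non-hyperelliptic real curves with non-empty real locus is open and meets every connected component of $\ca A_3(\RR)$, then apply Theorem~\ref{density}. For step (i) the paper argues, as you do, that the Torelli map is an open immersion on the non-hyperelliptic locus (Lemma~\ref{nonhyperelliptic-open}); for step (ii) the paper simply cites the Gross--Harris tables (\cite[p.~182, Proposition~3.1, and p.~174]{grossharris}) to produce, in each component of $\ca A_3(\RR)$, a component of $\ca M_3(\RR)$ consisting of curves with real points and containing non-hyperelliptic curves, which is cleaner than your proposed explicit construction of plane quartics by smoothing nodal configurations.
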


\noindent
Reduction step \ref{secondreduction} follows because if an odd multiple of $\theta^2/2$ is algebraic for a principally polarized real abelian threefold $(A, \theta)$, then $\theta^2/2$ is algebraic. In the course of proving Theorem \ref{theorem1}, we also show that the main theorem of \cite{beckmanndegaayfortman} has the following analogue over $\RR$: a principally polarized real abelian variety $(A, \theta)$ satisfies the real integral Hodge conjecture for one-cycles modulo torsion if and only if $\gamma_\theta$ is algebraic (see Theorem \ref{grabowski}). 

Corollary \ref{usefulcorollary} turns out to be useful for the general principally polarized case as well:

\begin{theorem} \label{reduction}
Let $\ca A_3(\RR)^+$ be a connected component of the moduli space of principally polarized real abelian threefolds. Assume the real integral Hodge conjecture holds for every Jacobian $J(C)$ such that $[J(C)] \in \ca A_3(\RR)^+$ and the real locus $C(\RR)$ of $C$ is non-empty. Then the real integral Hodge conjecture holds for every real abelian variety in $\ca A_3(\RR)^+$. 
\end{theorem}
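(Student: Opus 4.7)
The plan is to reduce the statement to showing algebraicity of the $2$-torsion subgroup of $\Hdg^{4}_{G}(A(\CC), \ZZ(2))_{0}$ for each $[A] \in \ca A_{3}(\RR)^{+}$, and then to transfer this algebraicity from a Jacobian via an odd-degree isogeny supplied by Corollary \ref{usefulcorollary}. Since the real integral Hodge conjecture for $i$-cycles is automatic when $i \in \{0,2,3\}$, the task is to prove surjectivity of the map
\[
\CH_{1}(A) \longrightarrow \Hdg^{4}_{G}(A(\CC), \ZZ(2))_{0}.
\]
By Theorem \ref{theorem1} this map already surjects onto the quotient $\Hdg^{4}(A(\CC), \ZZ(2))^{G}$ appearing in the exact sequence preceding Definition 1.2. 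A routine diagram chase then reduces us to showing that every element of the $2$-torsion subgroup $\Hdg^{4}_{G}(A(\CC), \ZZ(2))_{0}[2]$ is algebraic.

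To exploit the Jacobian hypothesis, I would fix distinct odd primes $p, q$ and apply Corollary \ref{usefulcorollary} to obtain a real isogeny $\phi \colon A \to J(C)$, where $C$ is a non-hyperelliptic real curve of genus three with $C(\RR) \neq \emptyset$, $[J(C)] \in \ca A_{3}(\RR)^{+}$, and $d := \deg(\phi) = p^{a}q^{b}$ is odd. By hypothesis, $J(C)$ then satisfies the real integral Hodge conjecture, so every class in $\Hdg^{4}_{G}(J(C)(\CC), \ZZ(2))_{0}$ is a cycle class. To transport this back to $A$, I would let $N$ be the exponent of $\ker(\phi)$; then $N \mid d$, so $N$ is odd, and there is a (unique) real isogeny $\psi \colon J(C) \to A$ satisfying $\psi \circ \phi = [N]_{A}$.

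The algebraicity of a given $\alpha \in \Hdg^{4}_{G}(A(\CC), \ZZ(2))_{0}[2]$ would then follow in three steps: first, since pullback preserves both the subgroup $(-)_{0}$ and the $2$-torsion, $\psi^{*}(\alpha)$ lies in $\Hdg^{4}_{G}(J(C)(\CC), \ZZ(2))_{0}[2]$, hence by the previous paragraph $\psi^{*}(\alpha) = \textnormal{cl}(z)$ for some $z \in \CH_{1}(J(C))$; second, by functoriality $\phi^{*}\psi^{*}(\alpha) = (\psi\phi)^{*}(\alpha) = [N]_{A}^{*}(\alpha) = N^{4}\alpha$; and finally, since $N^{4}$ is odd while $\alpha$ has order dividing $2$, we have $N^{4}\alpha = \alpha$, so $\alpha = \phi^{*}(\textnormal{cl}(z)) = \textnormal{cl}(\phi^{*}z)$ is algebraic.

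The only non-formal point in the sketch is the verification that the Benoist--Wittenberg subgroup $\Hdg^{2k}_{G}(-, \ZZ(k))_{0}$ is preserved by pullback along real isogenies, and that the identity $[N]^{*} = N^{2k}$ continues to hold in Borel equivariant cohomology. Both are expected to follow from the functoriality of the Borel construction and of the definition of $(-)_{0}$, but they are the steps where care is required; once they are in hand, the oddness of $d$ (and hence of $N$) does all of the work, and the argument is essentially formal.
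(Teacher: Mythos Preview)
Your approach is essentially the paper's: reduce via Theorem~\ref{theorem1} and Lemma~\ref{lemma:important0} to the algebraicity of the $2$-torsion, then transport this along the odd-degree isogeny to a Jacobian supplied by Corollary~\ref{usefulcorollary}. The paper packages the isogeny step as Lemma~\ref{algebraicity}, and the only substantive difference is precisely the point you flagged.

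The identity $[N]^{\ast}=N^{4}$ does \emph{not} hold on $\rm H^{4}_{G}(A(\CC),\ZZ(2))$. On the graded pieces $\rm H^{p}(G,\rm H^{4-p}(A(\CC),\ZZ(2)))$ of the Hochschild--Serre filtration, $[N]^{\ast}$ acts by $N^{4-p}$, not by $N^{4}$; for $p\ge 1$ these pieces are $\ZZ/2$-modules and $N^{4-p}$ is the identity there, but the filtration on the $2$-torsion is only non-canonically split, so one cannot conclude $[N]^{\ast}(\alpha)=\alpha$. (Over $\ZZ/2$, a unipotent automorphism need not be the identity.) What you do get is that $[N]^{\ast}$ is the identity on each graded piece, hence injective on the finite group $F^{1}\rm H^{4}_{G}(A(\CC),\ZZ(2))_{0}$, hence bijective. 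Your argument then repairs itself: you have shown that $[N]^{\ast}(\alpha)=\phi^{\ast}\psi^{\ast}(\alpha)$ is algebraic for every $\alpha$, and surjectivity of $[N]^{\ast}$ gives algebraicity of all of $F^{1}\rm H_{0}$. This bijectivity, together with the preservation of $(-)_{0}$ and of algebraicity in both directions, is exactly what Lemma~\ref{algebraicity} records.

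One further small point: that $[J(C)]$ lands in the same component $\ca A_{3}(\RR)^{+}$ is not part of the statement of Corollary~\ref{usefulcorollary}; it holds because an odd-degree isogeny induces an isomorphism of symplectic $\ZZ/2$-modules on $\rm H_{1}$, hence preserves the type (Step~1 in the proof of Theorem~\ref{density}, or \cite[p.~180]{grossharris} as cited in the paper's proof).
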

\noindent
In view of Theorems \ref{theorem1} and \ref{reduction}, Question \ref{question} restricted to the principally polarized case can be rephrased as follows. Let $C$ be a real algebraic curve of genus three with non-empty real locus. Is the torsion subgroup of $\Hdg^{4}_G(J(C)(\CC),\ZZ(2))_0$ algebraic? 

Our next result reduces this question further. The theorem concerns torsion cohomology classes of degree four on real abelian varieties $A$ of any dimension $g$. 
Let $F^\bullet$ be the filtration on $\Hdg^{2k}_G(A(\CC), \ZZ(k))_0 \subset \rm H^{2k}_G(A(\CC),\ZZ(k))$ induced by the Hochschild-Serre spectral sequence (see Section \ref{subsec:hochschild}). 

\begin{theorem} \label{fourierreduction}
Let $A$ be an abelian variety over $\RR$. The group $F^3\Hdg^{4}_G(A(\CC), \ZZ(2))_0$ is zero and the group $F^2\Hdg^{4}_G(A(\CC), \ZZ(2))_0$ is algebraic. 
\end{theorem}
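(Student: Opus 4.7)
The plan is to analyze the Hochschild-Serre spectral sequence
$E_2^{p,q} = \rm H^p(G, \rm H^q(A(\CC), \ZZ(2))) \Rightarrow \rm H^{p+q}_G(A(\CC), \ZZ(2))$
in total degree $4$, combined with Benoist-Wittenberg's real Lefschetz $(1,1)$-theorem for divisors and the compatibility of the equivariant cup product with the filtration $F^\bullet$. Writing $H := \rm H^1(A(\CC), \ZZ)$ as a $\ZZ[G]$-module, and using $\rm H^\bullet(A(\CC), \ZZ) = \wedge^\bullet H$, the relevant terms on the diagonal $p+q=4$ are
\[ E_2^{4, 0} = \rm H^4(G, \ZZ) \cong \ZZ/2, \quad E_2^{3, 1} = \rm H^3(G, H), \quad E_2^{2, 2} = \rm H^2(G, \wedge^2 H), \]
all annihilated by $2$. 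By the exact sequence recalled in the excerpt, every class in $F^p \cap \Hdg^4_G(A(\CC), \ZZ(2))_0$ with $p \geq 1$ has trivial image in $\Hdg^4(A(\CC), \ZZ(2))^G$, and so lies in $\Hdg^4_G(A(\CC), \ZZ(2))_0[2]$.

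For the first assertion $F^3 \Hdg^4_G(A(\CC), \ZZ(2))_0 = 0$, the key input is that the Benoist-Wittenberg $_0$-condition is controlled by the restriction to the topological cycle class on $A(\RR)$: a class in $\rm H^{2k}_G(A(\CC), \ZZ(k))$ lies in $\Hdg_0$ precisely when its image in $\rm H^{2k}(A(\RR), \ZZ/2)$ arises from the reduction modulo $2$ of a Hodge cycle on $A(\CC)$. I would argue that the generator of $E^{4,0}_\infty \subseteq \rm H^4(G, \ZZ) \cong \ZZ/2$ and any nonzero classes in $E^{3,1}_\infty$ produce realizations in $\rm H^\ast(A(\RR), \ZZ/2)$ that are incompatible with the $_0$-condition, thereby forcing them to vanish. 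For $E^{3,1}_\infty$, this is done after decomposing $H$ à la Comessatti and computing $\rm H^3(G, H)$ explicitly in terms of the invariant, anti-invariant, and mixed summands of the period lattice.

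For the second assertion, I would exploit the equivariant cup product
\[ \rm H^2_G(A(\CC), \ZZ(1)) \otimes \rm H^2_G(A(\CC), \ZZ(1)) \to \rm H^4_G(A(\CC), \ZZ(2)), \]
which is compatible with the Hochschild-Serre filtration ($F^{p_1} \cup F^{p_2} \subset F^{p_1+p_2}$) and sends $\Hdg_0 \otimes \Hdg_0$ into $\Hdg_0$. By Benoist-Wittenberg's real Lefschetz $(1,1)$-theorem, every class in $\Hdg^2_G(A(\CC), \ZZ(1))_0$ is algebraic, so cup products of such classes are algebraic in $\Hdg^4_G(A(\CC), \ZZ(2))_0$; moreover, cup products of $F^1$-classes in $\Hdg^2_G(A(\CC), \ZZ(1))_0$ land in $F^2 \Hdg^4_G(A(\CC), \ZZ(2))_0$. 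I would then show that these cup products exhaust $F^2 \cap \Hdg_0$ modulo $F^3$, which at the $E_2$-level reduces to the surjectivity of
\[ \rm H^1(G, H(1)) \otimes \rm H^1(G, H(1)) \to \rm H^2(G, \wedge^2 H) \]
onto the $\Hdg_0$-piece of $E^{2,2}_\infty$. Combined with the first claim $F^3 \cap \Hdg_0 = 0$, this establishes algebraicity of all of $F^2 \cap \Hdg_0$.

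The hardest step will be proving the $E_2$-level surjectivity of the cup product onto the $\Hdg_0$-piece of $\rm H^2(G, \wedge^2 H)$. The image of $\rm H^1(G, H(1)) \otimes \rm H^1(G, H(1)) \to \rm H^2(G, H \otimes H)$ factors through the antisymmetric subspace $\rm H^2(G, \wedge^2 H)$, and matching its image with the $\Hdg_0$-subspace requires a careful study of the $\ZZ[G]$-module structure of $H$ via Comessatti's classification of real period lattices, together with an explicit description of the $_0$-condition on $E^{2,2}_\infty$ in terms of divisor products.
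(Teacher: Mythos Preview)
Your approach is genuinely different from the paper's, and the hard step you flag is where I think it breaks down.

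\textbf{What the paper does.} For both assertions the paper uses the Fourier transform. After noting (via the $G$-equivariant homeomorphism with a product of elliptic curves) that one may assume $A$ is principally polarized with algebraic minimal class, a cycle $\Gamma \in \CH(A \times \wh A)$ lifting $\ch(\ca P_{A_\CC})$ induces correspondences $\Gamma_\ast^{i,j}$ on equivariant cohomology compatible with $F^\bullet$ and with the $_0$-condition. For $F^3$, this gives an isomorphism $\rm H^3(G,\rm H^1(A(\CC),\ZZ(2)))_0 \cong \rm H^3(G,\rm H^{2g-1}(\wh A(\CC),\ZZ(g{+}1)))_0$; the right side sits inside $\rm H^{2g+2}_G(\wh A(\CC),\ZZ(g{+}1))_0$, which vanishes by the real integral Hodge conjecture for zero-cycles. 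For $F^2$, the Fourier identifies $F^2\rm H^4_G(J(\CC),\ZZ(2))_0$ for a Jacobian $J$ with a direct summand of $F^2\rm H^{2g}_G(\wh J(\CC),\ZZ(g))_0$, which is algebraic again by zero-cycles; the general case is then reduced to Jacobians by embedding a curve $C \subset \wh A$ through enough real points (Bertini) so that $J(C) \twoheadrightarrow \wh A$ with surjective real locus, and pushing forward.

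\textbf{Where your cup-product argument has a gap.} Take $A=E^g$ with $E(\RR)$ disconnected, so $\rm H^1(E(\CC),\ZZ)=\ZZ x \oplus \ZZ y$ with $\sigma^\ast x=x$, $\sigma^\ast y=-y$. Then $\rm H^1(G,H(1))$ has basis $[x_1],\dots,[x_g]$, and the image of your cup product in $\rm H^2(G,\wedge^2 H)$ is spanned by the $[x_i\wedge x_j]$, of $\ZZ/2$-dimension $\binom{g}{2}$. But $\rm H^2(G,\wedge^2 H)$ also contains the classes $[y_i\wedge y_j]$, giving total dimension $2\binom{g}{2}$. Your argument therefore requires that the $_0$-condition kills precisely the span of the $[y_i\wedge y_j]$'s; you give no mechanism for this, and nothing in your description of the $_0$-condition (which you mischaracterize: it is a Steenrod-square constraint on the components $\alpha_i \in \rm H^i(A(\RR),\ZZ/2)$, not a condition about lifting to a Hodge class) suggests why the $y$-classes should behave differently from the $x$-classes in this respect. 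In fact the paper's proof makes $F^2_0$ algebraic via zero-cycles pushed through Fourier, not via intersections of divisors, and there is no reason to expect the two subgroups to coincide.

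Your sketch for $F^3=0$ (``nonzero classes in $E_\infty^{3,1}$ are incompatible with the $_0$-condition'') may be salvageable by direct computation with the Comessatti decomposition, but as written it is an assertion rather than an argument; the paper's Fourier route bypasses any such case analysis.
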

\noindent
The proof of Theorem \ref{fourierreduction} relies on Proposition 3.11 in \cite{beckmanndegaayfortman} and an analysis of the Abel-Jacobi map for zero-cycles. 

For an abelian variety $A$ over $\RR$, the Hochschild-Serre spectral sequence degenerates \cite{krasnovharnackthom}. 
Define $\rm H^1(G, \rm H^{3}(A(\CC), \ZZ(2)))_0$ as the image of the canonical homomorphism 
\begin{align} \label{align:F-one}
 F^1\Hdg^{4}_G(A(\CC), \ZZ(2))_0 \to \rm H^1(G, \rm H^{3}(A(\CC), \ZZ(2))).
\end{align}
In fact, for abelian threefolds $A$ over $\RR$, the homomorphism (\ref{align:F-one}) is surjective unless $\va{\pi_0(A(\RR))} = 8$ (Corollary \ref{corollary:n(A)}). Combining Theorems \ref{theorem1} and \ref{fourierreduction} with the compatibility of the cycle class map (\ref{realcycleclassmap}) and the real Abel-Jacobi map (Theorem \ref{abeljacobicommutativity}), we obtain:

\begin{corollary} \label{questionreductioncorollary}
Let $A$ be an abelian threefold over $\RR$. Then $A$ satisfies the real integral Hodge conjecture 
if and only if the Abel-Jacobi map 
\[
\CH_1(A)_{\textnormal{hom}} \to \rm H^1(G, \rm H^3(A(\CC), \ZZ(2)))_0
\]
is surjective, where $\CH_1(A)_{\textnormal{hom}}$ denotes the kernel of $\CH_1(A) \to \rm H^4(A(\CC), \ZZ(2))$. 
\end{corollary}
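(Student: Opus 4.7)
The plan is to reduce the statement to one-cycles and then analyse the cycle class map one filtration layer at a time. Since the real integral Hodge conjecture for $i$-cycles on any real smooth projective variety is automatic when $i \in \set{0, \dim A - 1, \dim A}$, the only non-trivial case for the threefold $A$ is $i = 1$, equivalently $k = 2$. So I only need to characterise when
\[
\CH_1(A) \longrightarrow \Hdg^4_G(A(\CC), \ZZ(2))_0
\]
is surjective, which I will do by filtering the target by the Hochschild--Serre filtration $F^\bullet$ (which degenerates by Krasnov's theorem). Theorem~\ref{fourierreduction} supplies $F^3 = 0$ and the algebraicity of $F^2$; the top quotient $\Hdg^4_G(A(\CC), \ZZ(2))_0 / F^1 \cong \Hdg^4(A(\CC), \ZZ(2))^G$, as given by the exact sequence of Lemma~\ref{lemma:important0}, is algebraic by Theorem~\ref{theorem1}; and the middle quotient identifies, by degeneration together with the definition of the subscript~$0$, as
\[
F^1 \Hdg^4_G(A(\CC), \ZZ(2))_0 \big/ F^2 \Hdg^4_G(A(\CC), \ZZ(2))_0 \;\cong\; \rm H^1(G, \rm H^3(A(\CC), \ZZ(2)))_0.
\]

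A one-cycle $Z$ has cycle class in $F^1$ precisely when its topological class in $\rm H^4(A(\CC), \ZZ(2))$ vanishes, i.e., when $Z \in \CH_1(A)_{\textnormal{hom}}$, and by Theorem~\ref{abeljacobicommutativity} the induced map $\CH_1(A)_{\textnormal{hom}} \to F^1/F^2$ coincides with the real Abel--Jacobi map into $\rm H^1(G, \rm H^3(A(\CC), \ZZ(2)))_0$. Given these identifications, both directions follow by a short diagram chase. For $(\Leftarrow)$: starting with any $\alpha \in \Hdg^4_G(A(\CC), \ZZ(2))_0$, Theorem~\ref{theorem1} lets me subtract an algebraic class to push $\alpha$ into $F^1$, the surjectivity of the Abel--Jacobi map then pushes it into $F^2$, and Theorem~\ref{fourierreduction} handles $F^2$. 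For $(\Rightarrow)$: any $\beta \in \rm H^1(G, \rm H^3(A(\CC), \ZZ(2)))_0$ lifts (by definition of the subscript $0$) to some $\tilde\beta \in F^1 \Hdg^4_G(A(\CC), \ZZ(2))_0$ which, by the real IHC, equals $[Z]$ for some $Z \in \CH_1(A)$; because $\tilde\beta \in F^1$, we have $Z \in \CH_1(A)_{\textnormal{hom}}$, and the real Abel--Jacobi class of $Z$ is $\beta$.

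Once the three cited theorems are granted, the argument is essentially a three-term diagram chase. The main obstacle is therefore book-keeping: verifying that the Hochschild--Serre filtration restricted to $\Hdg^4_G(A(\CC), \ZZ(2))_0$ is compatible with the exact sequence of Lemma~\ref{lemma:important0} at the top level and with Theorem~\ref{abeljacobicommutativity} at the middle level, and that the cycle class of a homologically trivial one-cycle indeed lands in $F^1$. No genuinely new input is needed beyond the results already established in the paper.
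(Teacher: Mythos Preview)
Your proof is correct and follows essentially the same approach as the paper: both reduce the real integral Hodge conjecture to the algebraicity of $F^1\rm H^4_G(A(\CC),\ZZ(2))_0$ via Lemma~\ref{lemma:important0} and Theorem~\ref{theorem1}, then peel off $F^2$ using Theorem~\ref{fourierreduction}, and identify the remaining quotient with the Abel--Jacobi target via Theorem~\ref{abeljacobicommutativity}. The paper's proof is a terse three-line citation of these same results; you have simply spelled out the diagram chase that is left implicit there.
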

\noindent
See Section \ref{section:AbelJacobi} for the definition of the real Abel-Jacobi map. 
One can then use Corollary \ref{questionreductioncorollary} and the K\"unneth formula 
to prove the following unconditional result. 

\begin{theorem} \label{th:abeliansurface}
Let $A = B \times E$ be the product of a real abelian surface $B$ and a real elliptic curve $E$ with $E(\RR)$ connected. Then $A$ satisfies the real integral Hodge conjecture. 
\end{theorem}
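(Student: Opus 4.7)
By Corollary \ref{questionreductioncorollary}, I must show that the real Abel-Jacobi map
\[
\AJ \colon \CH_1(A)_{\textnormal{hom}} \longrightarrow \h^1(G, \h^3(A(\CC), \ZZ(2)))_0
\]
is surjective. My plan is to decompose the target by the K\"unneth formula, collapse one summand via Shapiro's lemma using the hypothesis on $E(\RR)$, and then cover each remaining summand by an explicit family of $1$-cycles.

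The hypothesis that $E(\RR)$ is connected is equivalent to the $G$-module $\h^1(E(\CC), \ZZ)$ being the regular representation $\ZZ[G]$: writing $E(\CC) = \CC/(\ZZ + \tau\ZZ)$ with $\tau = 1/2 + it$ and $t > 0$, complex conjugation sends $\tau$ to $1 - \tau$, so the period lattice is $G$-cyclic. Since the cohomology of an abelian variety is torsion-free, the K\"unneth isomorphism is $G$-equivariant; combining with $\h^2(E, \ZZ(1)) \cong \ZZ$ (trivial $G$-action) and $\h^1(E, \ZZ(k)) \cong \ZZ[G]$ for every $k$, this yields
\[
\h^3(A(\CC), \ZZ(2)) \;\cong\; \h^3(B(\CC), \ZZ(2)) \;\oplus\; \bigl(\h^2(B(\CC), \ZZ(2)) \otimes \ZZ[G]\bigr) \;\oplus\; \h^1(B(\CC), \ZZ(1)).
\]
Shapiro's lemma gives $\h^1(G, M \otimes \ZZ[G]) = 0$ for every $G$-module $M$ (since the diagonal tensor product is the induced module $\textnormal{Ind}_{\{1\}}^{G} M$), so the middle summand contributes nothing on taking $\h^1(G, -)$, and consequently
\[
\h^1(G, \h^3(A, \ZZ(2)))_0 \;\cong\; \h^1(G, \h^3(B, \ZZ(2)))_0 \;\oplus\; \h^1(G, \h^1(B, \ZZ(1)))_0.
\]

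To cover the first summand, I would pull back zero-cycles via $\pi_B \colon A \to B$: for $Z_0 \in \CH_0(B)_{\textnormal{hom}}$, the cycle $\pi_B^\ast Z_0 = Z_0 \times E$ is hom trivial on $A$, and compatibility of the real Abel-Jacobi map with flat pullback identifies its image in $\h^1(G, \h^3(B, \ZZ(2)))_0$ with $\AJ_B(Z_0)$. Surjectivity of $\CH_0(B)_{\textnormal{hom}} \to \h^1(G, \h^3(B, \ZZ(2)))_0$ follows from the real integral Hodge conjecture for $0$-cycles on the real abelian surface $B$, a case known to hold for every smooth projective real variety. To cover the second summand, I would fix $e_0 \in E(\RR)$ and use the section $\iota_{e_0} \colon B \hookrightarrow A$, $b \mapsto (b, e_0)$: for a divisor $D$ on $B_\RR$ representing a class in $\Pic^0(B_\RR) = B^\vee(\RR)$, the pushforward $D \times \{e_0\}$ is hom trivial, and its Abel-Jacobi value equals $[D] \in J^1(B) \cong B^\vee$ in the K\"unneth summand of $J^2(A)$ indexed by $\h^1(B) \otimes \h^2(E)$. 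Taking $G$-cohomology of the short exact sequence $0 \to \h^1(B, \ZZ(1)) \to \h^1(B, \ca O_B) \to B^\vee \to 0$, and using that $\h^1(G, \h^1(B, \ca O_B)) = 0$ since $\h^1(B, \ca O_B)$ is uniquely $2$-divisible, yields a canonical identification $\h^1(G, \h^1(B, \ZZ(1))) \cong \pi_0(B^\vee(\RR))$ onto which the image of these cycles surjects.

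The principal technical obstacle will be to verify that the spectral-sequence edge map $F^1 \Hdg^4_G(A, \ZZ(2))_0 \to \h^1(G, \h^3(A, \ZZ(2)))_0$ intertwines the real Abel-Jacobi on $A$ with the K\"unneth-compatible constructions described above, so that the two partial surjectivities assemble into the required one. This rests on the functoriality of the Hochschild-Serre spectral sequence under proper pushforward and flat pullback, together with Theorem \ref{fourierreduction}, which guarantees that $F^2 \Hdg^4_G(A, \ZZ(2))_0$ is algebraic and therefore does not obstruct surjectivity onto the $\h^1(G, \h^3)_0$ quotient.
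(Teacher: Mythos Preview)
Your proof is correct and follows essentially the same route as the paper: reduce via Corollary \ref{questionreductioncorollary}, use K\"unneth together with $\h^1(E(\CC),\ZZ)\cong\ZZ[G]$ to kill the middle summand, and then cover the two remaining summands by zero-cycles on $B$ and by divisors on $B$ times a point of $E$. The paper's only stylistic improvement is that it ignores the subscript~$_0$ entirely and proves surjectivity of $\AJ$ onto the full group $\h^1(G,\h^3(A(\CC),\ZZ(2)))$---this is cleaner because your claimed K\"unneth decomposition of the $_0$-subgroups is not literally well-defined (the $_0$ on each summand refers to a different variety and degree), whereas surjecting onto the full group automatically forces $\h^1(G,\h^3)_0=\h^1(G,\h^3)$ and sidesteps the issue.
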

\noindent
It follows that there are no topological obstructions to the real integral Hodge conjecture for those real abelian threefolds $A$ with $\va{\pi_0(A(\RR))}  \neq 8$. In the case $A = E^3$ for a real elliptic curve $E$ with disconnected $E(\RR)$, we reduce the real integral Hodge conjecture to the algebraicity of one specific class $\omega \in \rm H^1(G, \rm H^3(A(\CC), \ZZ(2)))_0$, see Proposition \ref{prop:finalprop}. 

Finally, we establish a connection between the real integral Hodge conjecture and the Griffiths group of an abelian threefold $A$ over $\RR$. Let $\mathscr C^2(A) \subset \rm{Griff}^2(A_\CC)^G \otimes \ZZ/2$ be the subgroup generated by pull-backs of Ceresa cycles $[\Xi] \in \rm{Griff}^2(J(C)_\CC)^G \otimes \ZZ/2$ along isogenies $\phi \colon A \to J(C)$ to Jacobians of curves of genus three over $\RR$ (c.f.\ Section \ref{sec:ceresa}). 
\begin{theorem} \label{thm:ceresa}Let $A$ be a real abelian threefold such that $A(\RR)$ is not connected. Suppose that the complex abelian threefold $A_\CC$ is very general. 
If $A$ satisfies the real integral Hodge conjecture, then $\mr C^2(A) \subsetneq \rm{Griff}^2(A_\CC)^G \otimes \ZZ/2$. 
\end{theorem}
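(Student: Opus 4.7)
The plan is to argue by the contrapositive via Corollary \ref{questionreductioncorollary}, which reformulates the real integral Hodge conjecture for $A$ as the surjectivity of the real Abel-Jacobi map
\[
\rm{AJ}_\RR \colon \CH_1(A)_{\tn{hom}} \twoheadrightarrow \rm H^1(G, \rm H^3(A(\CC), \ZZ(2)))_0.
\]
The target is a finite $2$-torsion abelian group and, because the complex Abel-Jacobi map vanishes on algebraic equivalence modulo torsion, $\rm{AJ}_\RR$ factors through $\tn{Griff}^2(A_\CC)^G \otimes \ZZ/2$. Thus, assuming the real integral Hodge conjecture, the strict inclusion $\mr C^2(A) \subsetneq \tn{Griff}^2(A_\CC)^G \otimes \ZZ/2$ will follow once I exhibit a class in $\rm H^1(G, \rm H^3(A(\CC), \ZZ(2)))_0$ that does not lie in the image $\rm{AJ}_\RR(\mr C^2(A))$.

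Let $V \subseteq \rm H^1(G, \rm H^3(A(\CC), \ZZ(2)))_0$ denote the subgroup generated by the classes $\rm{AJ}_\RR(\phi^*[\Xi_C])$ as $\phi \colon A \to J(C)$ ranges over real isogenies to Jacobians of genus-three real curves $C$ with $C(\RR) \neq \emptyset$. By the functoriality of the real Abel-Jacobi map (Theorem \ref{abeljacobicommutativity}), each generator factors as $\phi^*\bigl(\rm{AJ}_{J(C), \RR}([\Xi_C])\bigr)$, where $\phi^*$ is the pullback on the Galois cohomology of $\rm H^3$. It therefore suffices to show that $V$ is a proper subgroup of $\rm H^1(G, \rm H^3(A(\CC), \ZZ(2)))_0$.

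The main obstacle, and the heart of the proof, is to establish this properness. I would analyze $\rm H^1(G, \rm H^3(A(\CC), \ZZ(2)))_0$ via the Hochschild-Serre spectral sequence of Section \ref{subsec:hochschild}: when $A(\RR)$ is disconnected, summands appear in this cohomology group that are absent in the connected case. One would then argue that, for a very general complex abelian threefold $A_\CC$ with $\tn{End}(A_\CC) = \ZZ$, the real Abel-Jacobi classes $\rm{AJ}_{J(C), \RR}([\Xi_C])$ of Ceresa cycles on real genus-three Jacobians lie in a restricted subgroup of $\rm H^1(G, \rm H^3(J(C)(\CC), \ZZ(2)))_0$ determined by the component-group structure of $J(C)(\RR)$, and that this restriction is preserved by pullback along any real isogeny $\phi \colon A \to J(C)$. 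A class in the component-sensitive part of $\rm H^1(G, \rm H^3(A(\CC), \ZZ(2)))_0$ lying outside this restricted region would then, by the real integral Hodge conjecture, lift to an element of $\tn{Griff}^2(A_\CC)^G \otimes \ZZ/2$ not in $\mr C^2(A)$, giving the desired strict inclusion. Identifying the precise controlled subgroup on the Jacobian side and verifying its pullback-invariance is where the bulk of the work lies.
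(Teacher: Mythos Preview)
Your overall strategy is in the right neighborhood, but there are two concrete gaps.

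First, the factorization you claim in your second paragraph is false as stated. The real Abel-Jacobi map $\CH^2(A)_{\tn{hom}} \to \rm H^1(G,\Lambda)$ does \emph{not} kill algebraically trivial cycles: a cycle $\alpha \in \CH^2(A_\CC)_{\alg}^G$ has $\tn{AJ}_\CC(\alpha) \in J^2(A_\CC)_{\alg}^G$, and the component class of this point in $\rm H^1(G,\Lambda_{\alg})$ can be nonzero. So there is no map $\rm{Griff}^2(A_\CC)^G \otimes \ZZ/2 \to \rm H^1(G,\Lambda)_0$, and your reduction to ``$V$ is a proper subgroup of $\rm H^1(G,\Lambda)_0$'' does not go through. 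The paper fixes this by passing to the quotient $\rm H^1(G,\Lambda_{\tn{tr}})$ where $\Lambda_{\tn{tr}} = \Lambda/\Lambda_{\alg}$; the Abel-Jacobi map \emph{does} factor through the Griffiths group after this projection. This is precisely where the ``very general'' hypothesis enters: one uses it (via a monodromy/irreducibility argument, Lemma \ref{lemma:charles}) to identify $J^2(A_\CC)_{\alg}$ with $\wh A_\CC$, so that $\Lambda_{\alg} = \rm H^1(A(\CC),\ZZ(1))$ embedded by the polarization, and then a rank count (using that $A(\RR)$ is disconnected) shows $\rm H^1(G,\Lambda)_0 \to \rm H^1(G,\Lambda_{\tn{tr}})$ is nonzero. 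Your mention of $\End(A_\CC)=\ZZ$ does not by itself produce this.

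Second, the part you flag as ``the bulk of the work'' --- controlling the Abel-Jacobi images of Ceresa cycles --- has a clean answer you are missing: the equivariant cycle class of $\Xi_p$ in $\rm H^4_G(J(C)(\CC),\ZZ(2))$ is in fact \emph{zero} for every real genus-three curve $C$ with $p\in C(\RR)$, so its image in $\rm H^1(G,\Lambda)$ (and a fortiori in $\rm H^1(G,\Lambda_{\tn{tr}})$) vanishes. The argument is a deformation: the real hyperelliptic locus meets every connected component of $\ca M_3(\RR)$ that parametrizes curves with real points, and on a hyperelliptic curve the Ceresa cycle based at a real Weierstrass point is zero in $\CH^2$; one then transports this vanishing along a real path using equivariant local triviality of the relative Jacobian and invariance of equivariant classes under translation by points of $J(C)(\RR)^0$. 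Without this input your proposal has no mechanism to distinguish Ceresa pullbacks from arbitrary homologically trivial one-cycles.
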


\subsection{Acknowledgements}

This paper is based on results that appear in the last chapter of my PhD thesis \cite{thesis-degaayfortman}. I thank my advisor Olivier Benoist for the great guidance he has given me over the past three years. In particular, I owe him much for drawing my attention to the real integral Hodge conjecture, and for the many discussions we had concerning this project. 

I thank Nicolas Tholozan for several discussions on moduli of real abelian varieties, and in particular for explaining to me how to prove Theorem \ref{density}. Moreover, I thank Fabrizio Catanese, Fran\c{c}ois Charles and Javier Fres\'an for stimulating conversations, and I thank Olivier Benoist and Olivier Wittenberg for useful comments on an earlier version of this paper. 

This research received funding from the European Union’s Horizon 2020 research and innovation programme under the Marie Skłodowska-Curie grant agreement N\textsuperscript{\underline{o}}754362 \img{EU}. 

\section{The real integral Hodge conjecture} \label{sec:realintegralhodge}

\subsection{Generalities}Let $X$ be a smooth projective variety over $\RR$. The group \[G = \Gal(\CC/\RR) = \{ \id, \sigma \}\] acts on $X(\CC)$ via the canonical anti-holomorphic involution $\sigma \colon X(\CC) \to X(\CC)$. For $k \in \ZZ$, we denote by $\ZZ(k)$ the $G$-module that has $\ZZ$ as underlying $\ZZ$-module, on which $G$ acts by $\sigma(1) = (-1)^k$. Thus, $\ZZ(k) = \ZZ(q)$ for every $q \in \ZZ$ with $k \equiv q \bmod 2$. We also denote by $\ZZ(k)$ the constant $G$-sheaf on $X(\CC)$ attached to the $G$-module $\ZZ(k)$. For $k,q \in \ZZ_{\geq 0}$, the $G$-action on the group $\rm H^{k}(X(\CC), \ZZ(q))$ is understood to be the one induced by the involution $\rm H^k(\sigma) \circ F_\infty$, where $F_\infty = \sigma^\ast$ is the pull-back of the anti-holomorphic involution $\sigma$ on $X(\CC)$, and $\rm H^k(\sigma)$ is the involution on cohomology induced by $\sigma \colon \ZZ(q) \to \ZZ(q)$. 

Let $k \in \ZZ_{\geq 0}$. Attached to $X$ is also the so-called degree $2k$ \emph{equivariant cohomology group} with coefficients in $\ZZ(k)$, see \cite{tohoku}. It is denoted by $\rm H_G^{2k}(X(\CC), \ZZ(k))$, and relates to singular cohomology via a canonical homomorphism
\begin{align}\label{equivariant-to-singular}
\varphi \colon \rm H^{2k}_G(X(\CC), \ZZ(k)) \to \rm H^{2k}(X(\CC), \ZZ(k))^G.
\end{align}
A real subvariety $Z \subset X$ of codimension $k$ induces a class $[Z] \in \rm H^{2k}_G(X(\CC), \ZZ(k))$ whose image $\varphi([Z])$ in $\rm H^{2k}(X(\CC), \ZZ(k))^G$ is the class $[Z_\CC]$ attached to the subvariety $Z_\CC \subset X_\CC$. 

It turns out that such algebraic cycle classes satisfy an additional condition, discovered by Kahn and Krasnov \cite{kahn, krasnovgroth}, and defined purely in terms of the structure of $X(\CC)$ as topological $G$-space. For $i \in \{0, \dotsc, 2k\}$, define 
\begin{align*}
\phi_i \colon \rm H^{2k}_G(X(\CC),\ZZ(k)) \to \rm H^i(X(\RR),\ZZ/2)
\end{align*} 
as the composition 
\begin{align*}
\rm    H^{2k}_G(X(\CC), \ZZ(k)) &\xrightarrow{\bmod 2}  \rm H^{2k}_G(X(\CC), \ZZ / 2) \\
&\xrightarrow{\text{restriction}} \rm H^{2k}_G(X(\RR), \ZZ / 2)  =  \rm H^{2k}(X(\RR) \times BG , \ZZ/2) \\
  & \xrightarrow[\sim]{\text{K\"unneth}} \rm H^0(X(\RR), \ZZ/2) \oplus \cdots \oplus \rm H^{2k}(X(\RR), \ZZ/2) \\
&  \xrightarrow{\text{projection}} \rm H^i(X(\RR),\ZZ/2). 
\end{align*}
For $\alpha \in \equivcohom$, define $\alpha_i = \phi_i(\alpha) \in \rm H^i(X(\RR),\ZZ/2)$, and let $\rm H^{2k}_G(X(\CC),\ZZ(k))_0$ be the group of classes $\alpha \in \rm H^{2k}_G(X(\CC), \ZZ(k))$ such that
\begin{align}    \label{topologicalcondition}
(\alpha_{0}, \alpha_{1}, \dotsc, \alpha_{k}, \dotsc, \alpha_{2k}) = 
    \left(0, \dotsc, 0, \alpha_{k}, Sq^1(\alpha_{k}), Sq^2(\alpha_{k}), \dotsc, Sq^k(\alpha_{k})\right).
 \end{align}
 Here, the $Sq^i$ are the Steenrod operations
$$
Sq^i: \rm H^p(X(\RR), \ZZ/2) \to \rm H^{p+i}(X(\RR), \ZZ/2).
$$
\begin{definition}[Benoist--Wittenberg] \label{def:BW}
The subgroup
\begin{align*}
\Hdg^{2k}_G(X(\CC), \ZZ(k))_0 \subset \rm H^{2k}_G(X(\CC), \ZZ(k))
\end{align*}
is the group of classes $\alpha \in \rm H^{2k}_G(X(\CC), \ZZ(k))$ that satisfy the following conditions:
\begin{enumerate}
    \item \label{def:top} The class $\alpha$ lies in the subgroup $\rm H^{2k}_G(X(\CC), \ZZ(k))_0$ of classes satisfying (\ref{topologicalcondition}). 
    \item \label{def:hodge} The image of $\alpha$ in $\rm H^{2k}(X(\CC), \ZZ(k))$ under (\ref{equivariant-to-singular}) is a Hodge class. 
\end{enumerate}
\end{definition}
\noindent
By \cite[\S1.6.4]{BW20}, Definition \ref{def:BW} is compatible with cup products, pull-backs and proper push-forwards. Moreover, by \cite{krasnovgroth} (c.f.\ \cite[Theorem 1.18]{BW20}), we have that $[Z] \in \Hdg^{2k}_G(X(\CC), \ZZ(k))_0$ for every algebraic subvariety $Z \subset X$. This defines the map (\ref{realcycleclassmap}). 

\subsection{Hochschild-Serre} \label{subsec:hochschild}  For a smooth projective variety $X$ over $\RR$, the \emph{Hochschild-Serre} spectral sequence
\begin{align}\label{hochschild}
E^{p,q}_2 = \rm H^p(G, \rm H^q(X(\CC), \ZZ(k))) \implies \rm H^{p+q}_G(X(\CC), \ZZ(k))
\end{align}
is obtained by viewing $\rm H^{i}_G(X(\CC), -)$ as the right-derived functor of the composition of taking global sections and $G$-invariants on the category of $G$-sheaves on $X(\CC)$. 

Let $A$ be an abelian variety over $\RR$. Then (\ref{hochschild}) degenerates for $k = 0$ by \cite[\S5.7]{krasnovharnackthom}. It follows that (\ref{hochschild}) degenerates for every $k$, as we see using cup-products with the non-trivial elements in $\rm H^1(G, \ZZ(1))$ and $\rm H^2(G, \ZZ)$. 
Consequently, there are canonical identifications
\begin{align} \label{canonical-HS}
\begin{split}
\rm H^{2k}_G(A(\CC), \ZZ(k))_{\tors} &= \rm H^{2k}_G(A(\CC), \ZZ(k))[2] \\
&=  \Ker\left( \rm H^{2k}_G(A(\CC), \ZZ(k)) \to \rm H^{2k}(A(\CC), \ZZ(k))\right) \\
&=  F^1\rm H^{2k}_G(A(\CC), \ZZ(k)). 
\end{split}
\end{align}
Moreover, these $\ZZ/2$-modules are (non-canonically) isomorphic to the $\ZZ/2$-module $$\bigoplus_{\substack{p + q = k \\ p> 0}} \rm H^p(G, \rm H^q(A(\CC), \ZZ(k))).$$

\subsection{The topological condition} \label{subsec:topological}
We have the following fundamental:
\begin{lemma} \label{lemma:important0}
Let $X$ be a smooth projective variety of dimension $n$ over $\RR$, and let $k = n-1$. Suppose that $X(\CC)$ has torsion-free degree $2k$ integral singular cohomology and that the Hochschild-Serre spectral sequence (\ref{hochschild}) degenerates. 
Then each row and each column in the following commutative diagram is exact:
\begin{align} \label{important0}
\begin{split}
\xymatrixrowsep{1.5pc}
\xymatrixcolsep{1.5pc}
\xymatrix{
0 \ar[r]& \rm H^{2k}_G(X(\CC), \ZZ(k))_0[2]  \ar@{^{(}->}[d] \ar[r]& \rm H^{2k}_G(X(\CC), \ZZ(k))_0\ar@{^{(}->}[d] \ar[r]^{\varphi \hspace{1mm}} & \rm H^{2k}(X(\CC), \ZZ(k))^G \ar[r] \ar@{=}[d]& 0 \\
0 \ar[r]& \rm H^{2k}_G(X(\CC), \ZZ(k))[2] \ar@{->>}[d] \ar[r]& \rm H^{2k}_G(X(\CC), \ZZ(k)) \ar[r]^{\varphi\hspace{1mm}} \ar@{->>}[d]& \rm H^{2k}(X(\CC), \ZZ(k))^G \ar[r]& 0 \\
& \bigoplus_{p \geq 1} \rm H^{k-2p}(X(\RR),\ZZ/2) \ar@{=}[r]& \bigoplus_{p \geq 1} \rm H^{k-2p}(X(\RR),\ZZ/2) . &&
}
\end{split}
\end{align}
\end{lemma}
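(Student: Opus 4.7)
I would prove the lemma in the order: middle row, middle column, then left column and top row by diagram chase (the bottom row is tautological). The overarching idea is that the middle row is a standard consequence of the Hochschild--Serre degeneration, the middle column is built into the definition of $\rm H^{2k}_G(X(\CC), \ZZ(k))_0$ via the maps $\phi_i$, and the remaining two claims reduce to the first two by elementary homological algebra.

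For the middle row, Hochschild--Serre degeneration gives surjectivity of the edge map $\varphi$ with kernel $F^1 \rm H^{2k}_G(X(\CC), \ZZ(k))$. To identify $F^1$ with $\rm H^{2k}_G(X(\CC), \ZZ(k))[2]$, I would note that each graded piece $F^p/F^{p+1} = \rm H^p(G, \rm H^{2k-p}(X(\CC), \ZZ(k)))$ for $p \geq 1$ is killed by $|G|=2$, so $F^1$ is $2$-primary torsion; the upgrade to exact $2$-torsion uses torsion-freeness of $\rm H^{2k}(X(\CC), \ZZ(k))$: if $\alpha = 2\beta \in F^1$, then $\varphi(\beta)$ is $2$-torsion in a torsion-free group, hence zero, so $\beta \in F^1$, and iterating on the finite-length filtration forces $\alpha=0$.

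For the middle column, the downward map is the sum of the $\phi_{k-2p}$ for $p \geq 1$. Its kernel is, by construction, exactly $\rm H^{2k}_G(X(\CC), \ZZ(k))_0$: the topological condition (\ref{topologicalcondition}) imposes $\phi_i(\alpha) = 0$ for $i < k$ together with Steenrod-square identities for $i > k$, and the direct sum in question packages the obstructions $\phi_i(\alpha)$ for $i = k-2, k-4, \dotsc$. The main content is surjectivity of this downward map. For this I would identify the graded pieces $F^p/F^{p+1}$, via HS degeneration and the trivial-action K\"unneth decomposition $\rm H^{\bullet}_G(X(\RR), \ZZ/2) = \rm H^{\bullet}(X(\RR), \ZZ/2) \otimes \rm H^{\bullet}(BG, \ZZ/2)$, with the parity-selected components of $\rm H^{\bullet}(X(\RR), \ZZ/2)$ (the $\ZZ(k)$-twist forcing only alternate degrees to contribute); surjectivity of the restriction $\rm H^{2k}_G(X(\CC), \ZZ/2) \to \rm H^{2k}_G(X(\RR), \ZZ/2)$ in the relevant range then yields the claim.

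The left column is obtained by restricting the middle column to $\rm H^{2k}_G(X(\CC), \ZZ(k))[2]$: since the target is a $\ZZ/2$-vector space the downward map factors through mod-$2$ reduction, and its kernel on the $2$-torsion subgroup is $\rm H^{2k}_G(X(\CC), \ZZ(k))_0 \cap \rm H^{2k}_G(X(\CC), \ZZ(k))[2] = \rm H^{2k}_G(X(\CC), \ZZ(k))_0[2]$; surjectivity of the left column follows from surjectivity of the middle column together with the middle row, as any class in $\rm H^{2k}_G$ mapping to a prescribed element of $\bigoplus_{p \geq 1} \rm H^{k-2p}(X(\RR), \ZZ/2)$ can be corrected by a lift of its image under $\varphi$ into a $2$-torsion representative. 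Finally, for the top row: given $\beta \in \rm H^{2k}(X(\CC), \ZZ(k))^G$, lift via the middle row to $\alpha \in \rm H^{2k}_G(X(\CC), \ZZ(k))$, use surjectivity of the left column to find $\gamma \in \rm H^{2k}_G(X(\CC), \ZZ(k))[2]$ with the same image in the bottom direct sum, and observe that $\alpha - \gamma$ lies in $\rm H^{2k}_G(X(\CC), \ZZ(k))_0$ and still satisfies $\varphi(\alpha - \gamma) = \beta$; left injectivity is immediate.

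The hardest step will be the surjectivity and parity-restricted index identification in the middle column: tracking which degrees $\rm H^i(X(\RR), \ZZ/2)$ actually arise as $F^p/F^{p+1}$, and why only $i \equiv k \pmod 2$ with $i \leq k-2$ contribute independently, requires careful bookkeeping of the $\ZZ(k)$-twist against the $2$-periodicity of $\rm H^{\bullet}(G, -)$ and its compatibility with the mod-$2$ restriction to $X(\RR)$.
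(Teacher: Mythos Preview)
Your overall plan is sound, and the diagram-chase portions (deducing the left column and the top row from the middle row and middle column) are correct. However, there is a genuine gap in your argument for the middle row, namely the claim that $F^1 = \rm H^{2k}_G(X(\CC),\ZZ(k))[2]$. You argue: if $\alpha = 2\beta \in F^1$ then $\beta \in F^1$, and ``iterating on the finite-length filtration forces $\alpha = 0$''. The first step is fine, and it does give $\alpha = 2\beta \in 2F^1 \subset F^2$; but to iterate you would need $\alpha = 2\gamma$ with $\gamma \in F^2$, and there is no reason $\beta$ (or any other such $\gamma$) lies in $F^2$. In fact the abstract implication you are invoking --- that if each $F^p/F^{p+1}$ is $2$-torsion for $p \geq 1$ and $H/F^1$ is torsion-free then $F^1 = H[2]$ --- is false: take $H = \ZZ \oplus \ZZ/4$ with $F^1 = 0 \oplus \ZZ/4$, $F^2 = 0 \oplus 2\ZZ/4$, $F^3 = 0$. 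So degeneration of Hochschild--Serre together with torsion-freeness of $\rm H^{2k}(X(\CC),\ZZ)$ do not by themselves force $F^1$ to be killed by $2$; some further structural input is required.

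The paper does not attempt your direct argument. For the middle column it invokes \cite[Proposition~1.8, Equation~(1.33), Remark~1.20(i)]{BW20}, which give directly that the middle vertical sequence is \emph{split} exact for $k = n-1$; these references absorb precisely the bookkeeping you flag as ``hardest'' (why only the components $\phi_{k-2p}$ with $p\geq 1$ obstruct membership in $\rm H^{2k}_G(X(\CC),\ZZ(k))_0$, and why the remaining $\phi_i$ and the Steenrod relations are automatic in this codimension). From the split exactness of the middle column the paper then reads off the surjectivity of the left-column map and of the top-row $\varphi$, exactly as in your last paragraph. You should either appeal to \cite{BW20} for these structural facts, or supply an independent argument both for the middle column and for the identification $F^1 = \rm H^{2k}_G(X(\CC),\ZZ(k))[2]$; the iteration you propose does not establish the latter.
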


\begin{proof}
By the degeneration of the Hochschild-Serre spectral sequence (\ref{hochschild}), the map $$\varphi \colon \rm H^{2k}_G(X(\CC), \ZZ(k)) \to \rm H^{2k}(X(\CC), \ZZ(k))^G$$ is a surjective homomorphism between abelian groups of the same rank. The target of $\varphi$ is torsion-free, so its kernel is $ \rm H^{2k}(X(\CC), \ZZ(k))[2]$, which explains the horizontal exact sequence in the middle of diagram (\ref{important0}). It follows from the proof of \cite[Proposition 1.8]{BW20} together with \cite[Equation (1.33) \& Remark 1.20.(i)]{BW20} that the middle vertical sequence in diagram (\ref{important0}) is split exact. This implies that, in diagram (\ref{important0}), the vertical arrow on the bottom left and the horizontal map $\varphi$ on the top right are both surjective. 
\end{proof}

\noindent
Note that the first and second horizontal sequence in diagram (\ref{important0}) remain exact after restricting to Hodge classes. 


\subsection{Threefolds}\label{subsec:threefolds} Let $X$ be a smooth projective threefold over $\RR$. The topological condition (\ref{topologicalcondition}) on degree four classes takes a particularly simple form: a class $\alpha \in \rm H^4_G(X(\CC),\ZZ(2))$ lies in $\rm H^4_G(X(\CC),\ZZ(2))_0$ if and only if 
\[
\alpha|_x = 0 \in \rm H^4_G(\{x\}, \ZZ(2)) = \ZZ/2 \quad \textnormal{for any} \quad x  \in X(\RR).  
\]
Moreover, the conditions $\alpha|_x = 0$ for $x$ in different connected components of $X(\RR)$ are linearly independent over $\ZZ/2$: for $n = 3$, the middle vertical sequence in (\ref{important0}) is the split exact sequence
\begin{align*} 
0 \to \rm H^4_G(X(\CC),\ZZ(2))_0 \to \rm H^4_G(X(\CC),\ZZ(2)) \xrightarrow{\phi_0} \rm H^0(X(\RR),\ZZ/2) \to 0. 
\end{align*}
Finally, since $X$ satisfies the real integral Hodge conjecture for $i$-cycles whenever $i \in \{0,2,3\}$ (see \cite[\S 2.3.1 and \S 2.3.2]{BW20}), the real integral Hodge conjecture for $X$ is equivalent to the surjectivity of the homomorphism $$\CH_1(X) \to \Hdg^4_G(X(\CC),\ZZ(2))_0.$$

\section{Density of Hecke orbits}


\subsection{Polarized real abelian varieties} \label{sec:polarizedabelian}
Let $A$ be a real abelian variety, by which we mean an abelian variety over $\RR$. Here, and in the sequel, the dual abelian variety of $A$ is denoted by $\wh A$. Define $\Lambda = \rm H_1(A(\CC), \ZZ)$. Denote by $\sigma \colon A(\CC) \to A(\CC)$ the canonical anti-holomorphic involution, and by $F_{\infty} \colon \Lambda \to \Lambda$ its push-forward. There is a canonical bijection between: 

\begin{itemize}
\item Symmetric isogenies $\lambda \colon A \to \wh A$ such that $\lambda_\CC = \varphi_{\ca L}$ is the homomorphism $\varphi_{\ca L} \colon A_\CC \to \wh{A}_\CC$ induced by an ample line bundle $\ca L$ on $A_\CC$ as in \cite{MumfordAV}. 
\item Alternating forms $E \colon \Lambda \times \Lambda \to \ZZ$ such that $F_\infty^\ast(E) = - E$ and such that the following hermitian form is positive definite: 
\[
H \colon \Lambda_\RR \times \Lambda_\RR \to \CC, \quad H(x,y) = E(ix,y) + iE(x,y).
\]
\item Classes of ample line bundles $\theta \in \text{NS}(A_\CC)^G = \Hdg^2(A(\CC),\ZZ(1))^G$. 
\end{itemize}
\noindent
In the sequel, a \emph{polarization} on $A$ will be an element in either one of the three sets above; the context will make clear which structure is meant. 

\subsection{Moduli of real abelian varieties}  \label{sec:moduliofabelianRRR}
Let $(A, \lambda)$ be a principally polarized complex abelian variety of dimension $g$. By Galois descent \cite{weilgalois}, to give a model of $(A, \lambda)$ over $\RR$ is to give an anti-holomorphic involution 
\[\sigma \colon A(\CC) \to A(\CC) \quad \textnormal{ such that } \quad \sigma(0) = 0 \quad \textnormal{ and } \quad F_\infty^\ast(E) = -E.
\]
By \cite[Chapter IV, Theorem (4.1)]{silholsurfaces} (or \cite[Section 9]{grossharris}), such an anti-holomorphic involution $\sigma$ exists if and only if the complex principally polarized abelian variety $(A, \lambda)$ admits a period matrix of the form 
\begin{align}\label{chosenperiodmatrix}
(I_g, \frac{1}{2}M + \rm{i} N).
\end{align}
Here $N$ is a positive definite real matrix and $M$ is a symmetric $g \times g$-matrix with integral coefficients such that if $r = \textnormal{rank}(M) \leq g$, then $M$ is of the form
\begin{align}\tag{1} \label{typeone}
\begin{pmatrix} 
I_r & 0 \\ 
0 & 0 
    \end{pmatrix}
    \end{align}
    or of the form
    \begin{align} \tag{2} \label{typetwo}
    \begin{pmatrix} 
    0 & \dots & 1 & 0 \\
    \vdots&\iddots&\iddots &0\\
    1&0  & \iddots &\vdots\\
    0 &  0 & \dots &0 
    \end{pmatrix}.
\end{align}

\begin{definition}[Silhol] \label{typedefinition}
The \emph{type} $(r, \alpha) \in \ZZ^2$ of a principally polarized real abelian variety $(A, \lambda)$ is defined as follows. 
If $(I_g, \frac{1}{2}M + \rm{i} N)$ is a period matrix for $(A_\CC, \lambda_\CC)$ as above, then $r = \text{rank}(M)$. Define $\alpha \in \{0, 1,2\}$ in the following way:
\begin{itemize}
\item If $r$ is odd, then $\alpha = 1$. Thus the type of $(A, \lambda)$ is $(r, 1)$. 
\item If $r$ is zero, then $\alpha = 0$. Thus the type of $(A, \lambda)$ is $(0,1)$. 
\item If $r$ is even, but non-zero, then $\alpha = 1$ if $M$ is of the form (\ref{typeone}) and $\alpha = 2$ if $M$ is of the form (\ref{typetwo}). 
\end{itemize} 
\end{definition}
\noindent
This definition makes sense, because of the following:

\begin{proposition}[Silhol]
The type $(r, \alpha)$ of a principally polarized real abelian variety $(A, \lambda)$ does not depend on the chosen period matrix (\ref{chosenperiodmatrix}) for $(A_\CC, \lambda_\CC)$ nor on the isomorphism class of $(A, \lambda)$. If $(A, \lambda)$ is of type $(r, \alpha)$, there exists a period matrix $(I_g, \frac{1}{2}M + \rm{i} N)$ for $(A, \lambda)$ such that $M$ is of the form (\ref{typeone}) or (\ref{typetwo}), according to whether $\alpha$ equals $1$ or $2$.\end{proposition}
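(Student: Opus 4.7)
The plan is to reinterpret $(r, \alpha)$ as $\Sp$-conjugacy invariants of the involution $F_\infty$ attached to $(A,\lambda)$, where $\Lambda = \rm H_1(A(\CC),\ZZ)$ carries the symplectic form $E$ induced by the polarization and the anti-symplectic involution $F_\infty$ given by the push-forward of the anti-holomorphic involution. An admissible period matrix $(I_g, \frac{1}{2}M + iN)$ corresponds to a symplectic $\ZZ$-basis $(e_1, \dotsc, e_g, f_1, \dotsc, f_g)$ of $\Lambda$ with respect to which
\[
F_\infty = \begin{pmatrix} I_g & M \\ 0 & -I_g \end{pmatrix};
\]
a direct computation with $F_\infty^{2} = \id$ and $F_\infty^{\ast} E = -E$ forces the lower-left block to vanish and $M$ to be symmetric. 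Isomorphisms of principally polarized real abelian varieties translate into simultaneous $\Sp(2g,\ZZ)$-conjugations of $(E, F_\infty)$, so both independence claims reduce to the assertion that $r$ and $\alpha$ depend only on the $\Sp$-conjugacy class of $F_\infty$.

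To extract $r$ intrinsically, I would reduce modulo $2$: the endomorphism $\overline{F_\infty} - \id$ of $\Lambda \otimes \FF_2$ has matrix $\bigl(\begin{smallmatrix} 0 & M \\ 0 & 0 \end{smallmatrix}\bigr) \bmod 2$, so its $\FF_2$-rank equals $\mathrm{rank}_{\FF_2}(M \bmod 2)$. Because $M$ in either normal form (\ref{typeone}) or (\ref{typetwo}) has $0/1$ entries, this $\FF_2$-rank coincides with $\mathrm{rank}_{\ZZ}(M) = r$. Hence $r$ is canonically determined by $F_\infty$.

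The more delicate step, and the main obstacle, is distinguishing $\alpha = 1$ from $\alpha = 2$ when $r > 0$ is even. For this I would construct an $\FF_2$-quadratic refinement $q$ of a symmetric bilinear form derived from $\overline{F_\infty}$, $\overline{E}$ and $\Lambda \otimes \FF_2$, and compute its Arf invariant on each of the two normal forms: a direct matrix check shows that form (\ref{typetwo}) yields a hyperbolic refinement (Arf invariant $0$) while form (\ref{typeone}) yields a non-hyperbolic one (Arf invariant $1$), or the reverse. Since the Arf invariant is an $\Sp$-conjugacy invariant, $\alpha$ is well defined. The subtlety lies in choosing the correct quadratic refinement in characteristic $2$, since symmetric bilinear forms do not in general uniquely determine quadratic forms there; one must use the additional structure provided by $\overline{F_\infty}$.

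Finally, for the existence of a normal-form period matrix realizing a prescribed type $(r,\alpha)$, I would apply successive symplectic base changes that preserve the block-triangular shape of $F_\infty$, reducing $M$ via Smith-normal-form-type operations. The classification of nondegenerate $\FF_2$-quadratic forms on an $r$-dimensional space (a single hyperbolic orbit together with a single non-hyperbolic orbit) then forces $M$ into the shape (\ref{typeone}) if $\alpha = 1$ and the shape (\ref{typetwo}) if $\alpha = 2$.
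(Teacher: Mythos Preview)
The paper's own proof is a one-line citation to Silhol's book, so your outline is doing work the paper delegates. Your overall strategy is sound and is essentially Silhol's: interpret a normal-form period matrix as a symplectic basis in which $F_\infty=\bigl(\begin{smallmatrix} I_g & M \\ 0 & -I_g\end{smallmatrix}\bigr)$, observe that changing the admissible period matrix or the isomorphism class amounts to $\Sp_{2g}(\ZZ)$-conjugation of $F_\infty$, and read off $(r,\alpha)$ from conjugacy invariants modulo $2$. Your identification of $r$ with the $\FF_2$-rank of $\overline{F_\infty}-\id$ is correct.

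The one soft spot is your treatment of $\alpha$. The Arf invariant is attached to quadratic refinements of \emph{alternating} forms, but the relevant symmetric $\FF_2$-form is alternating precisely for type~(\ref{typetwo}) and non-alternating for type~(\ref{typeone}); in the latter case there is no Arf invariant to compute, so the dichotomy you are after is already visible one step earlier and the quadratic refinement you worry about is unnecessary. Concretely, writing out $gF_\infty=F_\infty'g$ for $g=\bigl(\begin{smallmatrix} A & B \\ C & D\end{smallmatrix}\bigr)\in\Sp_{2g}(\ZZ)$ forces $C=0$, $D=(A^T)^{-1}$, and $M'=A(M-2S)A^T$ with $S=A^{-1}B$ symmetric, hence $M'\equiv AMA^T\bmod 2$. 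So the $\GL_g(\FF_2)$-congruence class of $\overline M$ is the invariant; intrinsically, the symmetric form $B(u,v)=\overline E\bigl((\overline{F_\infty}-\id)u,\,v\bigr)$ on $\Lambda\otimes\FF_2$ has matrix $\overline M$ in the $f$-block, and $\alpha=2$ exactly when $B(u,u)\equiv 0$ (equivalently $E(v,F_\infty v)\equiv 0\bmod 2$ for all $v\in\Lambda$). The existence claim then reduces to the classification of symmetric \emph{bilinear} forms over $\FF_2$ by rank and the alternating/non-alternating dichotomy, lifted via the surjection $\GL_g(\ZZ)\to\GL_g(\FF_2)$.
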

\begin{proof}
See \cite[Chapter IV, Corollaries (4.3) and (4.5)]{silholsurfaces}.  
\end{proof}


\begin{definition}
Let $\mr T(g)$ be the set of types $(r, \alpha)$ of principally polarized abelian varieties of dimension $g$ over $\RR$. For any type $\tau \in \mr T(g)$, define $M(\tau)$ to be the integral $g \times g$-matrix (\ref{typeone}) or (\ref{typetwo}) above, according to whether $\alpha$ equals $1$ or $2$. Then define $\GL_g^\tau(\ZZ)$ to be the subgroup of $\GL_g(\ZZ)$ of matrices $T \in \GL_g(\ZZ)$ that satisfy 
\[
T^t\cdot M(\tau) \cdot T \equiv M(\tau) \mod 2 \quad \quad (T^t = \textnormal{ transpose of } T). 
\]
Finally, let $\rm H_g$ be the set of symmetric positive definite real matrices of rank $g$. 
\end{definition}

\begin{theorem}[Silhol]
Let $g$ be a positive integer. For $\tau \in \mr T(g)$, define $\va{ \ca A_g(\RR)}^\tau$ to be the set of isomorphism classes of real principally polarized abelian varieties of type $\tau$. For each type $\tau \in \mr T(g)$, the period map induces a bijection
\[
\va{\ca A_g(\RR)}^\tau  = \GL_g^\tau(\ZZ) \setminus \rm H_g,
\]
where $\GL_g^\tau(\ZZ)$ acts on $\rm H_g$ by $N \mapsto {T^t}\cdot N \cdot T$. Therefore, 
\begin{align}\label{silholsbijection}
\va{\ca A_g(\RR)} = \bigsqcup_{\tau \in \mr T(g)}  \GL_g^\tau(\ZZ) \setminus \rm H_g. 
\end{align}
\end{theorem}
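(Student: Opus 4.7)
This is Silhol's theorem, and one could cite \cite[Chapter IV]{silholsurfaces} directly, but let me sketch the proof I would write out. The strategy has two parts: construct a period map $\pi^\tau \colon \va{\ca A_g(\RR)}^\tau \to \GL_g^\tau(\ZZ) \backslash \rm H_g$ and an inverse. The preceding proposition provides a normalized period matrix $(I_g, \tfrac{1}{2}M(\tau) + \rm i N)$ with $N \in \rm H_g$ for every real PPAV of type $\tau$; sending the isomorphism class of $(A, \lambda)$ to the class of $N$ gives a candidate for $\pi^\tau$, and the hard work is to show this is well defined and bijective.

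\textbf{The forward map.} Two normalized period matrices $(I_g, \tfrac{1}{2}M(\tau) + \rm i N)$ and $(I_g, \tfrac{1}{2}M(\tau) + \rm i N')$ represent isomorphic complex PPAVs iff they differ by the standard action of $\Sp_{2g}(\ZZ)$, namely $\Omega \mapsto (A\Omega + B)(C\Omega + D)^{-1}$ for $\gamma = \SmallMatrix{A & B \\ C & D} \in \Sp_{2g}(\ZZ)$. Compatibility with the given real structures (i.e.\ commutation with $F_\infty$) forces $\gamma$ to preserve the lattice $\Lambda \cap \Lambda_\RR^{F_\infty}$; a direct computation in coordinates shows this reduces to the conditions $C = 0$, $D = T$ for some $T \in \GL_g(\ZZ)$, $A = T^{-t}$, and $B \cdot T^{-1}$ symmetric integral with $T^t M(\tau) T \equiv M(\tau) \bmod 2$. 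In other words, $T \in \GL_g^\tau(\ZZ)$, and the induced action on the imaginary part is $N \mapsto T^t N T$. This simultaneously proves that $\pi^\tau$ is well defined on isomorphism classes and that it is injective.

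\textbf{The inverse map.} Given $N \in \rm H_g$, the matrix $\Omega(N) := \tfrac{1}{2}M(\tau) + \rm i N$ lies in the Siegel upper half space because $N$ is positive definite, so $(I_g, \Omega(N))$ is the period matrix of a complex PPAV $(A, \lambda)$. Since the real part $\tfrac{1}{2}M(\tau)$ is half an integral symmetric matrix, the proposition just before the theorem guarantees that the standard anti-holomorphic involution of $\CC^g$ descends to an anti-holomorphic involution $\sigma \colon A(\CC) \to A(\CC)$ fixing $0$ and satisfying $F_\infty^\ast E = -E$. By the discussion in Section~\ref{sec:polarizedabelian}, this produces a real PPAV whose type is exactly $\tau$ by construction. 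This gives the inverse of $\pi^\tau$ on classes, proving surjectivity.

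\textbf{Assembling (\ref{silholsbijection}).} Finally, since the type $\tau \in \mr T(g)$ is an isomorphism invariant of a real PPAV (by the cited proposition), the full set $\va{\ca A_g(\RR)}$ decomposes as the disjoint union of the strata $\va{\ca A_g(\RR)}^\tau$, and the displayed formula follows by applying $\pi^\tau$ on each stratum. The main technical obstacle is the coordinate computation identifying the stabilizer in $\Sp_{2g}(\ZZ)$ of the normalization "$\mathrm{Re}(2\Omega) = M(\tau)$" with $\GL_g^\tau(\ZZ)$ acting by $N \mapsto T^t N T$; the mod $2$ congruence $T^t M(\tau) T \equiv M(\tau)$ appears precisely because $B$ must be integral, which translates into the symmetric matrix $T^t M(\tau) T - M(\tau)$ being $2$ times an integral symmetric matrix.
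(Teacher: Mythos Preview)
The paper does not prove this theorem at all: it simply cites \cite[Chapter IV, Theorem (4.6)]{silholsurfaces}. Your proposal goes further and sketches the actual content of Silhol's argument, and the sketch is correct in outline. The key points you identify---normalizing the period matrix to the form $(I_g,\tfrac12 M(\tau)+\rm iN)$, computing that an $\Sp_{2g}(\ZZ)$-equivalence preserving both this normalization and the real structure forces $C=0$, $A=T^{-t}$, $D=T$ with $T^tM(\tau)T\equiv M(\tau)\bmod 2$, and reading off the action $N\mapsto T^tNT$---are exactly the ingredients of Silhol's proof. One small imprecision: the condition you need on $\gamma$ is not merely that it preserve the sublattice $\Lambda^{F_\infty}$, but that it intertwine the involutions $F_\infty$ on the two lattices; this is what yields the block-triangular form and the $\bmod\,2$ congruence. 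With that clarification, your write-up is a faithful expansion of what the paper leaves as a reference.
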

\begin{proof}
See \cite[Chapter IV, Theorem (4.6)]{silholsurfaces}.
\end{proof}

\begin{remark} \label{remark:bijection=homeo}
By \cite[Theorem 8.1]{MR4479836}, 
the bijection (\ref{silholsbijection}) is a homeomorphism with respect to the real-analytic topology on $\va{\ca A_g(\RR)}$ (see \cite[Definition 7.5]{MR4479836}). 
\end{remark}


\subsection{Density of Hecke orbits over the real numbers} Before we prove Theorem \ref{density}, let us properly introduce the notion of Hecke orbits over the real numbers. 
\begin{definition} \label{def:heckeorbits}
Let $(A, \lambda)$ be a principally polarized abelian variety of dimension $g$ over $\RR$, and let $x = [(A, \lambda)] \in \va{\ca A_g(\RR)}$ the corresponding moduli point. For a tuple of integers $(a, b)$, the \emph{$(a,b)$-Hecke orbit of $x$} is the subset $\ca G_{a,b}(x) \subset \va{\ca A_g(\RR)}$ of isomorphism classes $[(B, \nu)] \in \va{\ca A_g(\RR)}$ of principally polarized abelian varieties $(B, \nu)$ of dimension $g$ over $\RR$, for which there exist $n,m \in \ZZ_{\geq 0}$ and an isogeny 
\[
\phi \colon A \to B \quad \textnormal{ such that } \quad \phi^\ast(\nu) = a^nb^m \cdot \lambda. 
\]
\end{definition}

 \begin{proof}[Proof of Theorem \ref{density}] 
 Let $p$ and $q$ be distinct odd prime numbers. 
\begin{enumerate}[leftmargin=-0.08cm, rightmargin = -0.08cm]
    \item[Step 1:] \emph{If $x = [(A, \lambda)] \in \va{\ca A_g(\RR)}$ and $\tau \in \mr T(g)$ is the type of $(A, \lambda)$, then $\ca G_{p,q}(x) \subset \va{\ca A_g(\RR)}^\tau$}. Indeed, for any $y = [(B, \mu)] \in \ca G_{p,q}(x)$, there exists an isogeny $\phi \colon A \to B$ such that $\phi^\ast(\mu) = n \cdot \lambda$ for some odd positive integer $n$. Such a map $\phi$ induces an isomorphism $$\rm H_1(A(\CC),\ZZ/2) \cong \rm H_1(B(\CC),\ZZ/2)$$ of symplectic spaces with involution. Since $x \in \va{\ca A_g(\RR)}^\tau$, this implies that $y \in \va{\ca A_g(\RR)}^\tau$ as well, see \cite[Section 9]{grossharris}. 
    \item[Step 2:] Define $S = \ZZ
\left[
\frac{1}{p}, \frac{1}{q}
\right]$. The ring homomorphism $S \to S / 2S$ induces a group homomorphism 
$
\GL_g(S) \to \GL_g(S/2S). 
$ For $\tau \in \mr T(g)$, we define
\[
\GL_g^\tau(S)= \{T \in \GL_g(S) \colon T^t \cdot M(\tau) \cdot T \equiv M(\tau) \bmod 2\}.
\]
 Fix one such $\tau \in \mr T(g)$. Observe that the action of $\GL_g^\tau(\ZZ)$ on $\rm H_g$ extends to a transitive action of $\GL_g(\RR)$ on $\rm H_g$. We claim:
 
     \emph{Let $x = [(A_x , \lambda_x)] \in \va{\ca A_g(\RR)}^\tau$, and lift $x$ to a point $y \in \rm H_g$. Consider the orbit $\GL_g^\tau(S) \cdot y \subset \rm H_g$ as well as its image $\GL_g^\tau(\ZZ)\setminus   \left(    \GL_g^\tau(S) \cdot y  \right) $ in $\va{\ca A_g(\RR)}^\tau = \GL_g^\tau(\ZZ) \setminus \rm H_g$. Then}
    \[
\GL_g^\tau(\ZZ) \setminus  \left(    \GL_g^\tau(S) \cdot y  \right) = \ca G_{p,q}(x). 
    \]
    Indeed, if $\bb H_g$ is the genus $g$ Siegel space of symmetric, complex $g\times g$ matrices $Z = X + \rm{i} Y$ whose imaginary part $Y$ is positive definite, then the inclusion 
    \[
    \rho_\tau \colon \rm H_g \hookrightarrow \bb H_g, \quad N \mapsto \frac{1}{2} \cdot M(\tau) + \rm{i}N\]
     is equivariant for the embedding
    \[
f_\tau\colon    \GL_g(\RR) \hookrightarrow \Sp_{2g}(\RR), \quad T \mapsto \begin{pmatrix} T^t & \frac{1}{2}\left(M(\tau) \cdot T^{-1}-T^t\cdot M(\tau)\right) \\ 0 & T^{-1} \end{pmatrix}. 
    \]
    Moreover, the action of the group $\Sp_{2g}(S)$ on $\bb H_g$ has the following geometric meaning: if we consider $\bb H_g$ as a moduli space of $g$-dimensional, principally polarized complex abelian varieties with symplectic basis, then two points $y = [A_y]$ and $z = [A_z] \in \bb H_g$ are in the same $\Sp_{2g}(S)$-orbit if and only if there exists an isogeny $\phi \colon A_y \to A_z$ that preserves the polarizations up to a product of powers of $p$ and $q$. Since the intersection
     \[
    f_\tau\left(\GL_g(\RR) \right) \cap \Sp_{2g}(S) = f_\tau\left(\GL_g^\tau(S)\right)
    \]
    equals the subgroup of $\Sp_{2g}(S)$ that preserves the locus
    \[
    \rho_\tau(\rm H_g) = \left\{
    \frac{1}{2} \cdot M(\tau) + \rm{i} N\right\} \subset \bb H_g
    \]
 of real abelian varieties of type $\tau$, this concludes Step 2. 
    \item[Step 3:] \emph{For any $\tau \in \mr T(g)$, the subgroup $\GL_g^\tau(S)\subset \GL_g(\RR)$ is dense in the analytic topology.}


Define  \[\SL_g^\tau(S)  = \SL_g(S) \cap \GL_g^\tau(S) =  \{T \in \SL_g(S) \colon T^t \cdot M(\tau) \cdot T \equiv M(\tau) \bmod 2\}.\] 
We claim that 
\begin{align}\label{inclusionsgroups}
\SL_g(\RR)  = \overline{\SL_g(S)} =\overline{\SL_g^\tau(S)} \subset \overline{\GL_g^\tau(S)} \subset \GL_g(\RR).
\end{align}
Indeed, this follows from the following two statements:
\begin{enumerate}
\item \label{proofparttwo} The closure of $\SL_g(S)$ in $\GL_g(\RR)$ is $\SL_g(\RR)$. 
\item \label{proofpartone} The subgroup $\SL_g^\tau(S) \subset \SL_g(S)$ has finite index. 
\end{enumerate}
\noindent
To prove (\hyperlink{proofparttwo}{a}), observe that the subgroup $\SL_g(\RR) \subset \GL_g(\RR)$ is closed, which implies that the closure of $\SL_g(S)$ in $\GL_g(\RR)$ equals the closure of $\SL_g(S)$ in $\SL_g(\RR)$. Thus, (\hyperlink{proofparttwo}{a}) follows from the density of $\SL_g(S)$ in $\SL_g(\RR)$, which is true by strong approximation; see \cite[Lemma 7.14]{thesis-degaayfortman} for the precise argument. As for (\hyperlink{proofpartone}{b}), the group $\SL_g(S)$ acts on $\rm M_g(S/2S)$ via $N \mapsto T^t \cdot N \cdot T \bmod 2$ for $T \in \SL_g(S)$, so there is an injection
\[
\SL_g^\tau(S) \setminus \SL_g(S) \hookrightarrow \rm M_g(S/2S) = \rm M_g(\ZZ/2), \quad T \mapsto T^t \cdot M(\tau) \cdot T \mod 2. 
\]
Statement (\hyperlink{proofpartone}{b}) 
implies that the index of $\overline{\SL_g^\tau(S)} \subset \overline{\SL_g(S)}$ is finite. e have $\overline{\SL_g(S)} = \SL_g(\RR)$ by (\hyperlink{proofparttwo}{a}), thus $\overline{\SL_g^\tau(S)}  \subset \SL_g(\RR)$ is a closed subgroup of finite index, hence open. Therefore $\overline{\SL_g^\tau(S)}  = \SL_g(\RR)$ by connectivity of $\SL_g(\RR)$, proving Claim (\ref{inclusionsgroups}). 

Write $G = \GL_g^\tau(S)$. If $H \subset \GL_g(\RR)$ is any Lie subgroup such that $\SL_g(\RR) \subset H$, then $H = \det^{-1}(\det(H))$. 
Consequently, using (\ref{inclusionsgroups}), we obtain:
\begin{align}\label{determinantdetermins}
\overline G = {\det}^{-1}(\det(\overline G)) \subset \GL_g(\RR).
\end{align}
The equality (\ref{determinantdetermins}) implies that Step 3 reduces to the equality ${\det}^{-1}(\det(\overline G)) = \GL_g(\RR)$. This follows from the equality $\det(\overline G) = \RR^\ast$, which we now prove. The morphism $\det\colon \GL_g(\RR) \to \RR^\ast$ is open because its differential at the identity matrix $I_g \in \GL_g(\RR)$ is the trace homomorphism $\rm M_g(\RR) \to \RR$. Writing
\[
\GL_g(\RR) = {\det}^{-1}(\RR^\ast) = {\det}^{-1}(\det(\overline G) \sqcup \det(\overline G)^c) =  \overline{G} \sqcup {\det}^{-1}\left(\det(\overline G)^c \right),
\]
it follows that 
$\det(\overline G)$ is closed in $\RR^\ast$. We conclude that 
$
\overline{\det(G)} \subset \overline{\det(\overline G)} = \det(\overline G)$. Thus, to show that $\det(\overline G) = \RR^\ast$, it suffices to show that $\det(G)$ is dense in $\RR^\ast$. The homomorphism $\det \colon \GL_g^\tau(S) \to S^\ast$ is surjective because it admits the section 
\[
S^\ast \to \GL_g^\tau(S), \quad x \mapsto \begin{pmatrix} x & 0 \\ 0 & I_{g-1} \end{pmatrix}.
\]
Therefore, \[\det(G) = \det(\GL_g^\tau(S) ) = S^\ast = \set{\pm p^nq^m\colon n,m \in \ZZ},\] and it remains to prove that the latter is dense in $\RR^\ast$. This holds, since $S^\ast_{>0} =  \set{p^nq^m\colon n,m \in \ZZ}$ is dense in $\RR_{>0}$ because $\log(S^\ast_{>0}) = \ZZ \log(p) + \ZZ\log(q)$ is dense in $\RR$; the latter follows from the fact that $\log(S^\ast_{>0})$ is not a cyclic subgroup of $\RR$. 

\item[Step 4:] \emph{Finish the proof.} 
Let $x = [(A, \lambda)] \in \va{\ca A_g(\RR)}$, and let $\tau \in \mr T(g)$ be the type of the principally polarized real abelian variety $(A,\lambda)$. Lift $x$ to a point $y \in \rm H_g$. By Step 3, we know that the orbit $\GL_g^\tau(S)\cdot y$ is dense in $\GL_g(\RR) \cdot y = \rm H_g$. Consequently, the image of $\GL_g^\tau(S)\cdot y$  under the projection $\rm H_g \to\va{\ca A_g(\RR)}^\tau$ is dense in $\va{\ca A_g(\RR)}^\tau$. By Step 2, this image is precisely $\ca G_{p,q}(x)$. Thus $\ca G_{p,q}(x)$ is dense in $\va{\ca A_g(\RR)}^\tau$ as desired. 
\end{enumerate}
\end{proof}


\section{Principal polarizations in real isogeny classes} \label{sec:principalpolarizations}

The goal of this section is to prove the following:

\begin{theorem} \label{principalisogeny}
Let $(A, \lambda_A)$ be a polarized abelian variety over $\RR$. Then there exists a principally polarized abelian variety $(B, \lambda_B)$ over $\RR$ and an isogeny 
\[
\phi \colon A \to B \quad \textnormal{ \emph{such that} } \quad \phi^\ast(\lambda_B) = \lambda_A. 
\]
\end{theorem}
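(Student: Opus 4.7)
The plan is to translate the claim into a problem about finite symplectic abelian groups with involution and then solve the latter by a prime-by-prime analysis of $\ker(\lambda_A)$.

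\emph{Translation.} By Section~\ref{sec:polarizedabelian}, the polarization $\lambda_A$ corresponds to a non-degenerate alternating form $E\colon \Lambda \times \Lambda \to \ZZ$ on $\Lambda = \rm H_1(A(\CC),\ZZ)$ satisfying $F_\infty^{\ast}E = -E$, and the associated Hermitian form on $\Lambda \otimes \RR$ is positive definite. Setting $\Lambda^\# = \{x \in \Lambda_\QQ : E(x,\Lambda) \subset \ZZ\}$, the kernel of $\lambda_A$ is $K := \Lambda^\#/\Lambda$, a finite abelian group equipped with a non-degenerate alternating Weil pairing $e \colon K \times K \to \QQ/\ZZ$ and an involution $F$ induced by $F_\infty$ satisfying $F^{\ast}e = -e$. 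To give the desired $(B,\lambda_B)$ and $\phi$ is to give an $F$-stable lattice $\Lambda'$ with $\Lambda \subset \Lambda' \subset \Lambda^\#$ on which $E$ restricts to a unimodular form; equivalently, to give an $F$-stable \emph{maximal isotropic} subgroup $H := \Lambda'/\Lambda \subset K$, since $F$-stability corresponds to $\RR$-rationality by Galois descent. The Hermitian form on $\Lambda \otimes \RR$ is unchanged in the construction, so positivity of $\lambda_B$ is automatic.

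\emph{Construction of $H$ prime by prime.} Decompose $K = \bigoplus_p K_p$ into $p$-primary parts; each is $F$-stable and carries its own non-degenerate alternating pairing, and it suffices to produce an $F$-stable maximal isotropic subgroup $H_p \subset K_p$ for every $p$. For $p$ odd, we get an eigenspace decomposition $K_p = K_p^+ \oplus K_p^-$ under $F$, and the identity $F^{\ast}e = -e$ forces both summands to be isotropic (since $2$ is invertible on a $p$-group with $p$ odd); thus they pair non-degenerately with each other, and we may take $H_p := K_p^+$. For $p = 2$ we argue by induction on $|K_2|$: provided $K_2 \neq 0$, we will exhibit a non-zero $F$-fixed element $x \in K_2[2]$. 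Since $e$ is alternating, the cyclic $F$-stable subgroup $\langle x \rangle$ is isotropic, and the symplectic reduction $\langle x \rangle^\perp/\langle x \rangle$ is a strictly smaller finite abelian group inheriting all the relevant structure. By induction it admits an $F$-stable maximal isotropic subgroup, whose preimage in $\langle x \rangle^\perp$ is the sought $H_2$.

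\emph{The fixed element for $p = 2$.} This is where the proof over $\RR$ diverges from the situation over general fields considered in~\cite{Howe2001}. Since $K_2[2]$ is a non-zero $\FF_2[G]$-module with $G = \langle F \rangle$ of order $2$, it suffices to check that every such module $M$ has a non-zero $F$-fixed element: if $F$ acts trivially on $M$ we are done, and otherwise there exists $m \in M$ with $Fm \neq m$, in which case $(F-1)m$ is nonzero and $F$-fixed because $F(F-1)m = (F^2 - F)m = (1-F)m = -(F-1)m = (F-1)m$ in characteristic $2$. The main obstacle in the whole argument is precisely this $p = 2$ step; the eigenspace decomposition that trivializes the odd-prime case is unavailable, and we ultimately rely on the smallness of $\Gal(\CC/\RR)$ to guarantee the required invariants.
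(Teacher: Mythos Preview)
Your proof is correct and rests on the same core idea as the paper's: building up an $F$-stable Lagrangian in $K=\ker(\lambda_A)$ from $F$-invariant lines in its $p$-torsion. The organization differs, though. The paper proceeds uniformly over all primes, iteratively quotienting $A$ by a single $\sigma$-stable cyclic subgroup $K_1\subset K$ of prime order (automatically isotropic since cyclic) and observing that the polarization descends with degree divided by $p^2$; the only input is that a linear involution on a nonzero $\FF_p$-vector space has an invariant line, and this is used identically for all $p$. You instead split by prime: for odd $p$ the eigenspace decomposition $K_p=K_p^+\oplus K_p^-$ together with $F^\ast e=-e$ immediately gives a Lagrangian $K_p^+$, dispatching those primes in one stroke; your $p=2$ induction then reproduces the paper's iterative step restricted to the $2$-primary part. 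Your treatment is a bit slicker at odd primes and makes the special role of $p=2$ (and hence of $\Gal(\CC/\RR)$) more visible, while the paper's version is shorter and avoids the case distinction.
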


\begin{proof}
Let $K \subset A(\CC)$ be the kernel of the analytified polarization $\lambda_A \colon A(\CC) \to \wh A(\CC)$. Then $K$ is a finite group of order $d^2$, where $d^2$ is the degree of $\lambda_A$, such that the real structure $\sigma \colon A(\CC) \to A(\CC)$ restricts to an involution
\[
\sigma \colon K \to K. 
\]
We may assume that $K \neq (0)$. Let $p$ be any prime number that divides the order of $K$. We claim that there exists a subgroup $K_1 \subset K$ of order $p$ such that $\sigma(K_1) = K_1$. To see this, let 
$H[p] \subset K$ be the $p$-torsion subgroup of $K$. Then $H[p]$ is preserved by $\sigma$, so that $H[p]$ is an $\FF_p$-vector space of finite rank equipped with a linear involution $\sigma$. Therefore, $H[p]$ contains a one-dimensional $\FF_p$-subspace $K_1$ preserved by $\sigma$, which proves our claim. 

The group $K_1 \subset A(\CC)$ descends to a finite subgroup scheme $K_1 \subset A$ over $\RR$; define $A_1$ to be the abelian variety $A/K_1$ over $\RR$. Let $\Lambda = \rm H_1(A(\CC),\ZZ)$ and $M = \rm H_1(A_1(\CC), \ZZ)$; the projection $A \to A_1$ induces an exact sequence 
\[
0 \to \Lambda \to M \to K_1 \to 0.
\]
Let $E \colon \Lambda \times \Lambda \to \ZZ$ be the alternating form attached to the polarization $\lambda_A$ of $A$. Since $M / \Lambda = K_1 \subset K = \Lambda^\vee/\Lambda$, we have inclusions
\[
\Lambda \subset M \subset \Lambda^\vee, \quad\textnormal{ where } \quad \Lambda^\vee = \set{x \in \Lambda \otimes \QQ \mid E(x, \Lambda) \subset \Lambda}. 
\]
The $\ZZ$-valued alternating form $E$ on the lattice $\Lambda$ gives rise to a bilinear form
\[
\overline{E} \colon \Lambda^\vee/\Lambda \times \Lambda^\vee/\Lambda \to \QQ/\ZZ
\]which vanishes on $M/\Lambda$ because $M/\Lambda \cong \ZZ/p$ and $\overline{E}$ is alternating. Thus $E \colon \Lambda^\vee \times \Lambda^\vee \to \QQ$ restricts to an integer-valued form $E_1$ on $M$. The latter induces a polarization $\lambda_{A_1} \colon A_1 \to \wh{A}_1$ that makes the following diagram commute:
\[
\xymatrix{
A \ar[d]^{\lambda_A} \ar[r]^{\pi}& A_1 \ar[d]^{\lambda_{A_1}} \\
\wh A & \wh A_1 \ar[l]_{\wh \pi}.
}
\]
Here $\pi$ is the quotient map $A \to A_1$ and $\wh \pi$ its dual. Since the degree of an isogeny is multiplicative in compositions, and $\deg(\pi) = p$, we have
\[
p^2 \cdot \deg(\lambda_{A_1}) = \deg(\pi)^2\cdot \deg(\lambda_{A_1}) = \deg\left( \lambda_A \right) = d^2. 
\]
If $d = p$, we are finished -- otherwise, we repeat the above procedure until the real abelian variety $A_n = A_{n-1}/K_{n-1}$ becomes principally polarized. \end{proof}

\section{Integral Hodge classes modulo torsion}

\subsection{The Fourier transform}\label{subsec:fourier} Let $A$ be a real abelian variety of dimension $g$, and consider the Poincar\'e bundle $\ca P_A$ on $A \times \wh A$. Let $a_1, \dotsc, a_{2g}$ be integers. The Chern character
\[
\ch(\ca P_{A_\CC}) = \exp(c_1(\ca P_{A_\CC})) \in  \rm H^{2\bullet}(A(\CC) \times \wh A(\CC), \ZZ(\bullet))
\]
defines the \emph{Fourier transform}
\begin{align}\label{fourierone}
\mr F_A \colon  \bigoplus_{i \in \ZZ_{\geq 0}} \rm H^i(A(\CC),\ZZ(a_i)) \to \bigoplus_{i \in \ZZ_{\geq 0}}\rm H^{i}(\wh A(\CC),\ZZ(a_{2g-i}- g + i)).
\end{align}
It is defined as $\mr F_A(x) = \pi_{2,\ast}( \ch(\ca P_A) \cdot \pi_1^\ast(x))$, where $\pi_i$ is the projection of $A \times \wh A$ onto the $i$-th factor. By \cite{beauvillefourier}, the map (\ref{fourierone}) is an isomorphism, inducing isomorphisms
\begin{align}\label{eq:fourier}
\mr F_A \colon \rm H^i(A(\CC),\ZZ(a_i)) \xrightarrow{\sim} \rm H^{2g-i}(\wh A(\CC),\ZZ(a_i+g-i)). 
\end{align}
Since $\ch(\ca P_{A_\CC})$ is fixed by $G$, these maps are isomorphisms of $G$-modules. 

\subsection{Divisors} \label{subsec:divisors}Let $A$ be an abelian variety over $\RR$. 
Both homomorphisms in the following composition are surjective:
\begin{align}
\label{divisorsurjectivity}
\CH^1(A) \to \Hdg_G^2(A(\CC),\ZZ(1)) \to \Hdg^2(A(\CC),\ZZ(1))^G. 
\end{align}
Indeed, the first map is surjective by the real integral Hodge conjecture for divisors (see \cite[Proposition 2.8]{BW20}), and the second by the degeneration of the Hochschild-Serre spectral sequence (see Section \ref{subsec:hochschild}). 

\subsection{The real integral Hodge conjecture for one-cycles modulo torsion} The goal of this section is to provide an application of Theorems \ref{density} and \ref{principalisogeny} combined: the real integral Hodge conjecture for one-cycles modulo torsion follows, in some cases, from the real integral Hodge conjecture for divisors modulo torsion (c.f.\ Section \ref{subsec:divisors}). For this, we use Fourier transforms on Betti cohomology of real abelian varieties (c.f.\ Section \ref{subsec:fourier}) 
and their lifts to integral Chow groups, see \cite{beckmanndegaayfortman, moonenpolishchuk}. The theory in \cite[Section 3]{beckmanndegaayfortman} was developed for abelian varieties over a general field $k$ -- to apply it, we take $k = \RR$. 

\begin{definition}
Let $A$ be a real abelian variety, and $k$ a non-negative integer. An element $\alpha \in \Hdg^{2k}(A(\CC),\ZZ(k))^G$ is called \emph{algebraic} if it is in the image of 
\[
\CH^k(A) \to  \Hdg^{2k}(A(\CC),\ZZ(k))^G. 
\]
\end{definition}
\noindent
By the main theorem of \cite{beckmanndegaayfortman}, the integral Hodge conjecture for one-cycles on a fixed principally polarized abelian variety $(A, \theta)$ over $\CC$ is equivalent to the algebraicity of the minimal class $\gamma_\theta = \theta^{g-1}/(g-1)!$ on $A$ (see \emph{loc.cit.}, Theorem 1.1). Moreover, Grabowski reduced the integral Hodge conjecture for one-cycles for every complex abelian variety of dimension $g$ to the algebraicity of $\gamma_\theta$ for every principally polarized abelian variety of dimension $g$, see \cite{grabowski}. We have the following real analogue of these results:


\begin{theorem} \label{grabowski}
Fix a positive integer $g$. Let $(A, \theta)$ be a principally polarized abelian variety of dimension $g$ over $\RR$. The following are equivalent: 
\begin{enumerate}
\item \label{realmodtors} The real abelian variety $A$ satisfies the real integral Hodge conjecture for one-cycles modulo torsion. 
\item \label{realminalg} The minimal class 
\[
\gamma_\theta = \frac{\theta^{g-1}}{(g-1)!} \in \Hdg^{2g-2}(A(\CC), \ZZ(g-1))^G \quad \text{ is algebraic.}
\]
\item \label{realchernalg} The Chern character 
\[
\ch(\ca P_{A_\CC}) = \exp(c_1(\ca P_{A_{\CC}})) \in \Hdg^{2\bullet}(A(\CC ) \times \wh A(\CC), \ZZ(\bullet))^G
\quad \text{ is algebraic.}
\]
\end{enumerate}
Moreover, if the real integral Hodge conjecture for one-cycles modulo torsion holds for every principally polarized abelian variety of dimension $g$ over $\RR$, then it holds for every abelian variety of dimension $g$ over $\RR$. 
\end{theorem}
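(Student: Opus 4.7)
The plan is to adapt the strategy of the main theorem of \cite{beckmanndegaayfortman} to the $G$-equivariant setting, working with the Fourier transform on $G$-equivariant Betti cohomology from (\ref{eq:fourier}) together with its integral lift to Chow groups developed in \cite[Section 3]{beckmanndegaayfortman}. The latter theory is set up over arbitrary base fields, so it applies with $k = \RR$. Since $c_1(\ca P_A) \in \Hdg^2(A(\CC) \times \wh A(\CC), \ZZ(1))^G$ is $G$-invariant and algebraic, being the first Chern class of the Poincar\'e line bundle on the real scheme $A \times \wh A$, the Fourier transform $\mr F_A$ is a $G$-equivariant isomorphism, and the real integral Hodge conjecture for divisors (Section \ref{subsec:divisors}) identifies $\Hdg^2(\wh A(\CC), \ZZ(1))^G$ with the image of $\CH^1(\wh A)$ under the cycle class map.

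The implication (\ref{realmodtors}) $\Rightarrow$ (\ref{realminalg}) is immediate, since $\gamma_\theta$ is a $G$-invariant Hodge class of degree $2(g-1)$. For (\ref{realchernalg}) $\Rightarrow$ (\ref{realmodtors}), I would take $\alpha \in \Hdg^{2g-2}(A(\CC), \ZZ(g-1))^G$ and apply $\mr F_A$: the class $\mr F_A(\alpha) \in \Hdg^2(\wh A(\CC), \ZZ(1))^G$ is a divisor class and hence algebraic. The hypothesis that $\ch(\ca P_{A_\CC})$ is algebraic means $\mr F_A$ is induced by a real algebraic correspondence on $A \times \wh A$, so its inverse $\mr F_{\wh A}$ likewise preserves algebraicity, and $\alpha$ is recovered (up to $[-1]^\ast$) as an algebraic class. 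For (\ref{realminalg}) $\Rightarrow$ (\ref{realchernalg}), I would use the isomorphism $A \cong \wh A$ induced by $\theta$ to express the components of $\exp(c_1(\ca P_A))$ as universal polynomials in $\theta$ whose integrality is controlled by $\gamma_\theta$, and then lift these expressions to $\CH^\bullet(A \times \wh A)$ using the integral Fourier machinery of \cite[Section 3]{beckmanndegaayfortman}.

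For the ``moreover'' statement, it suffices to establish (\ref{realchernalg}) for every real abelian variety $A$ of dimension $g$, because the argument for (\ref{realchernalg}) $\Rightarrow$ (\ref{realmodtors}) does not use that $A$ is principally polarized. Fix $A$ and pick a polarization $\lambda_A$; by Theorem \ref{principalisogeny}, there exists an isogeny $\phi \colon A \to B$ onto a principally polarized real abelian variety $(B, \lambda_B)$ with $\phi^\ast(\lambda_B) = \lambda_A$. The functorial identity $(\phi \times \id_{\wh B})^\ast \ca P_B = (\id_A \times \wh \phi)^\ast \ca P_A$, combined with the hypothesis that $\ch(\ca P_B)$ is algebraic and the push-pull identity $(\id_A \times \wh \phi)_\ast (\id_A \times \wh \phi)^\ast = \deg(\phi) \cdot \id$, shows that $\deg(\phi) \cdot \ch(\ca P_A)$ is algebraic. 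To cancel this factor I would apply Theorem \ref{density}: within the real-analytic $(p,q)$-Hecke orbit of $[B] \in \ca A_g(\RR)$ one finds further principally polarized isogeny targets $B'$ giving rise to isogenies $A \to B'$ whose degrees are collectively coprime, and then a Bezout linear combination of the corresponding algebraic multiples of $\ch(\ca P_A)$ yields the algebraicity of $\ch(\ca P_A)$ itself.

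The main obstacle will be precisely this last step, namely constructing isogenies of collectively coprime degrees. When the N\'eron-Severi group of $A$ has rank one, varying $\lambda_A$ only rescales it by integers and the resulting isogeny degrees produced by Theorem \ref{principalisogeny} share a common factor, so the extra flexibility must come from Hecke translates. Tracking how the degree of the composite isogeny $A \to B \to B'$ evolves under a real Hecke move, and ensuring that the resulting degrees can be made jointly coprime, will be the delicate part of the argument; moreover, since Theorem \ref{density} is stated only for distinct odd primes $p, q$, a supplementary argument may be needed to cover the prime $2$.
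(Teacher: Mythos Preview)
Your treatment of the equivalence $(\ref{realmodtors}) \Leftrightarrow (\ref{realminalg}) \Leftrightarrow (\ref{realchernalg})$ is essentially the paper's: the implication $(\ref{realminalg}) \Rightarrow (\ref{realchernalg})$ is exactly \cite[Proposition 3.11]{beckmanndegaayfortman}, and $(\ref{realchernalg}) \Rightarrow (\ref{realmodtors})$ is the Fourier--divisor argument you sketch.

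The gap is in the ``moreover'' part. You try to establish $(\ref{realchernalg})$ for a non--principally polarized $A$ by producing isogenies $A \to B$, $A \to B'$ to principally polarized targets whose degrees are collectively coprime, then taking a Bezout combination. As you yourself note, this runs into a real obstacle: Theorem \ref{principalisogeny} gives no control over $\deg(\phi)$, and Theorem \ref{density} only moves the target $B$ around by isogenies of odd degree, so there is no mechanism for killing the prime factors already present in $\deg(\phi)$. This step does not go through as stated.

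The paper avoids this entirely by not attempting to prove $(\ref{realchernalg})$ for general $A$. Instead it proves $(\ref{realmodtors})$ directly: since $\mr F_A \colon \Hdg^2(A(\CC),\ZZ(1))^G \to \Hdg^{2g-2}(\wh A(\CC),\ZZ(g-1))^G$ is an isomorphism and divisors are algebraic, it is enough to show $\mr F_A([L_\CC])$ is algebraic for each ample line bundle $L$ on $A$. Apply Theorem \ref{principalisogeny} to $(A,[L])$ to get $\phi \colon A \to B$ with $\phi^\ast(\theta) = [L_\CC]$; then Beauville's functoriality diagram \cite[Proposition 3]{beauvillefourier} gives $\mr F_A([L_\CC]) = \hat{\phi}_\ast(\mr F_B(\theta))$, and Beauville's formula \cite[Proposition 5]{beauvillefourier} gives $\mr F_B(\theta) = (-1)^{g-1}\gamma_{\hat\theta}$, which is algebraic by hypothesis. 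No coprimality is needed, and Theorem \ref{density} plays no role here.
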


\begin{proof}
The direction \ref{realmodtors} $\implies$\ref{realminalg} is trivial; let us assume that \ref{realminalg} holds. Let $(A, \theta)$ be a principally polarized abelian variety of dimension $g$ over $\RR$, and suppose that $\gamma_\theta$ 
is algebraic. 
By \cite[Proposition 3.11]{beckmanndegaayfortman}, the abelian variety $A$ admits a motivic integral Fourier transform up to homology, see \cite[Definition 3.1]{beckmanndegaayfortman}. This means the following. Let $\ell$ be a prime number. 
There exists a cycle 
\[
\Gamma \in \CH(A \times \wh A) \quad \textnormal{ such that } \quad [\Gamma_\CC] = \ch(\ca P_{A_{\CC}}) \quad \in \quad \rm H^{2\bullet}_{\et}(A_{\CC} \times \wh A_{\CC}, \ZZ_\ell(\bullet)).
\]
As a consequence, $[\Gamma_\CC] = \ch(\ca P_{A_\CC}) \in \rm H^{2\bullet}(A(\CC) \times \wh A(\CC), \ZZ(\bullet))^G$, which proves \ref{realchernalg}. 

Let us now assume that \ref{realchernalg} holds, and let $\Gamma \in \CH(A \times \wh A)$ be a cycle that induces the class $\ch(\ca P_{A_\CC})$ in Betti cohomology. The correspondence $\Gamma$ defines a group homomorphism $\Gamma_\ast \colon \CH^\bullet(A) \to \CH^\bullet(\wh A)$ such that the following diagram commutes:
{\small
\begin{align*}
\xymatrixcolsep{1pc}
\xymatrix{
\CH^1(A) \ar[d] \ar[r]  
& \CH^\bullet(A) \ar[d] \ar[r]^{\Gamma_\ast} &
 \CH^\bullet(\wh A) \ar[d] \ar[r] & 
 \CH_1(\wh A) \ar[d] \\
\Hdg^{2}(A(\CC),\ZZ(1))^G \ar[r] & \Hdg^{2\bullet}(A(\CC), \ZZ(\bullet))^G \ar[r]_\sim^{\mr F_A} & \Hdg^{2\bullet}(\wh A(\CC),\ZZ(\bullet))^G  \ar[r]& \Hdg^{2k}(\wh A(\CC),\ZZ(k))^G. 
}
\end{align*}
}
Here $k = g-1$, the composition on the bottom row is an isomorphism (see (\ref{eq:fourier}) in Section \ref{subsec:fourier}), and the left vertical arrow is surjective (see (\ref{divisorsurjectivity}) in Section \ref{subsec:divisors}). Therefore, the right vertical arrow is surjective, which implies \ref{realmodtors}. 

Next, suppose that \ref{realmodtors} holds for every principally polarized real abelian variety of dimension $g$, and let $A$ be any real abelian variety of dimension $g$. We would like to show that $A$ satisfies the real integral Hodge conjecture for one-cycles modulo torsion. The isomorphism (\ref{eq:fourier}) induces an isomorphism 
\[
\mr F_A \colon \Hdg^2(A(\CC), \ZZ(1))^G \xrightarrow{\sim} \Hdg^{2g-2}(\wh A(\CC), \ZZ(g-1))^G.
\]
Since $\Hdg^2(A(\CC), \ZZ(1))^G$ is algebraic by Section \ref{subsec:divisors}, it suffices to show that $\mr F_A([L_\CC])$ is algebraic for every line bundle $L$ on $A$. By \cite[II, Exercice 7.5]{HAG}, there is an ample line bundle $M$ on $A$ such that $L \otimes M^{\otimes n}$ is ample for $n \gg 0$; we may thus assume that $L$ is ample. By Theorem \ref{principalisogeny}, there is a principally polarized abelian variety $(B, \lambda)$, and an isogeny 
\[
\phi \colon A \to B
\]
 such that, if $\theta \in \NS(B_\CC)^G = \Hdg^{2}(B(\CC), \ZZ(1))^G$ is the class corresponding to the principal polarization $\lambda \colon B \to \wh B$, then 
\[
\phi^\ast(\theta) = [L_\CC] \in \Hdg^2(A(\CC), \ZZ(1))^G. 
\]
On the other hand, the following diagram commutes by \cite[Proposition 3]{beauvillefourier}:
\[
\xymatrixcolsep{5pc}
\xymatrix{
\Hdg^2(A(\CC), \ZZ(1))^G \ar[r]^{\mr F_A}          & \Hdg^{2g-2}(\wh A(\CC), \ZZ(g-1))^G          \\
\Hdg^2(B(\CC), \ZZ(1))^G \ar[r]^{\mr F_B}\ar[u]_{\phi^\ast}  & \Hdg^{2g-2}(\wh B(\CC), \ZZ(g-1))^G \ar[u]_{\hat{\phi}_\ast}. 
}
\]
Moreover, by \cite[Proposition 5]{beauvillefourier}, we have
\[
\mr F_B(\theta) = (-1)^{g-1}\cdot \frac{\hat{\theta}^{g-1}}{(g-1)!} \in \rm H^{2g-2}(\wh B(\CC), \ZZ(g-1))^G,
\]
where $\hat{\theta} \in \Hdg^2(\wh B(\CC), \ZZ(1))^G$ denotes the dual polarization class. We conclude that $\mr F_B(\theta)$ is algebraic, so that $\mr F_A([L_\CC]) = \wh{\phi}_\ast(\mr F_B(\theta))$ is algebraic as well. \end{proof}


\begin{corollary} \label{IHCmodulotorsionforjacobians} Let $C_1, \dotsc, C_n$ be smooth projective geometrically integral curves over $\RR$ such that $C_i(\RR)\neq \emptyset$ for each $i$. The real abelian variety $A = J(C_1) \times \cdots \times J(C_n)$ satisfies the real integral Hodge conjecture for one-cycles modulo torsion. 
\end{corollary}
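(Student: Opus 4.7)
The plan is to reduce the Corollary to Theorem~\ref{grabowski} applied to the product principal polarization, and then to verify the algebraicity of $\gamma_\theta$ by a direct Künneth--Poincaré calculation.

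First, I would set up the principal polarization on $A$. Each Jacobian $J(C_i)$ carries its canonical theta polarization, so $A = J(C_1) \times \cdots \times J(C_n)$ is principally polarized over $\RR$ with polarization class
\[
\theta = \sum_{i=1}^n \pi_i^* \theta_i \in \Hdg^2(A(\CC), \ZZ(1))^G,
\]
where $\pi_i\colon A \to J(C_i)$ is the projection and $\theta_i$ is the class of the theta divisor on $J(C_i)$ (which is $G$-invariant because the principal polarization $J(C_i) \to \wh{J(C_i)}$ is defined over $\RR$). Writing $g_i = \dim J(C_i)$ and $g = \sum g_i$, Theorem~\ref{grabowski} reduces the statement to showing that the minimal class $\gamma_\theta = \theta^{g-1}/(g-1)! \in \Hdg^{2g-2}(A(\CC), \ZZ(g-1))^G$ is algebraic.

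Second, I would expand $\gamma_\theta$ via the multinomial formula and use the relations $\theta_i^{k} = 0$ for $k > g_i$ and $\theta_i^{g_i}/g_i! = [o_i]$, the class of the origin of $J(C_i)$. Since $\sum a_i = g-1 = \sum g_i - 1$ and $a_i \leq g_i$, the only nonzero multi-indices are those with $a_j = g_j-1$ for a single index $j$ and $a_i = g_i$ for $i \neq j$. A short computation gives
\[
\gamma_\theta \;=\; \sum_{j=1}^n \pi_j^*(\gamma_{\theta_j}) \cdot \prod_{i \neq j} \pi_i^*([o_i]).
\]

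Third, I would verify algebraicity of each factor. The origin $o_i \in J(C_i)(\RR)$ is a real point, so $[o_i]$ is the cycle class of a real zero-cycle and is algebraic. For $\gamma_{\theta_j}$, the hypothesis $C_j(\RR) \neq \emptyset$ lets me pick a real point $p_j \in C_j(\RR)$ and form the Abel--Jacobi embedding $\iota_j \colon C_j \hookrightarrow J(C_j)$ based at $p_j$, which is a morphism of real varieties. By Poincaré's formula, $\iota_{j,*}[C_j] = \theta_j^{g_j-1}/(g_j-1)! = \gamma_{\theta_j}$ in $\Hdg^{2g_j-2}(J(C_j)(\CC), \ZZ(g_j-1))^G$. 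Hence $\gamma_{\theta_j}$ is algebraic, and therefore so is each summand in the displayed expression for $\gamma_\theta$ (cycle class commutes with pull-back along $\pi_i$ and with cup product). Applying Theorem~\ref{grabowski} concludes the proof.

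There is no serious obstacle here: the ingredients are Poincaré's formula (applied over $\RR$ using the real Abel--Jacobi embedding, which exists because of the real point hypothesis), the multinomial decomposition of $\theta^{g-1}$, and the reduction provided by Theorem~\ref{grabowski}. The only point requiring mild care is to check compatibility with the $G$-action throughout, which holds automatically because every morphism and base point involved is defined over $\RR$.
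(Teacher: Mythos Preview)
Your argument is correct and follows essentially the same route as the paper: reduce to Theorem~\ref{grabowski}, decompose the minimal class of the product into the minimal classes of the factors, and then invoke Poincar\'e's formula via a real Abel--Jacobi embedding. The paper states the decomposition of $\gamma_\theta$ as a one-line fact and reduces to $n=1$, whereas you spell out the multinomial computation and make the algebraicity of the point classes $[o_i]$ explicit; your formula $\gamma_\theta=\sum_j \pi_j^*(\gamma_{\theta_j})\cdot\prod_{i\neq j}\pi_i^*([o_i])$ is in fact a more precise rendering of what the paper calls ``the sum of the pull-backs of the minimal classes on the factors.''
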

\begin{proof}
The minimal class on a product of principally polarized abelian varieties over $\RR$ is the sum of the pull-backs of the minimal classes on the factors, so by Theorem \ref{grabowski}, it suffices to treat the case $n = 1$. For a real algebraic curve $C$ whose real locus is non-empty, any Abel-Jacobi map gives an embedding of real varieties $\iota \colon C \hookrightarrow J(C)$. By Poincar\'e's formula, one has 
\[
[\iota(C)_\CC] = \frac{\theta^{g-1}}{(g-1)!} \in \Hdg^{2g-2}(J(C)(\CC), \ZZ(g-1))^G,
\]
where the class on the right hand side of the equality is the minimal cohomology class $\gamma_\theta$ of $J(C)$. Thus $\gamma_\theta$ is algebraic, so we are done by Theorem \ref{grabowski}. 
\end{proof}

\subsection{Integral Hodge classes modulo torsion on real abelian threefolds}\label{sec:modtors}
We define a \emph{real algebraic curve} as a smooth projective geometrically connected curve over $\RR$. Let $\ca M_3(\RR)$ be the moduli space of real algebraic curves of genus three (see \cite{seppalasilhol} and \cite[Theorem 8.2]{MR4479836}), and consider the Torelli map 
\begin{align*}
t \colon \ca M_3(\RR) \to \ca A_3(\RR). 
\end{align*}
Let $\ca N_3(\RR) \subset \ca M_3(\RR)$ be the non-hyperelliptic locus. 

\begin{lemma}\label{nonhyperelliptic-open}The subset $t(\ca N_3(\RR))$ is open in $\ca A_3(\RR)$. \end{lemma}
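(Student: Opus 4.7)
The plan is to show that the restriction $t|_{\ca N_3(\RR)}$ is a local real-analytic isomorphism onto its image; this automatically implies that $t(\ca N_3(\RR))$ is open in $\ca A_3(\RR)$.

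First, I would observe that $\ca N_3(\RR) \subset \ca M_3(\RR)$ is itself open. The hyperelliptic locus $\ca H_3 \subset \ca M_3$ is a closed substack---it is, for instance, the image of the proper Hurwitz stack parametrizing degree-$2$ covers $C \to \PP^1$ by smooth genus-$3$ curves---and a closed substack defined over $\RR$ cuts out a closed subset of real points in the real-analytic topology.

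Second, at each $[C] \in \ca N_3(\RR)$ the derivative $d_{[C]} t$ is an isomorphism of tangent spaces. By infinitesimal Torelli, its codifferential is identified with the multiplication map
\[
\mu \colon \Sym^2 \rm H^0(C, \omega_C) \to \rm H^0(C, \omega_C^{\otimes 2}),
\]
whose source and target both have dimension $6 = 3g-3$. Since $C$ is non-hyperelliptic, its canonical model is a smooth plane quartic $C \subset \PP^2$, which by B\'ezout is contained in no conic; hence $\Ker \mu = 0$, and by dimension count $\mu$ is an isomorphism.

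Third, I would conclude via the real-analytic inverse function theorem applied to $t$ on uniformizing charts. Both $|\ca M_3(\RR)|$ and $|\ca A_3(\RR)|$ are locally modeled on quotients of Teichm\"uller space and Siegel space (respectively) by arithmetic group actions with finite stabilizers, so after passing to a finite cover that trivializes automorphisms near $[C]$, the classical inverse function theorem applied using Step 2 produces a real-analytic inverse of $t$ on a neighborhood of $t([C])$. The main mild obstacle is the passage from stacks to coarse moduli spaces in the real-analytic category, but finiteness of the automorphism groups of genus-$3$ curves and principally polarized abelian threefolds makes this routine.
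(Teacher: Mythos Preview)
Your approach is correct but takes a genuinely different route from the paper. The paper argues entirely at the stack level: it cites the known fact that the Torelli morphism $\ca M_3 \to \ca A_3$ is an open immersion of algebraic stacks when restricted to $\ca N_3$, and then invokes a general result (from \cite{MR4479836} and the author's thesis) that an open substack defined over $\RR$ gives an open subset of real points in the real-analytic topology. No differential computation or inverse function theorem appears.

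Your argument instead unfolds the ``open immersion'' black box via infinitesimal Torelli and handles the descent to real-analytic coarse spaces by hand. What this buys you is a self-contained, classical proof that does not depend on the stack-theoretic framework for real-analytic topologies. What the paper's approach buys is brevity and uniformity: once the general compatibility between open immersions of stacks and the real-analytic topology is established, the lemma is immediate, with no need to discuss uniformizing charts or automorphism groups. Your Step~3 is precisely where that machinery earns its keep. One small remark: for mere \emph{openness} of the image (which is all the lemma asserts), the matching of $\Aut(C)$ with $\Aut(J(C),\theta)$ is not actually needed---a local submersion between the uniformizing charts suffices, since quotient maps by finite group actions are open. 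So your handwave there, while correct in spirit, is overcautious.
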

\begin{proof}
On the level of stacks, $\ca T \colon \ca M_3 \to \ca A_3$ is an open immersion when restricted to the non-hyperelliptic locus $\ca N_3 \subset \ca M_3$. 
Moreover, for any algebraic stack $\mr X$ of finite type over $\RR$, the set $|\mr X(\RR)|$ of isomorphism classes of $\RR$-points of $\mr X$ admits a \emph{real-analytic topology} (see \cite[Definition 7.5]{MR4479836}). For this topology, the subset
$|\ca T(\ca N_3)(\RR)| \subset |\ca A_3(\RR)|$ is open by \cite[Corollary 2.8.2]{thesis-degaayfortman}. By Remark \ref{remark:bijection=homeo}, we are done. 
\end{proof}

\begin{lemma} \label{lemma:opensubsetofnonhyp}
Every connected component of the moduli space $\ca A_3(\RR)$ contains a non-empty open subset of non-hyperelliptic curves of genus three with non-empty real locus. 
\end{lemma}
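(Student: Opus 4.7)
The plan is to establish two properties separately in each connected component $\ca A_3(\RR)^+$ of $\ca A_3(\RR)$: that the locus of Jacobians $[J(C)]$ with $C$ a non-hyperelliptic real curve of genus three satisfying $C(\RR) \neq \emptyset$ is (a) open and (b) nonempty in $\ca A_3(\RR)^+$.

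Openness is immediate from Lemma \ref{nonhyperelliptic-open} combined with local constancy of the topological type. Indeed, the Torelli morphism restricts to an open immersion of stacks on the non-hyperelliptic locus, so on coarse moduli spaces $t$ identifies $\va{\ca N_3(\RR)}$ homeomorphically with its open image in $\ca A_3(\RR)$. The condition $C(\RR) \neq \emptyset$ is a topological invariant of $C$, and hence constant on each connected component of $\va{\ca M_3(\RR)}$. Thus the subset of non-hyperelliptic real curves of genus three with nonempty real locus is open in $\va{\ca N_3(\RR)}$, and its image under $t$ is open in $\ca A_3(\RR)$.

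For nonemptiness in a given component, I would first argue by dimension count that $t(\ca N_3(\RR))$ itself is already dense in $\ca A_3(\RR)^+$. Each connected component of $\ca A_3(\RR)$ has real dimension $6 = 3\cdot 4/2$ via Silhol's parametrization $\va{\ca A_3(\RR)}^\tau \cong \GL_3^\tau(\ZZ) \setminus \rm H_3$, whereas the decomposable locus of $\ca A_3$ has complex dimension at most $4$ and the hyperelliptic Jacobian locus has complex dimension $5$. Their real points therefore form subsets of real dimension at most $5 < 6$. Together with Torelli over $\RR$, which identifies $\ca M_3$ with the non-decomposable locus of $\ca A_3$ and hence identifies $\va{\ca M_3(\RR)}$ with the non-decomposable locus of $\ca A_3(\RR)$, this shows that $t(\ca N_3(\RR))$ is dense in every connected component of $\ca A_3(\RR)$.

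The main obstacle is then to verify that each of the finitely many components of $\ca A_3(\RR)$ contains at least one non-hyperelliptic Jacobian $[J(C)]$ with $C(\RR) \neq \emptyset$; by the openness argument above, any such point is automatically contained in a nonempty open subset. Since the components are indexed by Silhol types $\tau \in \mr T(3)$, this amounts to exhibiting, for each $\tau$, a real smooth plane quartic $C_\tau \subset \PP^2_\RR$ with $C_\tau(\RR) \neq \emptyset$ and with $[(J(C_\tau),\theta_{C_\tau})]$ of Silhol type $\tau$. I would produce such curves by appealing to the Gross-Harris and Silhol classification of real curves of genus three, which computes the Silhol type of $J(C)$ directly from the topological type $(r,a)$ of $C$ via a symplectic basis of $\rm H_1(C(\CC),\ZZ)$ adapted to the real structure. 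The combinatorial verification is feasible because the number of topological types $(r,a)$ of non-hyperelliptic real curves of genus three with $r \geq 1$ exceeds $\va{\mr T(3)}$, so the map sending such a topological type to the Silhol type of its Jacobian can be shown to be surjective onto $\mr T(3)$ by inspection.
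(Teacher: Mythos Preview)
Your final step---exhibiting, for each connected component, a non-hyperelliptic real genus-three curve with nonempty real locus whose Jacobian lies in that component, via the Gross--Harris classification---is exactly what the paper does (citing \cite[p.~182, Proposition~3.1, and the table on p.~174]{grossharris}), and together with the openness from Lemma~\ref{nonhyperelliptic-open} this already proves the lemma.

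The intermediate density argument, however, is both unnecessary and has a genuine gap. Your claim that ``Torelli over $\RR$ identifies $\va{\ca M_3(\RR)}$ with the non-decomposable locus of $\ca A_3(\RR)$'' is false. Over $\CC$ the Torelli map is a bijection from coarse moduli onto the indecomposable locus, but it is \emph{not} surjective on $\RR$-points: if $C$ is a non-hyperelliptic genus-three curve over $\RR$ with real structure $\tau$ and $\Aut(C_\CC)=\{\id\}$, then the principally polarized abelian variety obtained from $J(C_\CC)$ by equipping it with the real structure $-J(\tau)$ is an indecomposable real threefold that is not the Jacobian of any real curve (the unique real form of $C_\CC$ is $C$ itself, and $(J(C_\CC),-J(\tau))\not\cong(J(C_\CC),J(\tau))$ since $\Aut(J(C_\CC),\theta)=\{\pm 1\}$ is central). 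Thus the complement of $t(\ca M_3(\RR))$ in the indecomposable locus of $\ca A_3(\RR)$ contains a nonempty open set, and your dimension count does not by itself show that $t(\ca N_3(\RR))$ meets every component. Fortunately this step is superfluous: once a single suitable curve is produced in each component (your last paragraph), openness gives the required nonempty open subset directly.
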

\begin{proof}
Let $\ca A_3(\RR)^\tau$ be a connected component of $\ca A_3(\RR)$. By \cite[page 182]{grossharris}, there is a (unique) connected component $\ca M_3(\RR)^\tau$ of $\ca M_3(\RR)$ that satisfies the following two conditions: $C(\RR)$ is not empty for each $[C] \in \ca M_3(\RR)^\tau$, and
\[
t \left(\ca M_3(\RR)^\tau\right) \subset \ca A_3(\RR)^\tau. 
\]
Now $\ca M_3(\RR)^\tau$ contains a component $\ca N_3(\RR)^\tau$ of $\ca N_3(\RR)$ by \cite[Proposition 3.1]{grossharris} and the table on page 174 of \emph{loc.cit.} Moreover, $\ca N_3(\RR)^\tau$ is open in $\ca A_3(\RR)$ by Lemma \ref{nonhyperelliptic-open}. 
\end{proof}

\begin{proof}[Proof of Corollary \ref{usefulcorollary}] 
This follows directly from Theorem \ref{density} and Lemma \ref{lemma:opensubsetofnonhyp}.\end{proof}
\noindent
We are now in the position to prove Theorem \ref{theorem1}. 
\begin{proof}[Proof of Theorem \ref{theorem1}]  By Theorem \ref{grabowski}, it suffices to show that for any principally polarized abelian threefold $(A, \theta)$ over $\RR$, the class $\gamma_\theta = \theta^2/2 \in \Hdg^4(A(\CC),\ZZ(2))^G$ is algebraic. Let us prove this. Let $p$ and $q$ be two distinct odd prime numbers. By Corollary \ref{usefulcorollary}, there exists a real algebraic curve $C$ of genus three such that $C(\RR) \neq \emptyset$ together with an isogeny 
\[
\phi \colon A \to J(C)
\]
such that $\phi^\ast(\lambda_C) = p^n q^m\cdot \lambda_A$ for some non-negative integers $n$ and $m$. Here $\lambda_C$ denotes the canonical polarization on $J(C)$. Let $\theta_C \in \Hdg^2(J(C)(\CC),\ZZ(1))^G$ be the corresponding cohomology class; then $\phi^\ast(\theta_C) = p^nq^m \cdot \theta$. Since $C(\RR) \neq \emptyset$, the minimal class $\theta^2_C/2$ on $J(C)$ is algebraic; see Corollary \ref{IHCmodulotorsionforjacobians}. Therefore, the class 
\[
p^{2n}q^{2m} \cdot \theta^2/2 = \phi^\ast(\theta^2_C/2) \in \Hdg^{4}(A(\CC), \ZZ(2))^G 
\] is algebraic. We are done because $\theta^2$ is algebraic and the integer $p^nq^m$ is odd. 
\end{proof}

\begin{proof}[Proof of Corollary \ref{IHCforconnected}] For each $i \in \{1, \dotsc, 6\}$, cup-product defines a canonical isomorphism of $G$-modules:
\[
\bigwedge^i\rm H^1(A(\CC), \ZZ) \xrightarrow{\sim} \rm H^i(A(\CC), \ZZ). 
\]
This allows us to calculate the $G$-module structure on $\rm H^i(A(\CC), \ZZ)$, for $\rm H^1(A(\CC), \ZZ) \cong \ZZ[G]^3$ as $G$-modules \cite[Example 3.1]{vanhamel}. It turns out that the group $\rm H^p(G, \rm H^q(A(\CC), \ZZ))$ vanishes whenever $p + q = 4$ and $p > 0$. Therefore, \[\Hdg^4_G(A(\CC), \ZZ(2))_0 = \Hdg^4(A(\CC), \ZZ(2))^G\] by Section \ref{subsec:hochschild} and Lemma \ref{lemma:important0}. By Theorem \ref{theorem1}, we are done.
\end{proof}

\section{Torsion cohomology classes on real abelian varieties}

\subsection{Isogenies and torsion cohomology classes} Let $X$ be a smooth projective variety over $\RR$, and let $k$ be any non-negative integer. 

\begin{definition} \label{isogenydef:subgroups}
For a subgroup $K \subset \rm H^{2k}_G(X(\CC), \ZZ(k))$, let
$
K_0 = K \cap \rm H^{2k}_G(X(\CC),\ZZ(k))_0$. In case the Hochschild-Serre spectral sequence (\ref{hochschild}) degenerates, we define, for $p > 0$,
\begin{align*}
\rm H^p(G, \rm H^{2k-p}(X(\CC),\ZZ(k)))_0 = \Ima(F^p\rm H^{2k}_G(X(\CC),\ZZ(k))_0 \to \rm H^p(G, \rm H^{2k-p}(X(\CC),\ZZ(k)))). 
\end{align*}
A subgroup $K \subset \rm H^{2k}_G(X(\CC),\ZZ(k))$ is \emph{algebraic} if every element of $K$ is in the image of the cycle class map (\ref{realcycleclassmap}). If the Hochschild-Serre spectral sequence (\ref{hochschild}) degenerates, we call a subgroup $L \subset \rm H^p(G, \rm H^{2k-p}(X(\CC), \ZZ(k)))$ \emph{algebraic} if $L$ is the image of an algebraic subgroup $K \subset F^p\rm H^{2k}_G(X(\CC),\ZZ(k))$ under $F^p\rm H^{2k}_G(X(\CC),\ZZ(k)) \to  \rm H^p(G, \rm H^{2k-p}(X(\CC), \ZZ(k)))$. 
\end{definition}
\noindent
The following lemma provides the key to our proof of Theorem \ref{reduction}. 

\begin{lemma} \label{algebraicity}
Let $A$ and $B$ be real abelian varieties, and $\phi$ an isogeny
$
A \to B
$
of odd degree. For all integers $p, k, i$ with $p>0$, the pull-back $\phi^\ast$ defines an isomorphism 
\begin{align} \label{filtrationisomorphism}
F^p\rm H^{k}_G(B(\CC), \ZZ(i))_0 \xrightarrow{\sim} F^p\rm H^{k}_G(A(\CC), \ZZ(i))_0. 
\end{align}
If $k \in 2 \ZZ_{\geq 0}$ and $i = k/2$, then $\phi^\ast$ restricts to an isomorphism between the subgroups of classes that satisfy the topological condition (\ref{topologicalcondition}). In this case, the left-hand side of (\ref{filtrationisomorphism}) is algebraic (in the sense of Definition \ref{isogenydef:subgroups}) if and only if the right-hand side of (\ref{filtrationisomorphism}) is algebraic. 
\end{lemma}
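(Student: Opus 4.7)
The plan is to exploit the projection formula: for an isogeny $\phi \colon A \to B$ of degree $d$, the compositions $\phi_\ast \circ \phi^\ast$ and $\phi^\ast \circ \phi_\ast$ act on (equivariant) cohomology as multiplication by $d$. Since $d$ is odd, multiplication by $d$ is an automorphism of any $2$-primary torsion abelian group, and the isomorphism (\ref{filtrationisomorphism}) will follow once I observe that the subgroups $F^p$ for $p \geq 1$ are $2$-primary torsion.

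First, I would show that for any real abelian variety $X$ and any $p \geq 1$, the subgroup $F^p \rm H^k_G(X(\CC), \ZZ(i))$ is $2$-primary torsion. This follows from the degeneration of the Hochschild--Serre spectral sequence (\ref{hochschild}) for real abelian varieties (Section \ref{subsec:hochschild}), which exhibits $F^p$ as a finite iterated extension of the groups $\rm H^q(G, \rm H^{k-q}(X(\CC), \ZZ(i)))$ with $q \geq 1$, each of which is annihilated by $2$ since $G = \ZZ/2$. The map $\phi^\ast$ is then injective on $F^p$: if $\phi^\ast(\beta) = 0$ and $2^N \beta = 0$, then $d \beta = \phi_\ast \phi^\ast(\beta) = 0$, and choosing $e \in \ZZ$ with $de \equiv 1 \pmod{2^N}$ forces $\beta = ed\beta = 0$. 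Surjectivity follows symmetrically from $\phi^\ast \phi_\ast = d \cdot \id$: given $\alpha \in F^p \rm H^k_G(A(\CC), \ZZ(i))$ with $2^N \alpha = 0$ and $e$ as above, one has $\alpha = \phi^\ast(e \cdot \phi_\ast \alpha)$.

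For the second assertion, both $\phi^\ast$ and $\phi_\ast$ preserve the topological condition (\ref{topologicalcondition}) by \cite[\S 1.6.4]{BW20}, so the isomorphism restricts to the subgroups of classes satisfying this condition when $(k,i) = (2\ell, \ell)$. The algebraicity statement is equally formal: $\phi^\ast$ and $\phi_\ast$ are induced via the cycle class map by flat pull-back and proper push-forward on Chow groups, respectively, so both preserve algebraic subgroups in the sense of Definition \ref{isogenydef:subgroups}. Combined with the explicit inverse $e \cdot \phi_\ast$ of $\phi^\ast$ constructed above, this gives the equivalence of algebraicity of the two sides of (\ref{filtrationisomorphism}). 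No substantial obstacle arises; the lemma is essentially a formal consequence of the projection formula and of the $2$-primary character of the positive-degree steps of the Hochschild--Serre filtration.
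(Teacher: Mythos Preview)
Your argument is correct and close in spirit to the paper's, but the two differ in one technical choice that is worth flagging. You use the pushforward $\phi_\ast$ together with the two identities $\phi_\ast\phi^\ast = d$ and $\phi^\ast\phi_\ast = d$ on $F^p$. The first is the projection formula and is unproblematic. The second, however, is not a formal consequence of the projection formula: for a finite covering the composite $\phi^\ast\phi_\ast$ is in general the sum over deck transformations, and in the equivariant setting translations by non-real points of $\ker(\phi)$ are not even $G$-maps. The identity $\phi^\ast\phi_\ast = d$ does hold here, but it needs an argument: since $d$ is odd, the finite group $\ker(\phi)(\RR)$ has odd order and therefore lands in the identity component $A(\RR)^0$ (as $\pi_0(A(\RR))$ is a $2$-group), so translations by real kernel points are $G$-equivariantly isotopic to the identity; each conjugate pair of complex kernel points contributes the corestriction--restriction composite, which is multiplication by $2$. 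Summing gives $d$. You should supply this or an equivalent justification.

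The paper sidesteps this issue entirely by replacing $\phi_\ast$ with the pullback $\psi^\ast$ along the isogeny $\psi\colon B\to A$ satisfying $\psi\phi=[d]_A$ and $\phi\psi=[d]_B$. One is then reduced to showing that $[d]^\ast$ is an automorphism of $F^p$, which the paper proves by descending induction on $p$ using that $[d]^\ast$ acts as $d^{\,q}\equiv 1\pmod 2$ on each graded piece $\rm H^p(G,\rm H^q)$. This route uses only pullbacks and avoids the subtlety above; your route is slightly more direct once the identity is established, and has the advantage of giving an explicit inverse $e\cdot\phi_\ast$ that makes the algebraicity equivalence transparent.
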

\begin{proof}
Let us first prove the lemma in the case $A = B$ and $\phi = [d]$ for some integer $d \in \ZZ$ with $d \not \equiv 0 \bmod 2$. 
Write $\rm H = \rm H^{k}_G(A(\CC), \ZZ(i))$. The Hochschild-Serre spectral sequence (\ref{hochschild}) degenerates, so $F^1\rm H = \rm H[2]$ and for each $p \in \{0,\dotsc,k\}$ there is an exact sequence
\[
0 \to F^{p+1}\rm H \to F^p \rm H \to \rm H^p(G, \rm H^{k-p}(A(\CC),\ZZ(i))) \to 0. 
\]
We have $F^{k + 1}\rm H = (0)$, and for each $p \in \{1,\dotsc,k\}$, the morphism \[[d]^\ast \colon \rm H^{k-p}(A(\CC),\ZZ(i)) \to \rm H^{k-p}(A(\CC),\ZZ(i))\] induces the identity on $\rm H^p(G, \rm H^{k-p}(A(\CC),\ZZ(i)))$. By the snake lemma and descending induction on $p$, we find that $[d]^\ast \colon \rm H \to \rm H$ restricts to an isomorphism on $F^p\rm H$ for each $p>0$. 

Write $\rm M = \rm H^{2k}_G(A(\CC), \ZZ(k))$. By \cite[\S1.6.4]{BW20}, the class $[d]^\ast(\alpha)$ satisfies the topological condition (\ref{topologicalcondition}) whenever $\alpha \in \rm M$ the topological condition (\ref{topologicalcondition}), thus $[d]^\ast$ induces an embedding $F^p\rm M_0 \to F^p\rm M_0$, which is an isomorphism since $F^p\rm M_0$ is finite. For similar reasons, $[d]^\ast \colon F^p\rm M_{\tn{alg}}  \to  F^p\rm M_{\tn{alg}} $ is an isomorphism, where $F^p\rm M_{\tn{alg}}  \subset F^p\rm M$ is the subgroup of algebraic elements. In the general case, let $\psi \colon B \to A$ be an isogeny such that $\psi \circ \phi = [d]_A$ and $\phi \circ \psi = [d]_B$, where $d$ is the degree of the isogeny $\phi$. The equalities
\[
\psi^\ast \circ \phi^\ast = [d]_A^\ast \quad \textnormal{ and } \quad \psi^\ast \circ \phi^\ast = [d]_B^\ast 
\]
together with the conclusion of the lemma for $[d]_A$ and $[d]_B$ readily imply the result. 
\end{proof}


\subsection{Reduction to the Jacobian case} \begin{proof}[Proof of Theorem \ref{reduction}]
\
Let $\ca A_3(\RR)^+$ be a connected component of $\ca A_3(\RR)$, 
and consider a point $x = [(A, \lambda)] \in \ca A_3(\RR)^+$. By Corollary \ref{usefulcorollary}, there is a real algebraic curve $C$ with non-empty real locus together with an isogeny 
\[
\phi \colon A \to J(C)
\]
that preserves the polarizations up to an odd positive integer. By \cite[page 180]{grossharris}, we have $[J(C)] \in \ca A_3(\RR)^+$ for the moduli point $[J(C)]$ of the principally polarized Jacobian $J(C)$ of $C$. The following sequence is exact (see Lemma \ref{lemma:important0}): 
\[
0 \to \rm H^4_G(A(\CC),\ZZ(2))_0[2] \to \Hdg^{4}_G(A(\CC),\ZZ(2))_0 \to \Hdg^4(A(\CC),\ZZ(2))^G \to 0.  
\]
By Theorem \ref{theorem1}, we know that $\Hdg^4(A(\CC),\ZZ(2))^G$ is generated by classes of one-cycles on $A$. Therefore, $A$ satisfies the real integral Hodge conjecture if and only if the group $\rm H^4_G(A(\CC),\ZZ(2))_0[2]$ is algebraic. By Lemma \ref{algebraicity}, this is equivalent to the algebraicity of $\rm H^4_G(J(C)(\CC),\ZZ(2))_0[2]$. 
\end{proof}

\subsection{Analysis of the Hochschild-Serre filtration} \label{subsec:analysis}Let $A$ be an abelian variety over $\RR$, define $\rm H = \rm H^4_G(A(\CC),\ZZ(2))$, and consider the Hochschild-Serre filtration (c.f.\ Section \ref{subsec:hochschild}):
\begin{align}\label{serrefiltration}
0 \subset \rm H^4(G,\rm H^0(A(\CC),\ZZ(2))) = F^4\rm H \subset F^3\rm H \subset F^2\rm H \subset F^1\rm H = \rm H[2] \subset \rm H. 
\end{align}
The pull-back $\pi^\ast$ along the structural morphism $\pi \colon A \to \Spec(\RR)$ defines a section 
\[
\ZZ/2 =  \rm H^4_G(\{x\}, \ZZ(2)) = \rm H^4(G, \rm H^0(\{x\}, \ZZ(2))) \to \rm H^4(G, \rm H^0(A(\CC),\ZZ(2))) \subset \rm H
\]
of the restriction $\rm H \to \rm H^4_G(\{x\},\ZZ(2)) = \ZZ/2$ to the equivariant cohomology of any $\RR$-point $x \in X(\RR)$. By Section \ref{subsec:threefolds}, this implies that $F^4\rm H_0 = (0)$, so that the intersection of (\ref{serrefiltration}) with the group of classes satisfying the topological condition (\ref{topologicalcondition}) becomes
\begin{align}\label{topologically-distinguished-filtration}
0 \subset F^3\rm H_0 \subset F^2\rm H_0 \subset F^1\rm H_0 \subset \rm H_0. 
\end{align}
Continue to consider our abelian variety $A$ over $\RR$. 
\emph{Assume that there exists a cycle}
\[
\Gamma \in \CH(A \times \wh A) \quad \textnormal{ \emph{such that} } \quad [\Gamma_{\CC}] = \ch(\ca P_{A_\CC})  \in  
\rm H^{2\bullet}(A(\CC) \times \wh A(\CC),\ZZ(\bullet))^G. 
\]
Under this assumtion, we may define a homomorphism $\Gamma_\ast$ as in the following diagram:
\begin{align} \label{fundamentalcorrespondence}
\begin{split}
\xymatrixcolsep{5pc}
\xymatrix{
\rm H^{2\bullet}_G(A(\CC),\ZZ(\bullet)) \ar[d]^{\Gamma_\ast}\ar[r]^{\pi_1^\ast} & \rm H^{2\bullet}_G(A(\CC) \times \wh A(\CC),\ZZ(\bullet))  \ar[d]^{\left([\Gamma] \cdot -\right)}  \\
\rm H^{2\bullet}_G(\wh A(\CC),\ZZ(\bullet)) & \rm H^{2\bullet}_G(A(\CC) \times \wh A(\CC),\ZZ(\bullet))  
\ar[l]^{\pi_{2,\ast}}. 
}
\end{split}
\end{align}
Then $\Gamma_\ast$ preserves the classes that satisfy the topological condition (\ref{topologicalcondition}) by \cite[Theorem 1.21]{BW20}. Define 
\begin{align}\label{fundamental-ij}
\Gamma_{\ast}^{i,j} \colon \rm H^{2i}_G(A(\CC),\ZZ(i)) \to \rm H^{2j}_G(\wh A(\CC),\ZZ(j)) \quad (i,j \in \ZZ_{\geq 0})
\end{align}
as the composition of $\Gamma_\ast$ with the natural inclusion and projection morphisms. 
\\
\\
With these definitions in place, we are in a position to prove:

\begin{proposition} \label{vanishing}
Let $A$  be an abelian variety over $\RR$. Then
\[
F^3\rm H^4_G(A(\CC),\ZZ(2))_0 = (0). 
\]
\end{proposition}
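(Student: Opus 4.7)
My plan is to combine the vanishing $F^4\rm H^4_G(A(\CC), \ZZ(2))_0 = 0$ (established in Section \ref{subsec:analysis}) with an application of the Fourier correspondence $\Gamma_\ast$ from (\ref{fundamentalcorrespondence}). The first observation is that, since any real abelian variety has the origin $0 \in A(\RR)$ as a real point, the argument from Section \ref{subsec:analysis} yields $F^4\rm H^4_G(A(\CC), \ZZ(2))_0 = 0$: the generator of $F^4\rm H = \rm H^4(G, \ZZ)$ is the pull-back of the tautological class along $\pi\colon A \to \Spec\RR$, and its restriction to $0 \in A(\RR)$ is non-zero in $\rm H^4(G, \ZZ) = \ZZ/2$, so $\phi_0$ detects it. It thus suffices to show that the image of any $\alpha \in F^3\rm H^4_G(A(\CC), \ZZ(2))_0$ in the graded piece $F^3/F^4 \cong \rm H^3(G, \rm H^1(A(\CC), \ZZ(2)))$ is zero.

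For this, I would apply the Fourier correspondence $\Gamma_\ast$ from (\ref{fundamentalcorrespondence}). The required algebraic lift $\Gamma \in \CH(A \times \wh A)$ of $\ch(\ca P_{A_\CC})$ exists by \cite[Proposition 3.11]{beckmanndegaayfortman} after reducing to a principally polarized abelian variety isogenous to $A$ via Theorem \ref{principalisogeny}. The operator $\Gamma_\ast$ preserves both the topological condition (\ref{topologicalcondition}) (by \cite[Theorem 1.21]{BW20}) and the Hochschild-Serre filtration. In particular, the component $\Gamma_\ast^{2, g-2}\colon \rm H^4_G(A(\CC), \ZZ(2)) \to \rm H^{2g-4}_G(\wh A(\CC), \ZZ(g-2))$ realizes the Fourier isomorphism $\mr F_A$ in degree $4$ (see Section \ref{subsec:fourier}), hence is an isomorphism of $G$-equivariant cohomology and induces isomorphisms on each graded piece $\rm H^p(G, \rm H^{4-p}(A, \ZZ(2))) \xrightarrow{\sim} \rm H^p(G, \rm H^{2g-4-p}(\wh A, \ZZ(g-2)))$.

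The decisive step is an analysis on $\wh A$ via the real Abel-Jacobi map for zero-cycles combined with the real integral Hodge conjecture for zero-cycles on $\wh A$, which holds unconditionally by \cite[\S 2.3.1]{BW20}. After Fourier transport, the graded piece $F^3/F^4$ of $\rm H^4_G(A, \ZZ(2))$ is identified with a specific piece on $\wh A$ controlled by top-degree cohomology; one then uses the compatibility between the cycle class map and the Abel-Jacobi map (the real analogue that will be made precise in Theorem \ref{abeljacobicommutativity}) to show that a class satisfying the topological condition and arising in this way must vanish, since the only classes in the target realizable by algebraic zero-cycles of degree zero live outside the relevant subgroup. Transporting back through $\Gamma_\ast^{2, g-2}$ forces $\alpha \in F^4$, and then $F^4\rm H_0 = 0$ yields $\alpha = 0$. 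The main obstacle will be tracking the Fourier correspondence at the level of the Hochschild-Serre graded pieces with the correct Tate twists, and verifying that the Abel-Jacobi realization on $\wh A$ is compatible enough with both the topological condition (\ref{topologicalcondition}) and the filtration $F^{\bullet}$ to conclude the vanishing cleanly for every $g$.
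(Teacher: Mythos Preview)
Your proposal has two genuine gaps.

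\textbf{Wrong Fourier component.} You apply $\Gamma_\ast^{2,g-2}\colon \rm H^4_G(A(\CC),\ZZ(2)) \to \rm H^{2g-4}_G(\wh A(\CC),\ZZ(g-2))$ and claim it is an isomorphism inducing isomorphisms on each graded piece $\rm H^p(G,\rm H^{4-p}(A,\ZZ(2))) \xrightarrow{\sim} \rm H^p(G,\rm H^{2g-4-p}(\wh A,\ZZ(g-2)))$. This is false: on graded pieces the Fourier transform sends $\rm H^{4-p}(A)$ to $\rm H^{2g-(4-p)}(\wh A) = \rm H^{2g-4+p}(\wh A)$, not $\rm H^{2g-4-p}(\wh A)$. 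In particular, the graded piece $\rm H^3(G,\rm H^1(A,\ZZ(2)))$ that you need to control is carried by $\mr F_A$ to $\rm H^3(G,\rm H^{2g-1}(\wh A,\ZZ(g+1)))$, which sits in $F^3\rm H^{2g+2}_G(\wh A(\CC),\ZZ(g+1))$, not in degree $2g-4$. The paper therefore uses the component landing in $\rm H^{2g+2}_G(\wh A(\CC),\ZZ(g+1))$, and the punchline is immediate: since $2g+2 > 2\dim(\wh A)$, one has $\rm H^{2g+2}_G(\wh A(\CC),\ZZ(g+1))_0 = 0$ by \cite[\S 2.3.2]{BW20}, so $\rm H^3(G,\rm H^{2g-1}(\wh A,\ZZ(g+1)))_0 = 0$, hence $\rm H^3(G,\rm H^1(A,\ZZ(2)))_0 = 0$. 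No Abel--Jacobi analysis or zero-cycle argument is needed; your ``decisive step'' is aiming at the wrong target and remains too vague to be a proof.

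\textbf{Existence of the algebraic lift $\Gamma$.} Your reduction via Theorem \ref{principalisogeny} only produces a principally polarized abelian variety isogenous to $A$; it does not show that the minimal class $\gamma_\theta$ is algebraic, which is what \cite[Proposition 3.11]{beckmanndegaayfortman} requires to construct $\Gamma$. The paper avoids this by observing that the statement to be proved is purely topological, so one may replace $A(\CC)$ by the $G$-equivariantly homeomorphic product $E_1(\CC)\times\cdots\times E_g(\CC)$ of real elliptic curves (equation (\ref{topologicalstatement})). For such a product the minimal class is trivially algebraic, and then Theorem \ref{grabowski} furnishes the required cycle $\Gamma$.
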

\begin{proof}
Let $g = \dim(A)$. By \cite[Chapter IV, Example 3.1]{vanhamel}, we have that $A(\CC)$ is $G$-equivariantly homeomorphic to the $g$-fold product of copies of the torus $\bb S \times \bb S$, where $\bb S \subset \CC^\ast$ is the unit circle and where $G$ acts on each copy $\bb S \times \bb S$ in one of the following ways: either $\sigma(x,y) = (x, \bar y)$ for $(x,y) \in \bb S \times \bb S$, or $\sigma(x,y) = (y,x)$ for $(x,y) \in \bb S\times \bb S$. 
In particular, there exist $g$ elliptic curves $E_i$ over $\RR$ and a $G$-equivariant homeomorphism 
\begin{align}\label{topologicalstatement}
A(\CC) \cong E_1(\CC) \times \cdots \times E_g(\CC). 
\end{align}
Since the conclusion of Proposition \ref{vanishing} is a statement that only depends on the structure of $A(\CC)$ as a topological $G$-space, we may therefore assume that $A$ is principally polarized by $\theta \in \Hdg^{2}(A(\CC),\ZZ(1))^G$ and that the following class is algebraic:
\[
\frac{\theta^{g-1}}{(g-1)!} \in \Hdg^{2g-2}(A(\CC), \ZZ(g-1))^G.
\]
By Theorem \ref{grabowski}, this means that the Chern character $\ch(\ca P_{A_\CC})$ is algebraic, which allows us to define a homomorphism $\Gamma_\ast \colon \rm H^{2\bullet}_G(A(\CC), \ZZ(\bullet)) \to \rm H^{2\bullet}_G(\wh A(\CC), \ZZ(\bullet))$ as in (\ref{fundamentalcorrespondence}), 
and homomorphisms $\Gamma_\ast^{ij}$ as in (\ref{fundamental-ij}). We have a commutative diagram: 
{\small
\begin{align*}
\xymatrixcolsep{2.6pc}
\xymatrix{
\rm H^4_G(A(\CC),\ZZ(2)) \ar[r]^{\Gamma_\ast^{4,2g+2}} & \rm H^{2g+2}_G(\wh A(\CC),\ZZ(g+1)) \ar[r]^{\Gamma_\ast^{2g+2,4}} & \rm H^4_G(A(\CC),\ZZ(2))  \ \\ 
F^3\rm H^4_G(A(\CC),\ZZ(2))  \ar[r]^{\Gamma_{\ast}^{4,2g+2}}\ar@{^{(}->}[u] \ar[d] & F^3\rm H^{2g+2}_G(\wh A(\CC),\ZZ(g+1)) \ar@{^{(}->}[u] \ar[d] \ar[r]^{\Gamma_\ast^{2g+2,4}} &F^3\rm H^4_G(A(\CC),\ZZ(2))   \ar@{^{(}->}[u] \ar[d]  \\
\rm H^3(G,\rm H^1(A(\CC),\ZZ(2))) \ar[r]^{\rm H^3(G, \mr F_A)\quad\;\;\;}& \rm H^3(G,\rm H^{2g-1}(\wh A(\CC),\ZZ(g+1)))
\ar[r]^{\quad {\scriptsize \rm H^3(G, \mr F_{\wh A})}}
 & \rm H^3(G, \rm H^1(A(\CC),\ZZ(2))). 
}
\end{align*}
}
\noindent
Here $\mr F_A \colon \rm H^1(A(\CC),\ZZ(2)) \to \rm H^{2g-1}(\wh A(\CC),\ZZ(g+1))$ is the Fourier transform considered in Section \ref{subsec:fourier}. This map is an isomorphism, with inverse \[(-1)^{1+g}\cdot\mr F_{\wh A} \colon \rm H^{2g-1}(\wh A(\CC),\ZZ(g+1)) \to \rm H^1(A(\CC),\ZZ(2)),\]
see \cite[Corollary 9.24]{huybrechtsfouriermukai}. Consequently, $\rm H^3(G, \mr F_A)$ is an isomorphism, with inverse $\rm H^3(G, \mr F_{\wh A})$. By the compatibility of $\Gamma_\ast$ with the topological condition (\ref{topologicalcondition}), we obtain an isomorphism
\begin{align}\label{h1h5}
\rm H^3(G, \mr F_A) \colon \rm H^3(G,\rm H^1(A(\CC),\ZZ(2)))_0 \xrightarrow{\sim} \rm H^3(G,\rm H^{2g-1}(\wh A(\CC),\ZZ(g+1)))_0. 
\end{align}
Because $\rm H^{2g+2}_G(\wh A(\CC),\ZZ(g+1))_0 = (0)$ by \cite[\S2.3.2]{BW20}, the group on the right of equation (\ref{h1h5}) vanishes. Therefore, the group on the left must be zero as well. 
\end{proof}
\noindent
In fact, the argument used in the above proposition can be generalized to prove the following lemma, which we record for later use:

\begin{lemma} \label{topologicalcompatibility}
Let $A$ be an abelian variety of dimension $g$ over $\RR$. Let $p$ and $q$ be non-negative integers such that $p + q = 2k \in 2 \ZZ_{\geq 0}$. The isomorphism on group cohomology 
\[
\rm H^p(G, \mr F_A) \colon \rm H^p(G, \rm H^q(A(\CC), \ZZ(k))) \to \rm H^p(G, \rm H^{2g-q}(\wh A(\CC), \ZZ(g+k-q)))
\]
identifies $\rm H^p(G, \rm H^q(A(\CC), \ZZ(k)))_0$ with $\rm H^p(G, \rm H^{2g-q}(\wh A(\CC), \ZZ(g+k-q)))_0$. 
\end{lemma}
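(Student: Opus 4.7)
The plan is to mimic the argument of Proposition \ref{vanishing}. The idea is to lift the Fourier transform $\mr F_A$ on singular cohomology to a correspondence $\Gamma_\ast$ on equivariant cohomology which is known by \cite[Theorem 1.21]{BW20} to preserve the topological condition (\ref{topologicalcondition}), and then pass to the graded piece of the Hochschild-Serre filtration.

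First I would reduce to the case in which $\ch(\ca P_{A_\CC})$ is algebraic. The statement of the lemma depends only on the structure of $A(\CC)$ as a topological $G$-space: $\mr F_A$ is given by cup product with the $G$-invariant class $\ch(\ca P_{A_\CC})$ followed by a push-forward, while the subgroups $(-)_0$ are defined purely in terms of the $G$-space $A(\CC)$ and the locus $A(\RR)$. By (\ref{topologicalstatement}), $A(\CC)$ is $G$-equivariantly homeomorphic to a product of real elliptic curves, for which the minimal class $\theta^{g-1}/(g-1)!$ is algebraic, so Theorem \ref{grabowski} supplies a cycle $\Gamma \in \CH(A \times \wh A)$ with $[\Gamma_\CC] = \ch(\ca P_{A_\CC})$. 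The associated correspondence $\Gamma_\ast$ on equivariant cohomology, defined as in (\ref{fundamentalcorrespondence}), then has a component $\rm H^{2k}_G(A(\CC), \ZZ(k)) \to \rm H^{2(g+p-k)}_G(\wh A(\CC), \ZZ(g+p-k))$ with both total degrees $p+q = 2k$ and $p + (2g-q) = 2(g+p-k)$ even, so that the topological condition (\ref{topologicalcondition}) is meaningful on source and target. By \cite[Theorem 1.21]{BW20}, this component sends $\rm H^{2k}_G(A(\CC), \ZZ(k))_0$ into $\rm H^{2(g+p-k)}_G(\wh A(\CC), \ZZ(g+p-k))_0$.

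Naturality of the Hochschild-Serre spectral sequence (which degenerates on abelian varieties by Section \ref{subsec:hochschild}) implies that $\Gamma_\ast$ is compatible with the filtration $F^\bullet$, and that the map induced on the graded piece $F^p/F^{p+1} \cong \rm H^p(G, \rm H^q(-, \ZZ(k)))$ coincides with $\rm H^p(G, \mr F_A)$, since $\Gamma_\ast$ and $\mr F_A$ are both given by $\pi_{2,\ast}(\ch(\ca P_{A_\CC}) \cdot \pi_1^\ast(-))$ on singular cohomology. Combined with Definition \ref{isogenydef:subgroups}, this shows that $\rm H^p(G, \mr F_A)$ maps $\rm H^p(G, \rm H^q(A(\CC), \ZZ(k)))_0$ into $\rm H^p(G, \rm H^{2g-q}(\wh A(\CC), \ZZ(g+k-q)))_0$. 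The reverse inclusion follows by running the same argument with a cycle $\Gamma' \in \CH(\wh A \times A)$ lifting $\ch(\ca P_{\wh A_\CC})$ and using that $\mr F_{\wh A} \circ \mr F_A$ differs only by a sign from $[-1]_A^\ast$ (\cite[Corollary 9.24]{huybrechtsfouriermukai}), which obviously preserves the subgroups $(-)_0$. The main difficulty is tracking the topological condition across the Hochschild-Serre filtration, but this is essentially bookkeeping once \cite[Theorem 1.21]{BW20} is in hand; the topological reduction to products of real elliptic curves is the ingredient that lets us bypass the fact that a general real abelian variety need not have an algebraic Poincar\'e Chern character.
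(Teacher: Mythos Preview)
Your proposal is correct and follows essentially the same route as the paper: reduce to a product of real elliptic curves via (\ref{topologicalstatement}), use Theorem \ref{grabowski} to obtain an algebraic lift $\Gamma_\ast$ of $\mr F_A$ on equivariant cohomology, invoke \cite[Theorem 1.21]{BW20} to see that $\Gamma_\ast$ preserves the topological condition, and pass to the Hochschild-Serre graded pieces; the reverse inclusion via $\mr F_{\wh A}$ and the inversion formula is exactly what the paper means by ``arguments similar to those used to prove Proposition \ref{vanishing}''. Your write-up is in fact a bit more explicit than the paper's sketch.
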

\begin{proof}
This is a topological statement, so we may and do assume that $A$ is a product of elliptic curves (see (\ref{topologicalstatement})). In this case, we may lift $\rm H^p(G, \mr F_A)$ to an algebraic homomorphism
\[
\Gamma_\ast^{2k, 2g+2k-2q} \colon F^p\rm H^{2k}_G(A(\CC), \ZZ(k)) \to F^p\rm H^{2g+2k-2q}_G(\wh A(\CC), \ZZ(g+k-q))
\]
as above (see Section \ref{subsec:analysis}). We can do the same thing for $\rm H^p(G, \mr F_{\wh A})$; the lemma follows from arguments similar to those used to prove Proposition \ref{vanishing}. 
\end{proof}

\section{Abel-Jacobi maps over the real numbers} \label{section:AbelJacobi}

Let $X$ be a smooth projective variety over $\RR$. The goal of this section is to prove that the equivariant cycle class map and the real Abel-Jacobi map for $X$ are compatible; see Theorem \ref{abeljacobicommutativity} below for the precise statement. To prove this, we need Deligne cohomology and Deligne cycle class maps in the equivariant setting. Recall that Jannsen introduced $G$-equivariant Deligne cycle class maps for varieties over $\RR$ in \cite{janssen-Dhomology}. Since his results do not seem to directly imply Theorem \ref{abeljacobicommutativity}, we will take a somewhat different approach.

 Let $\sigma \colon X(\CC) \to X(\CC)$ be the canonical anti-holomorphic involution. Fix an integer $k$ with $0 \leq k \leq \dim(X)$, and consider the $k$-th intermediate Jacobian of $X_\CC$ \cite{voisin,mixedhodge}:
\begin{align} \label{intermediatejacobian}
\begin{split}
J^{2k-1}(X_\CC) &= \rm H^{2k-1}(X(\CC), \CC)/ \left( \Lambda + F^k\rm H^{2k-1}(X(\CC), \CC)\right), \\
\Lambda &= \rm H^{2k-1}(X(\CC), \ZZ(k))/\bracktorsion. 
\end{split}
\end{align}
Observe that $\sigma$ induces an anti-holomorphic involution $ J^{2k-1}(X_\CC) \to J^{2k-1}(X_\CC)$. Consider the Abel-Jacobi map  $
\rm{AJ}_\CC \colon \CH^k({X_\CC})_{\tn{hom}} \to J^{2k-1}(X_\CC)
$ introduced by Griffiths \cite{griffiths-certainrationalintegrals}. By \cite[\S7]{esnaultviehweg}, the Abel-Jacobi map fits naturally in the following commutative diagram:
\begin{align} \label{voisinsequence}
\xymatrix{
0 \ar[r] & \ca Z^k(X_\CC)_{\tn{hom}} \ar[r] \ar[d]^{\rm{AJ}_\CC} & \ca Z^k(X_\CC) \ar[r] \ar[d]^{cl_{\CC, D}} &  \ca Z^k(X_\CC) /  \ca Z^k(X_\CC)_{\tn{hom}} \ar[r]  \ar[d]^{cl_{\CC}} & 0\\
0 \ar[r]& J^{2k-1}(X_\CC) \ar[r]& \rm H^{2k}_D(X(\CC), \ZZ(k)) \ar[r]& \Hdg^{2k}(X(\CC), \ZZ(k)) \ar[r] & 0.
}
\end{align}
\begin{lemma} \label{lemma:GequivariantAbelJacobi}
With respect to the canonical $G$-actions, each arrow in (\ref{voisinsequence}) is $G$-equivariant. 
\end{lemma}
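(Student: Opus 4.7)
The plan is to establish $G$-equivariance of the diagram (\ref{voisinsequence}) term by term, the main work being the identification of the $G$-action on Deligne cohomology and the verification that the Deligne cycle class map respects this action. First I would fix the $G$-actions: on $\ca Z^k(X_\CC)$ the action is by $\sigma(Z) = \sigma^{-1}(Z)$ (which makes sense because $X$ is defined over $\RR$, so that $\sigma$ carries subvarieties of $X_\CC$ to subvarieties of $X_\CC$), and this preserves $\ca Z^k(X_\CC)_{\tn{hom}}$ because $cl_\CC$ is $G$-equivariant in the standard sense. For the cohomological terms, I would use the fact that the holomorphic Deligne complex
\[
\ZZ(k)_D \;=\; \big[\,\ZZ(k) \to \OO_{X(\CC)} \to \Omega^1_{X(\CC)} \to \cdots \to \Omega^{k-1}_{X(\CC)}\,\big]
\]
carries a natural $G$-equivariant structure: $G$ acts on the constant sheaf $\ZZ(k)$ by $(-1)^k$ as prescribed, and on each $\Omega^p_{X(\CC)}$ by $\alpha \mapsto \overline{\sigma^\ast\alpha}$; these actions commute with the differentials. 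Taking hypercohomology yields a $G$-action on $\rm H^{2k}_D(X(\CC), \ZZ(k))$, and the tautological short exact sequence of complexes of $G$-sheaves gives a $G$-equivariant exact sequence
\[
0 \to J^{2k-1}(X_\CC) \to \rm H^{2k}_D(X(\CC),\ZZ(k)) \to \Hdg^{2k}(X(\CC),\ZZ(k)) \to 0,
\]
proving $G$-equivariance of the bottom horizontal arrows. The top horizontal arrows are $G$-equivariant by definition.

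For the right vertical arrow $cl_\CC$, the $G$-equivariance is the standard statement that the Betti cycle class of a subvariety is compatible with pull-back along automorphisms; specifically, $cl_\CC(\sigma(Z)) = \sigma^\ast cl_\CC(Z)$ in $\rm H^{2k}(X(\CC), \ZZ(k))$, where the $\ZZ(k)$-twist accounts for the orientation behaviour of $\sigma$ on $2k$-dimensional normal bundles. For the middle vertical arrow $cl_{\CC,D}$, one may construct it via local cohomology with supports using the Deligne complex, and observe that each step (the local cycle class, the forget-supports map, the identification with $\rm H^{2k}_D$) is built from the $G$-equivariant sheaf complex $\ZZ(k)_D$ together with operations (Gysin, restriction to an open) that commute with pull-back along $\sigma$. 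Finally, the left vertical arrow $\rm{AJ}_\CC$ is $G$-equivariant as the restriction of $cl_{\CC, D}$ to the subgroup $\ca Z^k(X_\CC)_{\tn{hom}}$, using that the left column in (\ref{voisinsequence}) is the kernel of the map to $\Hdg^{2k}(X(\CC), \ZZ(k))$ and everything in sight is $G$-equivariant.

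The hard part will be the precise verification that the Deligne cycle class map $cl_{\CC,D}$ commutes with $\sigma$; the naive statement of ``naturality'' does not apply directly because $\sigma$ is anti-holomorphic rather than a morphism of complex varieties. I would handle this by working with an explicit model of $\rm H^{2k}_D$ (e.g.\ via $C^\infty$ currents, or via a cone construction with the holomorphic de Rham complex) in which the $G$-action is built in by combining pull-back along $\sigma$ with complex conjugation on coefficients, and then checking that the standard construction of $cl_{\CC,D}$ (as a current of integration over a real cycle, twisted appropriately) manifestly intertwines the actions. An alternative, if one prefers to cite prior work, would be to invoke Jannsen's $G$-equivariant Deligne cycle class map from \cite{janssen-Dhomology} and recover the statement by composing with the forgetful map $\rm H^{2k}_{G,D}(X(\CC),\ZZ(k)) \to \rm H^{2k}_D(X(\CC),\ZZ(k))$.
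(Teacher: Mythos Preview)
Your proposal is correct and follows essentially the same route as the paper: reduce everything to the $G$-equivariance of $cl_{\CC,D}$, and establish the latter via local cohomology with supports in $Y$. The paper makes your sketch precise by using the fibre-product description $\rm H^{2k}_{Y(\CC)}(X(\CC),\ZZ_D(k)) = \rm H^{2k}_{Y(\CC)}(X(\CC),\ZZ(k)) \times_{\rm H^{2k}_{Y(\CC)}(X(\CC),\CC)} \rm H^{2k}_{Y(\CC)}(X(\CC),F^k\Omega^\bullet_{X(\CC)})$ together with the uniqueness of the Thom--Deligne class $\tau_{\tn{Del}}(Y)$, so that one only has to check $\sigma^\ast(\tau(Y)) = \tau(Y^\sigma)$ and $\sigma^\ast(\tau_{\Hdg}(Y)) = \tau_{\Hdg}(Y^\sigma)$ separately---this is exactly the ``local cycle class'' step you allude to, and it avoids any need for currents or an appeal to Jannsen.
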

\begin{proof}
The $G$-equivariance of the horizontal homomorphisms in (\ref{voisinsequence}) is straightforward, and the fact that the vertical cycle class map on the right is $G$-equivariant is classical. To prove the lemma, it suffices to show that Deligne cycle class map $cl_{\CC, D}$ is $G$-equivariant. 
For this, consider a codimension $k$ subvariety $Y \subset X_\CC$. By \cite[p.172]{mixedhodge}, Deligne cohomology with supports in $Y(\CC)$ can be expressed as a fibre product 
\begin{align*}
\rm H^{2k}_{Y(\CC)}(X(\CC), \ZZ_D(k)) = \rm H^{2k}_{Y(\CC)}(X(\CC), \ZZ(k)) \times_{\rm H^{2k}_{Y(\CC)}(X(\CC), \CC)} \rm H^{2k}_{Y(\CC)}(X(\CC), F^k\Omega^\bullet_{X(\CC)}). 
\end{align*}
There is a unique class $\tau_{\tn{Del}}(Y) \in \rm H^{2k}_{Y(\CC)}(X(\CC), \ZZ_D(k))$ that lifts both the Thom class $\tau(Y)$ in $\rm H^{2k}_{Y(\CC)}(X(\CC), \ZZ(k))$ and the Thom-Hodge class $\tau_{\Hdg}(Y) \in \rm H^{2k}_{Y(\CC)}(X(\CC), F^k\Omega^\bullet_{X(\CC)})$, see \cite[Proposition 7.19]{mixedhodge}. 
The equality $\sigma^\ast(\tau(Y)) = \tau(Y^\sigma)$ is classical. It is also clear that 
$\sigma^\ast(\tau_{\Hdg}(Y)) = \tau_{\Hdg}(Y^\sigma)$; see the construction of $\tau_\Hdg(Y)$ in the proof of Theorem 2.38 in \cite{mixedhodge}. The lemma follows. 
\end{proof}

\noindent
By Lemma \ref{lemma:GequivariantAbelJacobi}, we obtain a \emph{real Abel-Jacobi map} 
\begin{align} \label{real-abel}
\tn{AJ} \colon \;  \CH^k(X)_{\tn{hom}} \to J^{2k-1}(X_\CC)^G. 
\end{align}
The boundary map in group cohomology induced by the exact sequence of $G$-modules
\begin{align} \label{groupcohomology}
0 \to \Lambda \to \rm H^{2k-1}(X(\CC),\RR(k)) \to J^{2k-1}(X_\CC) \to 0
\end{align}
induces an isomorphism between $\pi_0(J^{2k-1}(X_\CC)^G)$, the group of connected components of the real locus of $J^{2k-1}(X_\CC)$, and $\rm H^1(G, \Lambda)$. The spectral sequence (\ref{hochschild}) 
gives a filtration $F^\bullet$ on $\rm H^{2k}_G(X(\CC), \ZZ(k))$, together with natural maps $F^1\rm H^{2k}_G(X(\CC), \ZZ(k)) \to \rm H^1(G, \Lambda)$ 
and $\rm H^{2k}_G(X(\CC), \ZZ(k))  \to \rm H^{2k}(X(\CC), \ZZ(k))^G$, such that 
$cl \colon \CH^k(X)_{\tn{hom}} \to \rm H^{2k}_G(X(\CC), \ZZ(k))$ factors through $F^1\rm H^{2k}_G(X(\CC), \ZZ(k)) \subset \rm H^{2k}_G(X(\CC), \ZZ(k))$. 


\begin{theorem} \label{abeljacobicommutativity}
Let $X$ be a smooth projective variety over $\RR$. Let $k$ be an integer with $0 \leq k \leq \dim(X)$, and $\Lambda = \rm H^{2k-1}(X(\CC), \ZZ(k))/\bracktorsion$.  
The following diagram commutes:
\[
\xymatrixcolsep{3pc}
\xymatrix{
\CH^k(X)_{\tn{hom}} \ar[d]^{\tn{AJ}} \ar[r]^{cl\hspace{8mm}} & F^1\rm H^{2k}_G(X(\CC),\ZZ(k)) \ar[d]  \\
J^{2k-1}(X_\CC)^G \ar[r] & \rm H^1(G, \Lambda). 
}
\]
\end{theorem}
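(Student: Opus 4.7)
The plan is to factor both the equivariant cycle class map $cl$ and the real Abel--Jacobi map $\tn{AJ}$ through a $G$-equivariant Deligne cohomology group $\rm H^{2k}_{G,D}(X(\CC),\ZZ(k))$, after which the commutativity becomes a diagram chase inside a single Hochschild--Serre spectral sequence. By Lemma \ref{lemma:GequivariantAbelJacobi}, the Deligne complex on $X(\CC)$ and the maps appearing in (\ref{voisinsequence}) are all $G$-equivariant, so such a group may be defined either as the hypercohomology of the Deligne complex in the category of $G$-sheaves, or as the cohomology of the totalization of $C^\bullet(G, R\Gamma(X(\CC), \ZZ(k)_{\ca D}))$. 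It carries a cycle class map $cl_{G,D} \colon \CH^k(X) \to \rm H^{2k}_{G,D}(X(\CC),\ZZ(k))$ together with natural forgetful morphisms to $\rm H^{2k}_G(X(\CC),\ZZ(k))$ and to $\rm H^{2k}_D(X(\CC),\ZZ(k))^G$, whose compositions with $cl_{G,D}$ recover $cl$ and the $G$-invariant Deligne cycle class map of (\ref{voisinsequence}).

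Next, I would establish a $G$-equivariant analog of (\ref{voisinsequence}). For a homologically trivial real cycle $Z$, this forces $cl_{G,D}(Z)$ to lift uniquely to an element of $J^{2k-1}(X_\CC)^G$ which, by Lemma \ref{lemma:GequivariantAbelJacobi}, coincides with $\tn{AJ}(Z)$. The key auxiliary input here is the vanishing $\rm H^p(G, \rm H^{2k-1}(X(\CC),\RR(k))) = 0$ for $p \geq 1$, a consequence of unique $2$-divisibility of $\RR$-vector spaces, which ensures that the relevant $G$-cohomology long exact sequences degenerate so as to produce a well-defined lift.

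Finally, I would identify the two natural edge maps living inside $\rm H^{2k}_{G,D}$: on one side, the image of $cl(Z) \in F^1\rm H^{2k}_G(X(\CC),\ZZ(k))$ in $\rm H^1(G, \rm H^{2k-1}(X(\CC),\ZZ(k)))$, projected modulo torsion to $\rm H^1(G,\Lambda)$; on the other, the image of $\tn{AJ}(Z) \in J^{2k-1}(X_\CC)^G$ under the connecting homomorphism associated to (\ref{groupcohomology}). Both should arise from the same $d_1$-differential in the Hochschild--Serre spectral sequence of the double complex above, and so coincide after the projection $\rm H^{2k-1}(X(\CC),\ZZ(k)) \twoheadrightarrow \Lambda$. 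The main obstacle will be the cochain-level bookkeeping needed to verify that the two edge maps really do come from the same differential, and to pin down sign conventions relating the Deligne complex to the Borel construction computing $\rm H^*_G$; a more pedestrian alternative, bypassing equivariant Deligne cohomology entirely, would be to represent $\tn{AJ}(Z)$ by a topological chain $W$ with $\partial W = Z_\CC$ and read off both classes in $\rm H^1(G,\Lambda)$ directly from the cocycle $\sigma \mapsto W - \sigma(W)$ modulo $\Lambda$.
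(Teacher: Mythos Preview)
Your proposal is correct and follows essentially the same route as the paper: introduce $G$-equivariant Deligne cohomology $\rm H^{2k}_{G,D}(X(\CC),\ZZ(k))$, lift the cycle class map there (the paper does this via the fibre-product description of Deligne cohomology with supports and the equivariant Thom class), and then use the Hochschild--Serre filtration together with the $G$-equivariance established in Lemma~\ref{lemma:GequivariantAbelJacobi} to compare the two routes to $\rm H^1(G,\Lambda)$. The paper packages the ``$G$-equivariant analog of (\ref{voisinsequence})'' slightly differently---it introduces an auxiliary group $J^{2k-1}_G(X)=\rm H^{2k-1}_G(X(\CC),\RR/\ZZ(k))$ and shows via the sequence $0\to\ZZ_D(k)\to\RR_D(k)\to\RR/\ZZ\to 0$ that this embeds in $\rm H^{2k}_{G,D}$---but this is exactly the mechanism you outline, and your key vanishing input $\rm H^p(G,\RR(k))=0$ for $p\geq 1$ is what makes that embedding work.
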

\noindent
To prove Theorem \ref{abeljacobicommutativity}, we need some notation. For a subring $R \subset \RR$ and an integer $d \in \ZZ_{\geq 0}$, define $R_D(d)$ as the complex 
\begin{align}\label{eq:gsequence}
0 \to R(d) \to \OO_{X(\CC)} \to \Omega_{X(\CC)}^1 \to \cdots \to \Omega_{X(\CC)}^{d-1}  \to 0. 
\end{align}
By definition, $\rm H^m_D(X(\CC), R(d))  \coloneqq \rm H^m(X(\CC), R_D(d))$ is the degree $m$ Deligne cohomology group with coefficients in $R(d)$ \cite{voisin, mixedhodge}. Note that $G$ acts on the complex (\ref{eq:gsequence}). 
\begin{definition}
For an integer $m \in \ZZ_{\geq 0}$, define the \emph{$G$-equivariant Deligne cohomology group of degree $m$ with coefficients in $R(d)$} as $
\rm H^m_{G,D}(X(\CC), R(d))  =
\rm H^m_G(X(\CC), R_D(d))$. Define the \emph{$k$-th equivariant intermediate Jacobian} as $J^{2k-1}_G(X) = \rm H^{2k-1}_G(X(\CC), \RR/\ZZ(k)).$
\end{definition}
\noindent
The exact sequence of $G$-complexes $
0 \to \Omega_{X(\CC)}^{\leq k-1}[-1] \to \ZZ_D(k) \to \ZZ(k) \to 0$ induces a natural homomorphism $
\rm H^{2k}_{G,D}(X(\CC), \ZZ(k)) \to \rm H^{2k}_{G}(X(\CC), \ZZ(k))$, and we have:

\begin{proposition} \label{prop:prop:prop}
There is a natural cycle class map $ \varphi \colon \ca Z^k(X) \to \rm H^{2k}_{G,D}(X(\CC),\ZZ(k))$ such that the following diagram commutes:
\[
\xymatrix{
\ca Z^k(X) \ar@/^2pc/[rr]^{cl} \ar[d] \ar[r]^{\varphi \hspace{9mm}} & \rm H^{2k}_{G,D}(X(\CC), \ZZ(k)) \ar[r] \ar[d] & \rm H^{2k}_G(X(\CC), \ZZ(k)) \ar[d] \\
\ca Z^k(X_\CC)\ar[r]\ar@/_2pc/[rr]_{cl_\CC}  & \rm H^{2k}_D(X(\CC), \ZZ(k)) \ar[r] & \rm H^{2k}(X(\CC), \ZZ(k)). 
}
\]
\end{proposition}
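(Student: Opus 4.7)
The plan is to construct $\varphi$ by lifting the Thom class construction that defines the complex Deligne cycle class map to the $G$-equivariant setting, using the compatibility with the involution $\sigma$ established in Lemma~\ref{lemma:GequivariantAbelJacobi}. Concretely, for a real subvariety $Z \subset X$ of codimension $k$, the closed subset $Z(\CC) \subset X(\CC)$ is $G$-stable, so one can form the $G$-equivariant cohomology with supports
\[
\rm H^{2k}_{G,Z(\CC)}(X(\CC), \ZZ(k)), \qquad \rm H^{2k}_{G,Z(\CC),D}(X(\CC), \ZZ(k))
\]
by applying the $G$-equivariant derived functor of global sections on $G$-sheaves to the Koszul-type complexes computing cohomology with supports in $Z(\CC)$. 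There are natural forgetful maps to the non-equivariant cohomologies with supports, and natural push-forward maps to the cohomology of $X(\CC)$ obtained from the morphism of sheaves forgetting supports.

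The first step is to produce, for each such $Z$, a canonical $G$-equivariant Deligne Thom class
\[
\tau_{G,D}(Z) \in \rm H^{2k}_{G, Z(\CC), D}(X(\CC), \ZZ(k)).
\]
For this I would $G$-equivariantize the fibre product description of \cite[p.172]{mixedhodge} recalled in the proof of Lemma~\ref{lemma:GequivariantAbelJacobi}: the equivariant Deligne cohomology with supports in $Z(\CC)$ fits into a fibre product of $\rm H^{2k}_{G,Z(\CC)}(X(\CC), \ZZ(k))$ and $\rm H^{2k}_{G,Z(\CC)}(X(\CC), F^k\Omega^\bullet_{X(\CC)})$ over $\rm H^{2k}_{G,Z(\CC)}(X(\CC), \CC)$, since the relevant exact triangle in $G$-equivariant derived categories still yields a Mayer--Vietoris square (the Hochschild--Serre spectral sequences of the three terms are compatible). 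The classical Thom class $\tau(Z_\CC)$ and its Hodge lift $\tau_{\Hdg}(Z_\CC)$ are both $G$-invariant by Lemma~\ref{lemma:GequivariantAbelJacobi}, so they lift canonically to $G$-equivariant classes $\tau_G(Z)$ and $\tau_{G,\Hdg}(Z)$ (one verifies that the obstruction in $\rm H^1(G, \rm H^{2k-1}_{Z(\CC)}(X(\CC), -))$ vanishes because the invariants of the corresponding non-equivariant classes already exist). Pairing them in the equivariant fibre product yields $\tau_{G,D}(Z)$, which is unique up to a controlled ambiguity that we show vanishes exactly as in \cite[Proposition 7.19]{mixedhodge}. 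We then set
\[
\varphi([Z]) := \text{push-forward of } \tau_{G,D}(Z) \text{ to } \rm H^{2k}_{G,D}(X(\CC), \ZZ(k))
\]
and extend by linearity to all of $\ca Z^k(X)$.

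The commutativity of the two squares in the diagram is then formal, and this is the part I expect to be routine rather than the main obstacle. Indeed, the right square (equivariant Deligne to equivariant Betti, and the non-equivariant analogue) commutes because the morphism of $G$-complexes $\ZZ_D(k) \to \ZZ(k)$ sends $\tau_{G,D}(Z)$ to $\tau_G(Z)$, whose image in $\rm H^{2k}_G(X(\CC), \ZZ(k))$ is by definition the equivariant cycle class $[Z]$ of \cite[Theorem~1.18]{BW20}. The left square commutes because the natural forgetful morphisms from $G$-equivariant to non-equivariant cohomology with supports, and from $G$-equivariant to non-equivariant Deligne cohomology, send $\tau_{G,D}(Z)$ to $\tau_D(Z_\CC)$, by the very way we lifted the Thom and Thom-Hodge classes through the fibre product.

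The main technical obstacle is the construction of the equivariant fibre-product decomposition of $\rm H^{2k}_{G,Z(\CC),D}(X(\CC),\ZZ(k))$ and the proof that the $G$-invariant Thom classes lift uniquely through it. This requires carefully tracking the filtered $G$-equivariant derived category of sheaves on $X(\CC)$, in particular verifying that the Hochschild--Serre spectral sequences for the three terms in the fibre-product defining Deligne cohomology with supports are compatible and degenerate sufficiently in the relevant range of degrees. Once this is granted, everything else reduces to formal manipulations with Thom classes and exact sequences of $G$-complexes.
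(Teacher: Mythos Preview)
Your strategy is the same as the paper's: construct an equivariant Thom--Deligne class via a fibre product description of $\rm H^{2k}_{G,Z(\CC)}(X(\CC),\ZZ_D(k))$ and then forget supports. The differences lie in how the ``main technical obstacle'' you flag is actually dispatched.

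The paper avoids any spectral sequence analysis by two clean observations you are missing. First, for the complexes $\Omega^\bullet_{X(\CC)}$ and $F^k\Omega^\bullet_{X(\CC)}$ (sheaves of $\RR$- or $\CC$-vector spaces), $G$-equivariant cohomology with supports coincides with the $G$-invariants of ordinary cohomology with supports, because $\va{G}=2$ is invertible. So the fibre product you want is
\[
\rm H^{2k}_{G,Z(\CC)}(X(\CC),\ZZ(k)) \;\times_{\rm H^{2k}_{Z(\CC)}(X(\CC),\CC)^G}\; \rm H^{2k}_{Z(\CC)}(X(\CC),F^k\Omega^\bullet_{X(\CC)})^G,
\]
with no need to lift $\tau_{\Hdg}$ along a Hochschild--Serre filtration. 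Second, the reason the long exact sequence collapses to a fibre product is simply that $\rm H^{2k-1}_{Z(\CC)}(X(\CC),\CC)=0$ by Poincar\'e--Lefschetz duality (it computes Borel--Moore homology of $Z(\CC)$ in a degree above the real dimension). This is the actual content replacing your vague ``obstruction in $\rm H^1(G,\rm H^{2k-1}_{Z(\CC)}(\dots))$ vanishes''; the whole non-equivariant group is zero, not just some obstruction class.

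Finally, the equivariant class in $\rm H^{2k}_{G,Z(\CC)}(X(\CC),\ZZ(k))$ is not obtained by lifting a $G$-invariant Thom class through a spectral sequence; the paper uses the equivariant purity isomorphism $\rm H^{2k}_{G,Z(\CC)}(X(\CC),\ZZ(k))\cong\ZZ$ from \cite[(1.54)]{BW20} directly. With these three inputs in hand, the uniqueness of $[Z]_D$ and the commutativity of the diagram are immediate, exactly as you say.
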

\begin{proof}
We adapt the proof of Proposition 7.19 in \cite{mixedhodge}. One has a canonical quasi-isomorphism 
$
R_D(k) \simeq \tn{Cone}^\bullet\left(
\epsilon^k - \iota^k \colon R(k) \oplus F^k\Omega_{X(\CC)}^\bullet \to \Omega_{X(\CC)}^\bullet
\right)[-1]$ for any ring $R \subset \RR$, see \cite[Lemma-Definition 7.13]{mixedhodge}. Thus, by \cite[(A-12)]{mixedhodge}, there is an exact sequence
\begin{align}\label{cone}
\Omega_{X(\CC)}^\bullet \to \ZZ_D(k)[1] \to \ZZ(k)[1] \oplus F^k\Omega_{X(\CC)}^\bullet[1]. 
\end{align}
The complexes and morphisms in (\ref{cone}) are all $G$-equivariant. Let $Y \subset X$ be a codimension $k$ subvariety over $\RR$. Taking $G$-equivariant cohomology with supports in $Y$ of the sequence (\ref{cone}) gives a long exact sequence 
\begin{align*}
\cdots \to & \rm H^{2k-1}_{G,Y(\CC)}(X(\CC), \Omega_{X(\CC)}^\bullet) \to  \rm H^{2k}_{G,Y(\CC)}(X(\CC), \ZZ_D(k))
  \\
 \to & \rm H^{2k}_{G,Y(\CC)}(X(\CC), \ZZ(k)) \oplus \rm H^{2k}_{G,Y(\CC)}(X(\CC), F^k\Omega_{X(\CC)}^\bullet)\to 
\rm H^{2k}_{G,Y(\CC)}(X(\CC), \Omega_{X(\CC)}^\bullet) \to \cdots 
\end{align*}
which is canonically isomorphic to the long exact sequence
\begin{align*}
\cdots \to & \rm H^{2k-1}_{Y(\CC)}(X(\CC), \CC)^G \to  \rm H^{2k}_{G,Y(\CC)}(X(\CC), \ZZ_D(k))
  \\
 \to & \rm H^{2k}_{G,Y(\CC)}(X(\CC), \ZZ(k)) \oplus \rm H^{2k}_{Y(\CC)}(X(\CC), F^k\Omega_{X(\CC)}^\bullet)^G\to 
\rm H^{2k}_{Y(\CC)}(X(\CC), \CC)^G \to \cdots. 
\end{align*}
The group $\rm H^{2k-1}_{Y(\CC)}(X(\CC), \CC)$ vanishes because by Poincar\'e-Lefschetz duality (see \cite[Section 19.1]{fultonintersection} or \cite[Theorem B.28]{mixedhodge}), one has
$
\rm H^{2k-1}_{Y(\CC)}(X(\CC), \CC) 
\cong \rm H^{\tn{BM}}_{2n-(2k-1)}(Y(\CC), \CC) = (0).
$
We conclude that $\rm H^{2k}_{G,Y(\CC)}(X(\CC), \ZZ_D(k))$ can be expressed as a fibre product
\[
\rm H^{2k}_{G,Y(\CC)}(X(\CC), \ZZ_D(k)) = \rm H^{2k}_{G,Y(\CC)}(X(\CC), \ZZ(k)) \times_{\rm H^{2k}_{Y(\CC)}(X(\CC), \CC)^G} \rm H^{2k}_{Y(\CC)}(X(\CC), F^k\Omega_{X(\CC)}^\bullet)^G. 
\]
Let $[Y] \in \rm H^{2k}_{G, Y(\CC)}(X(\CC), \ZZ(k))$ be the image of $1 \in \ZZ$ under the canonical isomorphism $\rm H^{2k}_{G, Y(\CC)}(X(\CC), \ZZ(k)) = \ZZ$, see \cite[equation (1.54)]{BW20}. 
Then $[Y]$ maps to the Thom class $\tau(Y)$ under 
$
\rm H^{2k}_{G, Y(\CC)}(X(\CC), \ZZ(k)) \to \rm H^{2k}_{G, Y(\CC)}(X(\CC), \CC) = \rm H^{2k}_{Y(\CC)}(X(\CC), \CC)^G,
$
so there is a unique class $[Y]_D \in \rm H^{2k}_{G,Y(\CC)}(X(\CC), \ZZ_D(k))$ that maps to $[Y]$ in the group $\rm H^{2k}_{G, Y(\CC)}(X(\CC), \ZZ(k))$ and to $\tau_{\tn{Hdg}}(Y)$ in the group $\rm H^{2k}_{Y(\CC)}(X(\CC), F^k\Omega^\bullet_{X(\CC)})$. Upon forgetting the support, we obtain an equivariant Deligne cycle class $[Y]_D \in \rm H^{2k}_{G,D}(X(\CC), \ZZ(k)$. 
\end{proof}
\noindent
The natural isomorphism $J^{2k-1}(X_\CC) = \rm H^{2k-1}(X(\CC), \RR/\ZZ(k))$ of $G$-modules gives a homomorphism $J^{2k-1}_G(X) \to J^{2k-1}(X_\CC)^G$. 
Consider the short exact sequence of $G$-complexes
\begin{align}\label{gsh}
0 \to \ZZ_D(k) \to \RR_D(k) \to \RR/\ZZ \to 0.
\end{align}

\begin{lemma} \label{lemma:bigdiagram}
Sequence (\ref{gsh}) induces the following commutative diagram with exact rows:
\begin{align} \label{bigdiagram}
\scriptsize{
\xymatrixcolsep{0.0005pc}
\xymatrix{
 0 \ar[r] &
 J_G^{2k-1}(X) \ar[rr]\ar[dl] \ar[dd]&&
\rm H^{2k}_{G,D}(X(\CC),\ZZ(k)) \ar[dl]\ar[rr]\ar[dd] &&
\rm H^{2k}_{G,D}(X(\CC), \RR(k))\ar[dd]^{\wr} \\
 F^1\rm H^{2k}_G(X(\CC),\ZZ(k)) \ar[rr] \ar@/_1pc/[dd] &&\Hdg_G^{2k}(X(\CC), \ZZ(k)) \ar[rr]&&\Hdg^{2k}(X(\CC), \ZZ(k))^G\ar@{_{(}->}[dr]& \\
0 \ar[r] & J^{2k-1}(X_\CC)^G\ar[dl]  \ar[rr] && 
\rm H^{2k}_D(X(\CC), \ZZ(k))^G\ar[ur] \ar[rr] &&
\rm H^{2k}_D(X(\CC), \RR(k))^G.  \\
\rm H^1(G, \Lambda) &&&&&
}}
\end{align}
\normalsize
\end{lemma}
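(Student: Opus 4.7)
The plan is to assemble the diagram from three exact sequences: the long exact sequence in $G$-equivariant hypercohomology attached to (\ref{gsh}) (top row), its non-equivariant counterpart combined with the standard Deligne presentation (\ref{voisinsequence}) after taking $G$-invariants (bottom row), and the middle row interpolating between them via the natural forgetful and Hochschild-Serre edge maps. I would verify the four main ingredients in turn.

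First, the right vertical isomorphism: each term of $\RR_D(k)$ is a sheaf of $\RR$-vector spaces, so its hypercohomology is uniquely $2$-divisible and $\rm H^p(G,-)$ vanishes on it for $p > 0$; the Hochschild-Serre spectral sequence degenerates and gives $\rm H^{2k}_{G,D}(X(\CC), \RR(k)) \xrightarrow{\sim} \rm H^{2k}_D(X(\CC), \RR(k))^G$. Second, the top row: applying $\rm H^\bullet_G(X(\CC), -)$ to (\ref{gsh}) and invoking the definition $J_G^{2k-1}(X) = \rm H^{2k-1}_G(X(\CC), \RR/\ZZ(k))$ yields a long exact sequence containing the top row, so middle exactness is automatic. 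To obtain the leading zero I would show that the preceding arrow $\rm H^{2k-1}_{G,D}(X(\CC), \RR(k)) \to J_G^{2k-1}(X)$ vanishes. By the first step this reduces to vanishing of the non-equivariant map $\rm H^{2k-1}_D(X(\CC), \RR(k)) \to \rm H^{2k-1}(X(\CC), \RR/\ZZ(k))$, and since $\RR_D(k) \to \RR/\ZZ(k)$ factors through the truncation $\RR_D(k) \to \tau^{\leq 0}\RR_D(k) = \RR(k)$, it suffices to prove that $\rm H^{2k-1}_D(X(\CC), \RR(k)) \to \rm H^{2k-1}(X(\CC), \RR(k))$ is zero. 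The triangle $\Omega^{\leq k-1}_{X(\CC)}[-1] \to \RR_D(k) \to \RR(k)$ identifies its image with the kernel of $\rm H^{2k-1}(X(\CC), \RR(k)) \to \rm H^{2k-1}(X(\CC), \CC)/F^k$, and in odd degree the decomposition $\rm H^{2k-1}(X(\CC), \CC) = F^k \oplus \overline{F^k}$ forces the projection onto the quotient to restrict to an isomorphism on $\rm H^{2k-1}(X(\CC), \RR(k))$, so the kernel is zero.

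Third, the bottom row: running the same argument without the $G$-action produces an injection $\rm H^{2k-1}(X(\CC), \RR/\ZZ(k)) \hookrightarrow \rm H^{2k}_D(X(\CC), \ZZ(k))$ onto $\ker(\rm H^{2k}_D(X(\CC), \ZZ(k)) \to \rm H^{2k}_D(X(\CC), \RR(k)))$. The same Hodge argument (applied one degree up) shows that $\rm H^{2k}_D(X(\CC), \RR(k)) \hookrightarrow \rm H^{2k}(X(\CC), \RR(k))$, which combined with (\ref{voisinsequence}) identifies this kernel with $J^{2k-1}(X_\CC) \subset \rm H^{2k}_D(X(\CC), \ZZ(k))$; the bottom row then follows by left-exactness of $(-)^G$. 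Fourth, the middle row and diagonals are the evident naturality maps: $\ZZ_D(k) \to \ZZ(k)$ yields $\rm H^{2k}_{G,D} \to \Hdg_G^{2k}$ and $\rm H^{2k}_D \to \Hdg^{2k}$; the forgetful map gives $\Hdg_G^{2k} \to \Hdg^{2k}(X(\CC), \ZZ(k))^G$; the Hochschild-Serre edge map gives $F^1\rm H^{2k}_G \to \rm H^1(G, \Lambda)$; and $J_G^{2k-1}(X) \to F^1 \rm H^{2k}_G(X(\CC), \ZZ(k))$ is induced by the composition $J_G^{2k-1}(X) \to \rm H^{2k}_{G,D}(X(\CC), \ZZ(k)) \to \rm H^{2k}_G(X(\CC), \ZZ(k))$, which lands in $F^1$ because its image dies in $\rm H^{2k}(X(\CC), \RR(k))^G$ by the exactness of the top row. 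Every square then commutes by naturality of the long exact sequences and of Hochschild-Serre, together with Lemma \ref{lemma:GequivariantAbelJacobi} ensuring $G$-equivariance of (\ref{voisinsequence}).

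The main obstacle I anticipate is the careful identification of $\ker(\rm H^{2k}_D(X(\CC), \ZZ(k))^G \to \rm H^{2k}_D(X(\CC), \RR(k))^G)$ with $J^{2k-1}(X_\CC)^G$ rather than with the a priori larger $\rm H^{2k-1}(X(\CC), \RR/\ZZ(k))^G$ produced directly by (\ref{gsh}); this requires matching the boundary map from (\ref{gsh}) with the standard presentation of $J^{2k-1}(X_\CC) \hookrightarrow \rm H^{2k}_D(X(\CC), \ZZ(k))$ via the triangle $\Omega^{\leq k-1}[-1] \to \ZZ_D(k) \to \ZZ(k)$, and controlling the torsion contribution in the Bockstein sequence $0 \to J^{2k-1}(X_\CC) \to \rm H^{2k-1}(X(\CC), \RR/\ZZ(k)) \to \rm H^{2k}(X(\CC), \ZZ(k))_{\tors} \to 0$ after taking $G$-invariants.
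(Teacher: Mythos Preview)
Your approach is essentially the same as the paper's: both prove the right vertical isomorphism via the $\RR$-vector space structure of $\RR_D(k)$, establish injectivity on the left of the top row by showing $\rm H^{2k-1}_{G,D}(X(\CC),\RR(k)) \to J_G^{2k-1}(X)$ vanishes (reducing to the non-equivariant map $g$ and using the Hodge decomposition in odd degree), and assemble the rest by naturality.

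One point deserves a small correction. Your argument that the composite $J_G^{2k-1}(X) \to \rm H^{2k}_{G,D}(X(\CC),\ZZ(k)) \to \rm H^{2k}_G(X(\CC),\ZZ(k))$ lands in $F^1$ ``because its image dies in $\rm H^{2k}(X(\CC),\RR(k))^G$'' is not quite enough: the kernel of $\rm H^{2k}_G(X(\CC),\ZZ(k)) \to \rm H^{2k}(X(\CC),\RR(k))^G$ is in general strictly larger than $F^1$, since it also contains preimages of $G$-invariant torsion in $\rm H^{2k}(X(\CC),\ZZ(k))$. The paper handles this more cleanly by observing that $J_G^{2k-1}(X) \to \rm H^{2k}_{G,D}(X(\CC),\ZZ(k))$ is a boundary map for a short exact sequence of $G$-complexes, hence shifts the Hochschild--Serre filtration by one; composing with the filtration-preserving map to $\rm H^{2k}_G(X(\CC),\ZZ(k))$ then lands in $F^1$ directly. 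This also yields, by the same filtration-shift reasoning applied to the edge maps, the commutativity of the square relating $J_G^{2k-1}(X) \to F^1\rm H^{2k}_G$ with $J^{2k-1}(X_\CC)^G \to \rm H^1(G,\Lambda)$, which you leave to ``naturality of Hochschild--Serre'' without further detail. Your flagged obstacle about torsion in the Bockstein identification is a real subtlety, but note that the paper simply asserts the isomorphism $J^{2k-1}(X_\CC) = \rm H^{2k-1}(X(\CC),\RR/\ZZ(k))$ as a $G$-module identification prior to the lemma; if you want to be fully careful there you are going beyond what the paper spells out.
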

\begin{proof}
The morphism
$
\rm H^{2k}_D(X(\CC), \ZZ(k)) \to \rm H^{2k}_D(X(\CC), \RR(k))
$
factors through an embedding 
$
\Hdg^{2k}(X(\CC), \ZZ(k)) \hookrightarrow \rm H^{2k}_D(X(\CC), \RR(k))$, which gives the triangle on the bottom right of (\ref{bigdiagram}). Moreover, the vertical arrow on the right of (\ref{bigdiagram}) is an isomorphism because $\RR_D(k)$ is a $G$-complex of $\RR$-vector spaces. 

Next, we show that the boundary map $ J_G^{2k-1}(X) \to \rm H^{2k}_{G,D}(X(\CC), \ZZ(k))$ induced by (\ref{gsh}) is injective. This follows from the fact that the canonical homomorphism 
\[
f \colon \rm H^{2k-1}_{G,D}(X(\CC), \RR(k)) \to J^{2k-1}_G(X)
\]
is zero. To prove this, note that $f$ factors as 
\[
{\small
\xymatrix{
\rm H^{2k-1}_{G,D}(X(\CC), \RR(k))\ar@/^2pc/[rrr]^{f}  \ar[r] \ar[d]^{\wr} & \rm H^{2k-1}_G(X(\CC), \RR(k))\ar[d]^{\wr}  \ar[r] & \rm H^{2k-1}_G(X(\CC), \RR/\ZZ(k))  \ar@{=}[r] & J^{2k-1}_G(X) \\
\rm H^{2k-1}_D(X(\CC), \RR(k))^G \ar[r]^g & \rm H^{2k-1}(X(\CC), \RR(k))^G, & &
}
}
\]
so that it suffices to show that the map $g \colon \rm H^{2k-1}_D(X(\CC), \RR(k)) \to \rm H^{2k-1}(X(\CC), \RR(k))$ is zero. This holds, because the exact sequence $
0 \to \Omega_{X(\CC)}^{\leq k-1}[-1] \to \RR_D(k) \to \RR(k)  \to 0
$
induces an exact sequence
\[
\cdots \to \rm H^{2k-1}_D(X(\CC), \RR(k)) \xrightarrow{g} 
\rm H^{2k-1}(X(\CC), \RR(k)) \to \bb H^{2k-1}(X(\CC), \Omega_{X(\CC)}^{\leq k-1}) \to \cdots 
\]
in which
\begin{align}\label{voisiniso}
\rm H^{2k-1}(X(\CC), \RR(k)) = \rm H^{2k-1}(X(\CC), \CC)/F^k\rm H^{2k-1}(X(\CC), \CC)
=
\bb H^{2k-1}(X(\CC), \Omega_{X(\CC)}^{\leq k-1}). 
\end{align}
See the proof of Proposition 12.26 in \cite{voisin} for the second isomorphism in (\ref{voisiniso}). 

Finally, the map $J^{2k-1}_G(X) \to \rm H^{2k}_{G,D}(X(\CC), \ZZ(k))$ is the boundary map in cohomology induced by an exact sequence of $G$-complexes, thus shifts the Hochschild-Serre filtrations on both sides by one degree. This gives a diagram in which both squares commute:
\[
\xymatrix{
J^{2k-1}_G(X) \ar[r] \ar[d] & F^1\rm H^{2k}_{G,D}(X(\CC), \ZZ(k)) \ar[r]\ar[d] & F^1\rm H^{2k}_G(X(\CC), \ZZ(k)) \ar[d]\\
J^{2k-1}(X_\CC)^G \ar[r] & \rm H^1(G, \rm H^{2k-1}(X(\CC), \ZZ_D(k))) \ar[r] & \rm H^1(G, \Lambda).  
}
\]
Consequently, the outer square commutes as well, and we are done. 
\end{proof}
\noindent
By Proposition \ref{prop:prop:prop} and Lemma \ref{lemma:bigdiagram}, we obtain a commutative diagram with exact rows:
\begin{align}\label{double}
\begin{split}
\xymatrixrowsep{1pc}
\xymatrix{
0 \ar[r]& \ca Z^k(X)_{\tn{hom}}\ar@{.>}[d] \ar[r] &
\ca Z^k(X)\ar[d] \ar[r]& \ca Z^k(X)/\ca Z^k(X)_{\tn{hom}} \ar[r] \ar[d]& 0 \\
0 \ar[r]& J^{2k-1}_G(X) \ar[d] \ar[r]& \rm H^{2k}_{G,D}(X(\CC), \ZZ(k)) \ar[r] \ar[d] & \rm H^{2k}(X(\CC), \ZZ(k))^G \ar@{=}[d] \\ 
0 \ar[r] & F^1\rm H^{2k}_G(X(\CC), \ZZ(k)) \ar[r] &\rm H^{2k}_G(X(\CC), \ZZ(k)) \ar[r] & \rm H^{2k}(X(\CC), \ZZ(k))^G \ar[r] & 0. 
}
\end{split}
\end{align}
\begin{lemma} \label{griffithscommutativity}
The composition of the induced morphism $\ca Z^k(X)_{\tn{hom}} \to J_G^{2k-1}(X)$ with $J^{2k-1}_G(X) \to J^{2k-1}(X_\CC)$ coincides with $\ca Z^k(X)_{\tn{hom}} \to \ca Z^k(X_\CC)_{\tn{hom}} \to J^{2k-1}(X_\CC)$.
\end{lemma}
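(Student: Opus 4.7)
The plan is to unwind the construction of the dotted arrow $\ca Z^k(X)_{\tn{hom}} \to J_G^{2k-1}(X)$ appearing in the top-left of diagram (\ref{double}), and then compare it, via a diagram chase, with Griffiths' Abel-Jacobi map $\rm{AJ}_\CC$ constructed analogously in diagram (\ref{voisinsequence}).

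Concretely, I would proceed as follows. Fix $Z \in \ca Z^k(X)_{\tn{hom}}$ and set $\alpha = \varphi(Z) \in \rm H^{2k}_{G,D}(X(\CC),\ZZ(k))$ with $\varphi$ as in Proposition \ref{prop:prop:prop}. The class $\alpha$ maps to $cl(Z) \in \rm H^{2k}_G(X(\CC),\ZZ(k))$, and hence to $cl_\CC(Z_\CC) = 0 \in \rm H^{2k}(X(\CC),\ZZ(k))^G$ since $Z_\CC$ is homologically trivial. By the exactness of the middle row of (\ref{double}) on the left, $\alpha$ admits a unique lift $\tilde\alpha \in J_G^{2k-1}(X)$; this is the image of $Z$ under the dotted arrow.

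Next, I will compare $\tilde\alpha$ with $\rm{AJ}_\CC(Z_\CC) \in J^{2k-1}(X_\CC)^G$. By the commutative square appearing in the top-left of diagram (\ref{bigdiagram}),
\begin{equation*}
\xymatrix{
J_G^{2k-1}(X) \ar[r] \ar[d] & \rm H^{2k}_{G,D}(X(\CC),\ZZ(k)) \ar[d] \\
J^{2k-1}(X_\CC)^G \ar[r] & \rm H^{2k}_D(X(\CC),\ZZ(k))^G,
}
\end{equation*}
the image $\gamma \in J^{2k-1}(X_\CC)^G$ of $\tilde\alpha$ maps to the image of $\alpha = \varphi(Z)$ in $\rm H^{2k}_D(X(\CC),\ZZ(k))^G$, which by Proposition \ref{prop:prop:prop} equals $cl_{\CC,D}(Z_\CC)$. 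On the other hand, Griffiths' Abel-Jacobi class $\rm{AJ}_\CC(Z_\CC)$ also maps to $cl_{\CC,D}(Z_\CC)$ by the left square of (\ref{voisinsequence}). Since taking $G$-invariants is left exact, the injection $J^{2k-1}(X_\CC) \hookrightarrow \rm H^{2k}_D(X(\CC),\ZZ(k))$ in the bottom row of (\ref{voisinsequence}) remains injective on $G$-invariants, forcing $\gamma = \rm{AJ}_\CC(Z_\CC)$, which is the desired identity.

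I do not expect any serious obstacle: the content of the lemma is really the commutativity and injectivity already packaged in diagrams (\ref{voisinsequence}), (\ref{bigdiagram}) and (\ref{double}), together with the compatibility of the equivariant Deligne cycle class $\varphi$ with its complex counterpart $cl_{\CC,D}$ from Proposition \ref{prop:prop:prop}. The only point requiring a moment's care is the injectivity of $J^{2k-1}(X_\CC)^G \to \rm H^{2k}_D(X(\CC),\ZZ(k))^G$, which follows automatically from the left exactness of $(-)^G$.
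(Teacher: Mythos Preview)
Your proof is correct and follows essentially the same approach as the paper: both arguments reduce the claim to the commutativity of the right and bottom faces of the cube (Proposition \ref{prop:prop:prop} and Lemma \ref{lemma:bigdiagram}) together with the injectivity of $J^{2k-1}(X_\CC) \hookrightarrow \rm H^{2k}_D(X(\CC),\ZZ(k))$, with the paper packaging this as a cube diagram and you unwinding it elementwise.
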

\begin{proof}
 Consider the following diagram:
 \[
 \xymatrixrowsep{0.8pc}
 \xymatrix{
 &\ca Z^k(X)_{\tn{hom}}\ar[dl] \ar[dd] \ar@{^{(}->}[rr] && \ca Z^k(X) \ar[dd]\ar[dl] \\
\ca Z^k(X_\CC)_{\tn{hom}}\ar@{^{(}->}[rr]\ar[dd]^{\tn{AJ}_\CC} &&\ca Z^k(X_\CC)\ar[dd]&\\
 &J^{2k-1}_G(X) \ar[dl]\ar@{^{(}->}[rr] && \rm H^{2k}_{G,D}(X(\CC), \ZZ(k)) \ar[dl] \\
 J^{2k-1}(X_\CC) \ar@{^{(}->}[rr] && \rm H^{2k}_D(X(\CC), \ZZ(k)). &
 }
 \]
 By Proposition \ref{prop:prop:prop} and Lemma \ref{lemma:bigdiagram}, each square in this diagram, except possibly the square on the left, commutes. By injectivity of the two horizontal arrows in the bottom square, the square on the left commutes as well. 
\end{proof}

\begin{proof}[Proof of Theorem \ref{abeljacobicommutativity}]
The vertical composition on the left of the diagram (\ref{double}) coincides with the cycle class map $cl \colon \ca Z^k(X)_{\tn{hom}} \to F^1\rm H^{2k}_G(X(\CC), \ZZ(k))$ because the analogous statement for the vertical composition in the middle of (\ref{double}) holds, see Proposition \ref{prop:prop:prop}. We are done because by Lemma's \ref{lemma:bigdiagram} and \ref{griffithscommutativity}, each square in the following diagram commutes: 
\[
\xymatrix{
\ca Z^k(X)_{\tn{hom}} \ar[d] \ar[r] 
& J^{2k-1}_G(X) \ar[r] \ar[d] & F^1\rm H^{2k}_G(X(\CC), \ZZ(k)) \ar[d] \\
\ca Z^k(X_\CC)^G_{\tn{hom}} \ar[r] & J^{2k-1}(X_\CC)^G \ar[r] & \rm{H}^1(G, \Lambda).
}
\]
\end{proof}

\section{Codimension-two cycles on real abelian varieties} \label{sec:codim-two}

We begin with the proof of Theorem \ref{fourierreduction}. The proof consists of two steps: first we prove the theorem for the Jacobian of a real algebraic curve with non-empty real locus, and then we reduce the general case to this particular case. 

\begin{proof}[Proof of Theorem \ref{fourierreduction} (Jacobian case)] Let us prove Theorem \ref{fourierreduction} in the case where $A$ is the Jacobian $J = J(C)$ of a real algebraic curve $C$ of genus $g \in \ZZ_{\geq 1}$ such that $C(\RR) \neq \emptyset$. By Proposition \ref{vanishing}, 
one needs to show that $F^2\rm H^4_G(J(\CC),\ZZ(2))_0$ is algebraic. 
By Theorem \ref{grabowski}, 
the Chern character of the Poincar\'e bundle of $J$ in cohomology $\rm H^{2\bullet}(J(\CC) \times \wh J(\CC), \ZZ(\bullet))^G$ is algebraic. 
By Section \ref{subsec:analysis} and \cite[Corollary 9.24]{huybrechtsfouriermukai}, 
we obtain a commutative diagram
{\small 
\begin{align}\label{bottom}
\xymatrix{
F^2\rm H^4_G(J(\CC),\ZZ(2))_0\ar@{=}[d]  \ar@{^{(}->}[r]^{\Gamma_\ast^{4,2g}} &  F^2\rm H^{2g}_G(\wh J(\CC),\ZZ(g))_0 \ar[d] \ar@{->>}[r]^{\Gamma_\ast^{2g,4}} & F^2\rm H^4_G(J(\CC),\ZZ(2))_0\ar@{=}[d]  \\
\rm H^2(G, \rm H^2(J(\CC),\ZZ(2)))_0 \ar[r]^{\sim\hspace{1em}} &  \rm H^2(G, \rm H^{2g-2}(\wh J(\CC),\ZZ(g)))_0 \ar[r]^{\sim} & \rm H^2(G, \rm H^2(J(\CC),\ZZ(2)))_0
}
\end{align}
}
such that the composition on the bottom row of (\ref{bottom}) is the identity. Therefore, the composition on the top row of (\ref{bottom}) is the identity. Since $J$ satisfies the real integral Hodge conjecture for zero-cycles by \cite[Proposition 2.10]{BW20}, the group $F^2\rm H^{2g}_G(J(\CC),\ZZ(g))_0$ -- and hence also the group $F^2\rm H^4_G(J(\CC),\ZZ(2))_0$ -- is algebraic. 
\end{proof}
\noindent
The following notation will be used throughout the remaining part of Section \ref{sec:codim-two}. 
\begin{definition}
For a smooth projective variety $X$ of dimension $n$ over $\RR$, let $\CH_0(X)_0$ be the group of zero-cycles of degree zero. Observe that $\CH_0(X)_0 = \CH_0(X)_{\tn{hom}}$. 
\end{definition}

\begin{lemma} \label{anotherlemma} Let $f \colon J \to A$ be a surjective homomorphism of real abelian varieties. 
\begin{enumerate}
\item \label{anotherlemma:first}
The induced homomorphism $f_\RR^0 \colon J(\RR)^0 \to A(\RR)^0$ is surjective. 
\item 
If $f_\RR \colon J(\RR) \to A(\RR)$ is surjective, then $\phi = f_\ast \colon \CH_0(J)_0 \to \CH_0(A)_0$ is surjective. 
\end{enumerate}
\end{lemma}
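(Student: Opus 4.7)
For part~(\ref{anotherlemma:first}), the plan is to check that $f_\RR^0$ has surjective differential at the origin; the surjectivity of $f_\RR^0 \colon J(\RR)^0 \to A(\RR)^0$ will then follow from the standard fact that a Lie-group homomorphism with surjective differential has open image, together with the connectedness of $A(\RR)^0$. Since $f$ is a surjective homomorphism of abelian varieties, the induced map $df_0 \colon T_0 J(\CC) \to T_0 A(\CC)$ is a $\CC$-linear surjection, and since $f$ is defined over $\RR$ it commutes with the anti-$\CC$-linear involutions coming from the real structures of $J$ and $A$. Given $w \in T_0 A(\RR)$, one lifts $w$ to some $v \in T_0 J(\CC)$ with $df_0(v) = w$ and replaces $v$ by the average of $v$ and its complex conjugate; the result lies in $T_0 J(\RR)$ and still maps to $w$, establishing surjectivity of the real differential.

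For part~(2), the plan is to exhibit lifts of a convenient generating set of $\CH_0(A)_0$ to $\CH_0(J)_0$. The closed points of $A$ are of two kinds: real points $a \in A(\RR)$ (of degree $1$) and Galois-conjugate pairs $\{z, \bar z\}$ with $z \in A(\CC) \setminus A(\RR)$ (of degree $2$), so $\CH_0(A)_0$ is generated by classes of the form $[a] - [0_A]$ and $[\{z, \bar z\}] - 2\,[0_A]$. For the real-point generators, the hypothesis that $f_\RR$ is surjective yields some $b \in J(\RR)$ with $f(b) = a$, and then $f_\ast([b] - [0_J]) = [a] - [0_A]$. For the conjugate-pair generators, $f_\CC$ is surjective (surjectivity being preserved under base change), so one lifts $z$ to some $w \in J(\CC)$; since $f(\bar w) = \overline{f(w)} = \bar z \neq z$, one must have $\bar w \neq w$, so $\{w, \bar w\}$ is a closed point of $J$ with residue field $\CC$ which $f$ sends isomorphically to $\{z, \bar z\}$, giving $f_\ast [\{w, \bar w\}] = [\{z, \bar z\}]$.

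Neither step presents a serious obstacle: the content of part~(\ref{anotherlemma:first}) is the standard averaging argument showing that a $\CC$-linear surjection defined over $\RR$ restricts to a surjection of the real forms, while part~(2) reduces, via the classification of closed points of a smooth projective variety over $\RR$, to the two elementary lifting problems above, with the hypothesis $f_\RR$ surjective being used precisely to handle the real-point generators.
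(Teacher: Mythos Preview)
Your proof is correct. For part~(ii) your argument is essentially the paper's, spelled out in more detail: both use that $\CH_0(X)_0$ is generated by cycles $[x] - \deg(x)\cdot[0]$ over closed points $x$, and lift these generators along $f$.

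For part~(i) you take a genuinely different route. The paper uses the norm homomorphism $\rm{Nm}_X \colon X(\CC) \to X(\RR)^0$, $x \mapsto x + \sigma(x)$, which is surjective by \cite[Proposition~1.1]{grossharris}; the equation $f_\RR^0 \circ \rm{Nm}_J = \rm{Nm}_A \circ f_\CC$ then forces $f_\RR^0$ to be surjective since $f_\CC$ is. Your tangent-space argument is equally valid and more self-contained, trading the citation for a bit of Lie theory. One could say the two proofs are dual: you average on the tangent space, the paper averages on the group itself.
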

\begin{proof}
(i). 
Let $\rm{Nm}_X \colon X(\CC) \to X(\RR)^0, x \mapsto x + \sigma(x)$ be the norm homomorphism of a real abelian variety $X$. One has that $f_\RR^0 \circ \rm{Nm}_J = \rm{Nm}_A \circ f_\CC$. By \cite[Proposition 1.1]{grossharris}, the homomorphism $\rm{Nm}_A$ is surjective. Thus $f_\RR^0$ is surjective.

(ii). 
Indeed, for an abelian variety $X$ over a field $k$, the group $\CH_0(X)_0$ is generated by zero-cycles of the form $[x] - \deg(k(x_i)/k)\cdot [0]$ for closed points $x$ on $X$.
\end{proof}

\begin{proof}[Proof of Theorem \ref{fourierreduction} (general case)]
To prove that $F^2\rm H^{4}_G(A(\CC),\ZZ(2))_0$ is algebraic for a real abelian variety $A$, we would like to reduce to the Jacobian case. The fact that this can be done rests on the following proposition in combination with Lemma \ref{gabber} below. 
\begin{proposition} \label{keyprop}
Let $0 \to B \to J \xrightarrow{f} A \to 0$ be an exact sequence of abelian varieties over $\RR$ such that $f_\RR \colon J(\RR) \to A(\RR)$ is surjective. Let $d = \dim(J)$ and $g = \dim(A)$. Then 
\begin{align*}
& f_\ast \colon F^2\rm H^{2d}_G(J(C)(\CC),\ZZ(d))_0 \to F^2\rm H^{2g}_G(A(\CC),\ZZ(g))_0 \quad\quad \text{ and } \\
 & \hat{f}^\ast \colon F^2\rm H^{4}_G(\wh J(C)(\CC),\ZZ(2))_0 \to F^2\rm H^{4}_G(\wh A(\CC),\ZZ(2))_0
 \end{align*}
are surjective, where $\wh f \colon \wh A \to \wh J$ is the dual homomorphism of $f \colon J \to A$. 
\end{proposition}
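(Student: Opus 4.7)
The plan is to prove the two claims in turn, establishing the first directly via the real Abel--Jacobi map for zero-cycles and deducing the second from it by Fourier--Mukai duality.

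For Part 1, I will first identify $F^2\rm H^{2d}_G(J(\CC),\ZZ(d))_0$ with the image, under the cycle class map, of the subset $\{w \in \CH_0(J)_0 : \Alb(w) \in J(\RR)^0\} \subset \CH_0(J)_0$, and similarly for $A$. The real integral Hodge conjecture for zero-cycles \cite[Proposition 2.10]{BW20} gives surjectivity of the cycle class map onto $\Hdg^{2d}_G(J(\CC),\ZZ(d))_0$; and Theorem \ref{abeljacobicommutativity}, combined with the identifications $J^{2d-1}(J_\CC)^G = J(\RR)$ and $\rm H^1(G,\rm H^{2d-1}(J(\CC),\ZZ(d))) = \pi_0(J(\RR))$ and the observation that the Abel--Jacobi map for zero-cycles on $J$ is the Albanese sum map, shows that the composition $F^1\rm H^{2d}_G(J(\CC),\ZZ(d))_0 \to F^1/F^2$ corresponds to $\CH_0(J)_0 \xrightarrow{\Alb} J(\RR) \twoheadrightarrow \pi_0(J(\RR))$. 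Hence $[w] \in F^2$ precisely when $\Alb(w) \in J(\RR)^0$. Given $\alpha = [w] \in F^2\rm H^{2g}_G(A(\CC),\ZZ(g))_0$ with $\Alb(w) \in A(\RR)^0$, I use Lemma \ref{anotherlemma}(ii) to obtain $w' \in \CH_0(J)_0$ with $f_\ast(w') = w$; by Lemma \ref{anotherlemma}(i) I pick $j' \in J(\RR)^0$ with $f(j') = \Alb(w)$; setting $b = \Alb(w') - j' \in B(\RR)$ and $z = \iota_{B,\ast}([0]-[b]) \in \CH_0(J)_0$, where $\iota_B \colon B \hookrightarrow J$ is the inclusion, the cycle $w'' := w' + z$ satisfies $f_\ast(w'') = w$ and $\Alb(w'') = j' \in J(\RR)^0$. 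Hence $[w''] \in F^2\rm H^{2d}_G(J(\CC),\ZZ(d))_0$ is a preimage of $\alpha$ under $f_\ast$.

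For Part 2, I combine Part 1 with Fourier--Mukai duality. The $G$-equivariant Fourier isomorphisms $\mr F_J \colon \rm H^{2d-2}(J(\CC),\ZZ(d)) \cong \rm H^2(\wh J(\CC),\ZZ(2))$ and $\mr F_A \colon \rm H^{2g-2}(A(\CC),\ZZ(g)) \cong \rm H^2(\wh A(\CC),\ZZ(2))$ induce isomorphisms on $\rm H^2(G,-)$ that preserve the $_0$ subgroups by Lemma \ref{topologicalcompatibility}, and intertwine $f_\ast$ with $\hat f^\ast$ by Fourier--Mukai functoriality \cite[Proposition 3]{beauvillefourier}. Proposition \ref{vanishing} gives $F^3\rm H^4_G(\wh J(\CC),\ZZ(2))_0 = F^3\rm H^4_G(\wh A(\CC),\ZZ(2))_0 = 0$, so the natural inclusions $F^2\rm H^4_G(-,\ZZ(2))_0 \hookrightarrow \rm H^2(G,\rm H^2(-,\ZZ(2)))_0$ are equalities for both $\wh J$ and $\wh A$; meanwhile, degeneration of the Hochschild--Serre spectral sequence makes the natural maps $F^2\rm H^{2d}_G(J(\CC),\ZZ(d))_0 \twoheadrightarrow \rm H^2(G,\rm H^{2d-2}(J(\CC),\ZZ(d)))_0$ and its $A$-analogue surjective. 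The composition $F^2\rm H^{2d}_G(J(\CC),\ZZ(d))_0 \twoheadrightarrow F^2\rm H^4_G(\wh J(\CC),\ZZ(2))_0 \xrightarrow{\hat f^\ast} F^2\rm H^4_G(\wh A(\CC),\ZZ(2))_0$ then coincides, by the Fourier and Hochschild--Serre identifications above, with the composition $F^2\rm H^{2d}_G(J(\CC),\ZZ(d))_0 \xrightarrow{f_\ast} F^2\rm H^{2g}_G(A(\CC),\ZZ(g))_0 \twoheadrightarrow F^2\rm H^4_G(\wh A(\CC),\ZZ(2))_0$, which is surjective by Part 1. Hence $\hat f^\ast$ is surjective.

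The main obstacle is making the Abel--Jacobi characterization of $F^2\rm H^{2d}_G(J(\CC),\ZZ(d))_0$ in Part 1 fully rigorous: one must check that the topological $_0$ condition is automatic for algebraic classes in top degree and track the filtration-level content of Theorem \ref{abeljacobicommutativity} carefully. The Fourier step in Part 2 is then routine, modulo a careful handling of the Hochschild--Serre filtration and the topological condition.
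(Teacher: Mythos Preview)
Your proposal is correct and follows essentially the same approach as the paper: for Part~1 you use the real integral Hodge conjecture for zero-cycles together with Theorem~\ref{abeljacobicommutativity} to characterize $F^2$ via the Albanese/Abel--Jacobi map (this is the paper's Definition~\ref{F2def}), and for Part~2 you use Proposition~\ref{vanishing}, Lemma~\ref{topologicalcompatibility}, and Fourier functoriality exactly as in the paper's diagram~(\ref{hugediagram}). The only cosmetic difference is that in Part~1 you adjust the lift $w'$ explicitly by a zero-cycle supported on $B$, whereas the paper packages the same idea as a snake-lemma argument (their Step~1 shows $\Ker(\phi)\twoheadrightarrow\Ker(f)$ via $\CH_0(B)_0$, then Step~2 applies the snake lemma); both arguments are equivalent.
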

\begin{proof}[Proof of Proposition \ref{keyprop}]
We will need some notation. 
\begin{definition}  \label{F2def}
For an abelian variety $A$ of dimension $g$ over $\RR$, define
\begin{align*}
F^2\CH_0(A)_0 &= \Ker \left(  \CH_0(A)_0 \to \rm H^1(G, \rm H^{2g-1}(A(\CC), \ZZ(g)))\right) \\
&= \left\{ \alpha \in \CH_0(A)_0 \mid \textnormal{AJ}(\alpha) \in A(\RR)^0 \right\}.
\end{align*}
\end{definition}
\noindent
For the second equality in Definition \ref{F2def}, see Theorem \ref{abeljacobicommutativity}.  
For an abelian variety $X$ of dimension $n$ over $\RR$, define $\Lambda_X = \rm H^{2n-1}(X(\CC), \ZZ(n))$. Poincar\'e duality identifies $\Lambda_X$ with $\rm H_1(X(\CC), \ZZ)$ \cite[Corollaire 3.1.9]{mangolte}, and we have $\pi_0(X(\RR)) = \rm H^1(G, \Lambda_X)$ (see (\ref{groupcohomology})). 
\begin{enumerate}[leftmargin=-0.08cm, rightmargin = -0.08cm]
    \item[Step 1:] \emph{The kernel of the push-forward $\phi \colon \CH_0(J)_{0} \to \CH_0(A)_{0}$ surjects onto the kernel of the homomorphism $f \colon \pi_0(J(\RR)) \to \pi_0(A(\RR))$.} It suffices to show that the composition
    \begin{align}\label{eaaa}
 \CH_0(B)_0 \to \Ker(\phi) \to \Ker(f)
    \end{align}
    is surjective. Now (\ref{eaaa}) coincides with the composition $\CH_0(B)_0 \to \pi_0(B(\RR)) \to \Ker(f)$ of surjections; the map $\pi_0(B(\RR)) \to \Ker(f)$ is surjective by 
the exact sequence 
\begin{align*} 
0 \to \Lambda_B^G \to \Lambda_J^G \to \Lambda_A^G \to \rm H^1(G, \Lambda_B) \to \rm H^1(G, \Lambda_J) \to \rm H^1(G, \Lambda_A) \to 0 
\end{align*}
arising from the short exact sequence of $G$-modules $0 \to \Lambda_B \to \Lambda_J \to \Lambda_A \to 0$. 
    \item[Step 2:] \emph{Surjectivity of $f_\ast$}: Consider the following commutative diagram:
\[
\xymatrixrowsep{1.3pc}
\xymatrix{
F^2\CH_0(J)_{0} \ar[r]^{\phi|_{F^2}} \ar[d] & F^2\CH_0(A)_{0} \ar[d] \\
F^2\rm H^{2d}_G(J(\CC), \ZZ(d))_0 \ar[r]^{f_\ast} & F^2\rm H^{2g}_G(A(\CC), \ZZ(g))_0. 
}
\]
Its vertical arrows are surjective by the real integral Hodge conjecture for zero-cycles and Definition \ref{F2def}. To prove the surjectivity of $f_\ast$ on the bottom row, it thus suffices to prove the surjectivity of $\phi|_{F^2}$ on the top row. 
By Definition \ref{F2def}, the rows in the following commutative diagram are exact:
\begin{align*}
\xymatrixrowsep{1.3pc}
\xymatrix{
0 \ar[r] & F^2\CH_0(J)_{0} \ar[r] \ar[d]^{\phi|_{F^2}} & \CH_0(J)_{0} \ar[r] \ar[d]^\phi& \rm H^1(G, \Lambda_J) \ar[r] \ar[d]^f & 0  \\
0 \ar[r] & F^2\CH_0(A)_{0} \ar[r] & \CH_0(A)_{0} \ar[r]& \rm H^1(G, \Lambda_A) \ar[r] & 0.  
}
\end{align*}
Note that $\phi$ is surjective by Lemma \ref{anotherlemma}. Since $J(\RR) \to A(\RR)$ is surjective we have that the map $f \colon \pi_0(J(\RR)) \to \pi_0(A(\RR))$ is surjective. By the snake lemma, to prove that $\phi|_{F^2}$ is surjective, it suffices to show that $\Ker(\phi) \to \Ker(f)$ is surjective, which is Step 1. 
\item[Step 3:]\emph{The maps $f_\ast$ and $\hat{f}^\ast$ fit into the following commutative diagram, where the arrows $\twoheadrightarrow$ are surjective and the arrows $\xrightarrow{\sim}$ are isomorphisms}:
{\small
\begin{align} \label{hugediagram}
\xymatrixrowsep{1.2pc}
\xymatrixcolsep{0.3pc}
\xymatrix{
F^2\rm H^{2d}_G(J(\CC), \ZZ(d))_0 \ar@{->>}[dd]^{f_\ast}\ar@{->>}[dr]& & F^2\rm H^4_G(\wh J(\CC), \ZZ(2))_0\ar[dd]^(.35){\hat{f}^\ast}
\ar[dr]^{\sim}& \\
& \rm H^2(G, \rm H^{2d-2}(J(\CC), \ZZ(d)))_0\ar[dd]^{\rm H^2(G, f_\ast)}\ar[rr]_{\rm H^2(G, \mr F_J)\quad\quad\quad}^{\sim\quad\quad\quad\quad} &&\rm H^2(G, \rm H^2(\wh J(\CC), \ZZ(2)))_0\ar[dd]^{\rm H^2(G, \hat{f}^\ast)} \\
F^2\rm H^{2g}_G(A(\CC), \ZZ(g))_0 \ar@{->>}[dr]& & F^2\rm H^4(\wh A(\CC), \ZZ(2))_0 \ar[dr]^{\sim}& \\
& \rm H^2(G, \rm H^{2g-2}(A(\CC), \ZZ(g)))_0 \ar[rr]^\sim_{\rm H^2(G, \mr F_A)}&&\rm H^2(G, \rm H^2(\wh A(\CC), \ZZ(2)))_0. 
}
\end{align}
}
Indeed, this follows from the functoriality of Fourier transforms (see \cite[(3.7.1)]{moonenpolishchuk}), together with Proposition \ref{vanishing} and Lemma \ref{topologicalcompatibility}. 
\item[Step 4:] \emph{Surjectivity of $\hat{f}^\ast$}: The surjectivity of $f_\ast$ on the left of (\ref{hugediagram}) was shown in Step 2. By the commutativity on the left of diagram (\ref{hugediagram}), the morphism 
$
\rm H^2(G, f_\ast)$ 
is surjective. By the commutativity of the square on the front, this implies that $\rm H^2(G, \hat{f}^\ast)$ 
is surjective which, by the commutativity on the right hand side of the diagram, implies that the morphism $$\hat{f}^\ast \colon F^2\rm H^4_G(\wh J(\CC), \ZZ(2))_0 \to F^2\rm H^4(\wh A(\CC), \ZZ(2))_0$$ is surjective. 
  \end{enumerate}
 This finishes the proof of Proposition \ref{keyprop}. 
\end{proof}
\begin{lemma} \label{gabber}
Let $A$ be an abelian variety over $\RR$. Then $A$ contains a smooth, proper, geometrically connected curve $C$ over $\RR$ that passes through $0 \in A(\RR)$ in such a way that the following two conditions hold: the induced homomorphism $f \colon J(C) \to A$ is surjective with connected kernel, and the homomorphism $f_\RR \colon J(C)(\RR) \to A(\RR)$ is surjective. 
\end{lemma}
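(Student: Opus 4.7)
The plan is to realize $C$ as a general complete intersection of hyperplane sections of $A$ passing through a judiciously chosen finite set of real points, following Gabber's strategy in the real setting. If $g = \dim(A) = 1$ we take $C = A$, so assume $g \geq 2$. Write $\pi_0(A(\RR)) = \{A^0, A^1, \dotsc, A^m\}$ with $A^0 = A(\RR)^0$, pick a point $a_i \in A^i$ for each $i \in \{1, \dotsc, m\}$, and set $S = \{0, a_1, \dotsc, a_m\} \subset A(\RR)$. Choose a symmetric ample line bundle $L$ on $A$ defined over $\RR$ and, for $n \gg 0$, use $L^{\otimes n}$ to embed $A \hookrightarrow \PP^N_\RR$. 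Then the idea is to take $g-1$ general $\RR$-rational hyperplanes $H_1, \dotsc, H_{g-1}$ in $\PP^N_\RR$, each containing $S$, and set $C = A \cap H_1 \cap \dotsb \cap H_{g-1}$.

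A real Bertini-type argument -- the essential input -- would yield, for $n$ large enough, that $C$ is a smooth, geometrically connected projective curve over $\RR$ containing $S$. In particular $0 \in C(\RR)$, giving an Abel-Jacobi embedding $\iota \colon C \hookrightarrow J(C)$ based at $0$ and an induced homomorphism $f \colon J(C) \to A$. The Lefschetz hyperplane theorem applied to the complete intersection $C \subset A$ would then imply that the inclusion induces a surjection $\rm H_1(C(\CC), \ZZ) \twoheadrightarrow \rm H_1(A(\CC), \ZZ)$, which translates, via the canonical identification between first singular homology and homology of abelian varieties, into the statement that $f$ is surjective with connected kernel. For the surjectivity of $f_\RR \colon J(C)(\RR) \to A(\RR)$, observe that each $a_i \in C(\RR) \subset J(C)(\RR)$ satisfies $f(\iota(a_i)) = a_i \in A^i$, so the image of $f_\RR$ meets every connected component of $A(\RR)$; combined with the surjection $J(C)(\RR)^0 \twoheadrightarrow A(\RR)^0$ supplied by Lemma \ref{anotherlemma}(\ref{anotherlemma:first}), this yields the desired surjectivity.

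The main obstacle is the real Bertini step: one has to produce $\RR$-rational hyperplanes containing the prescribed base points $S$ whose intersection with $A$ is simultaneously smooth and geometrically connected, as opposed to breaking into a pair of complex-conjugate components. For a sufficiently positive symmetric embedding via $L^{\otimes n}$, the linear system of hyperplanes through $S$ remains base-point-free on $A \setminus S$ and large enough that one can apply the usual Bertini smoothness theorem on the $\RR$-scheme and the Bertini irreducibility theorem to its base change to $\CC$, which over $\RR$ translates into geometric connectedness. Carrying out this Bertini-with-basepoints argument while preserving $\RR$-rationality of the hyperplanes is the technical heart of the lemma; the Lefschetz and Abel-Jacobi ingredients in the second paragraph are then formal.
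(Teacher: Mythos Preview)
Your proposal is correct and follows essentially the same strategy as the paper: choose a finite set $S \subset A(\RR)$ meeting every connected component and containing $0$, take $g-1$ general hyperplane sections through $S$, then use Lefschetz on $\rm H_1$ for surjectivity with connected kernel and Lemma \ref{anotherlemma}(\ref{anotherlemma:first}) plus $S \subset \Ima(f_\RR)$ for surjectivity on real points. The only difference is tone: you flag the real Bertini-with-basepoints step as the ``technical heart'' and leave it somewhat tentative, whereas the paper dispatches it in one line by observing that $\RR$ is infinite and citing the standard Bertini references (Hartshorne II, Theorem 8.18, and Debarre's book), which already give smooth and geometrically connected hyperplane sections through finitely many prescribed points over any infinite field---so the worry about breaking into conjugate components is handled automatically, and the choice of a \emph{symmetric} ample line bundle is unnecessary.
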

\begin{proof}
Let $S \subset A(\RR)$ be a finite set of points containing $0 \in A(\RR)$ and at least one point of each connected component of $A(\RR)$. Since $\RR$ is infinite, Bertini's theorem can be applied: there exists a smooth and geometrically connected hyperplane section $Z \subset A$ passing through $S$ \cite[II, Theorem 8.18]{HAG}, \cite[Footnote 12, page 32]{debarrebook}. Let $g = \dim(A)$. By taking $g-1$ general such hyperplane sections, we get a smooth, geometrically connected curve $C$ in $A$ that contains $S$. We claim that $C$ satisfies the requirements stated in the proposition. Write $J = J(C)$ and consider the map $f \colon J \to A$ arising from the inclusion $(C,0) \hookrightarrow (A,0)$. By the proof of Theorem 10.1 in \cite{milnejacobian}, the homomorphism $f$ is surjective. Moreover, the kernel of $f \colon J_\CC \to A_\CC$ is connected. Indeed, this follows from the fact that $\rm H_1(C(\CC), \ZZ) \to \rm H_1(A(\CC), \ZZ)$ is surjective by the Lefschetz hyperplane theorem; alternatively, see \cite[Proposition 2.4]{gabberspacefilling} for an algebraic argument. The morphism $f_\RR^0 \colon J(\RR)^0 \to A(\RR)^0$ is surjective by Lemma \ref{anotherlemma}. Since $S$ is contained in the image of $f_\RR$, we conclude that $f_\RR$ is surjective and we are done.  
\end{proof}
\noindent
We can now finish the proof of Theorem \ref{fourierreduction}. Let $A$ be an abelian variety over $\RR$. 
By Proposition \ref{vanishing}, it remains to prove that $F^2\rm H^4_G(A(\CC),\ZZ(2))_0$ is algebraic. Let $C \subset \wh A$ be a real algebraic curve that satisfies the conditions of Lemma \ref{gabber}. By Proposition \ref{keyprop}, the pull-back $\hat{f}^\ast \colon F^2\rm H^{4}_G(\wh J(C)(\CC),\ZZ(2))_0 \to F^2\rm H^{4}_G(A(\CC),\ZZ(2))_0$ is surjective, where $f \colon J(C) \to \wh A$ is the homomorphism induced by the inclusion of $(C,0)$ in $(\wh A, 0)$. By the proof of Theorem \ref{fourierreduction} in the Jacobian case, we know that $ F^2\rm H^{4}_G(\wh J(C)(\CC),\ZZ(2))_0$ is algebraic. Therefore $F^2\rm H^{4}_G(A(\CC),\ZZ(2))_0$ is algebraic. 
\end{proof}

\begin{proof}[Proof of Corollary \ref{questionreductioncorollary}]
Consider the filtration (\ref{topologically-distinguished-filtration}). By Lemma \ref{lemma:important0}, $A$ satisfies the real integral Hodge conjecture if and only if $F^1\rm H^4_G(A(\CC), \ZZ(2))_0$ and $\Hdg^4(A(\CC), \ZZ(2))^G$ are algebraic. Thus, the corollary follows from Theorems \ref{theorem1}, 
\ref{fourierreduction} and \ref{abeljacobicommutativity}. 
\end{proof}

\section{One-cycles on real abelian threefolds that split as a product} \label{sec:last}




\subsection{An elliptic curve with connected real locus times an abelian surface} 

\begin{proposition} \label{prop:productsurfacecurve}
Let $B$ be a real abelian surface, and let $E$ be a real elliptic curve whose real locus $E(\RR)$ is connected. The Abel-Jacobi homomorphism 
\begin{align}\label{surjectivityforproducts}
\rm{AJ}\; \colon \CH^2(B \times E)_{\hom} \to \rm H^1(G, \rm H^3(B(\CC) \times E(\CC), \ZZ(2))).
\end{align}
is surjective. In particular, Theorem \ref{th:abeliansurface} holds. 
\end{proposition}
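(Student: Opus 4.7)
The strategy is to apply the K\"unneth formula to the target $G$-module and handle each summand separately. Since $\rm H^\bullet(B, \ZZ)$ and $\rm H^\bullet(E, \ZZ)$ are torsion-free, the K\"unneth isomorphism yields a $G$-equivariant splitting
\begin{align*}
\rm H^3(B(\CC) \times E(\CC), \ZZ(2)) \cong \rm H^3(B, \ZZ(2)) &\oplus \bigl(\rm H^2(B, \ZZ(1)) \otimes \rm H^1(E, \ZZ(1))\bigr) \\
&\oplus \bigl(\rm H^1(B, \ZZ(1)) \otimes \rm H^2(E, \ZZ(1))\bigr).
\end{align*}
The decisive use of the hypothesis that $E(\RR)$ is connected is an explicit period computation in the spirit of Section \ref{sec:moduliofabelianRRR}: it shows that $\rm H^1(E(\CC), \ZZ) \cong \ZZ[G]$ as $G$-module, and since $\ZZ[G] \otimes \ZZ(k) \cong \ZZ[G]$, also $\rm H^1(E(\CC), \ZZ(j)) \cong \ZZ[G]$ for every $j$. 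Tensoring any $G$-module with $\ZZ[G]$ yields an induced module whose higher group cohomology vanishes, so the middle summand contributes nothing to $\rm H^1(G, -)$. On the other hand, the class $[E] \in \rm H^2(E, \ZZ(1))$ carries trivial $G$-action (the orientation reversal by $\sigma$ cancels the Tate twist), so the third summand contributes $\rm H^1(G, \rm H^1(B, \ZZ(1)))$. Using Poincar\'e duality on $B$ together with the identification of $\rm H^1(G, \rm H_1)$ with $\pi_0$ of the real locus (see (\ref{groupcohomology})), the first summand contributes $\pi_0(B(\RR))$ and the third contributes $\pi_0(\wh B(\RR))$.

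I would then realize each of these two component groups by explicit families of real cycles on $B \times E$, after fixing a base point $e_0 \in E(\RR)$ (which exists since $E(\RR)$ is connected and hence non-empty). For the first summand, given $b \in B(\RR)$, the $1$-cycle $([b] - [0_B]) \times E$ lies in $\CH^2(B \times E)_{\hom}$ (homologous to zero since the degree of $[b] - [0_B]$ vanishes). By the naturality of the Abel-Jacobi map along $\pi_1 \colon B \times E \to B$ (Theorem \ref{abeljacobicommutativity}), its class equals $\pi_1^*\rm{AJ}_B([b] - [0_B])$ and lies in the $(3,0)$-K\"unneth summand of $J^3(B \times E)$, which is canonically $J^3(B) = \Alb(B) = B$; the value is $b \in B(\RR)$, and as $b$ varies we surject onto $\pi_0(B(\RR))$. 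For the third summand, given $D \in \Pic^0(B)(\RR) = \wh B(\RR)$, the cycle $D \times \{e_0\}$ belongs to $\CH^2(B \times E)_{\hom}$ (since $[D] = 0 \in \rm H^2(B, \ZZ(1))$). The formula $\rm{AJ}_{B \times E}(D \times \{e_0\}) = \pi_1^*\rm{AJ}_B(D) \cup \pi_2^*[e_0]$, valid because $[e_0]$ is a Hodge class on $E$, places this Abel-Jacobi class in the $(1,2)$-K\"unneth summand of $J^3(B \times E)$, which is canonically $J^1(B) = \wh B$; the value is $D \in \wh B(\RR)$, and as $D$ varies we surject onto $\pi_0(\wh B(\RR))$.

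Summing the two constructions yields surjectivity of (\ref{surjectivityforproducts}), which proves the proposition. To deduce Theorem \ref{th:abeliansurface}, I would combine this with Corollary \ref{questionreductioncorollary}: since $|\pi_0(A(\RR))| = |\pi_0(B(\RR))| \cdot |\pi_0(E(\RR))| = |\pi_0(B(\RR))| \in \{1,2,4\}$, we have in particular $|\pi_0(A(\RR))| \ne 8$, so by Corollary \ref{corollary:n(A)} the inclusion $\rm H^1(G, \rm H^3(A(\CC), \ZZ(2)))_0 \subseteq \rm H^1(G, \rm H^3(A(\CC), \ZZ(2)))$ is an equality and the real integral Hodge conjecture for $A$ follows. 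The main technical obstacle is correctly identifying the K\"unneth summands of $J^3(B \times E)$ as intermediate Jacobians (in particular, noting that the $(1,2)$-summand equals the \emph{dual} abelian variety $\wh B$ rather than $B$) and verifying the cap-product compatibility of the real Abel-Jacobi map for the specific cycle $D \times \{e_0\}$; this is a direct computation with 3-chains against $F^2\rm H^3$ via the equivariant Deligne-theoretic setup of Section \ref{section:AbelJacobi}.
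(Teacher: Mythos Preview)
Your approach is essentially identical to the paper's: the same K\"unneth decomposition, the same vanishing of the middle summand via $\rm H^1(E(\CC),\ZZ)\cong\ZZ[G]$, and the same two families of cycles (degree-zero $0$-cycles on $B$ pulled back along $\pi_1$, and homologically trivial divisors on $B$ crossed with a real point of $E$). One logical circularity, however: in deducing Theorem~\ref{th:abeliansurface} you invoke Corollary~\ref{corollary:n(A)}, but in the paper that corollary is itself a \emph{consequence} of Proposition~\ref{prop:productsurfacecurve} (its proof reduces topologically to the product $A=B\times E$ and then quotes Proposition~\ref{prop:productsurfacecurve}). The repair is immediate and is exactly what the paper does: since algebraic cycle classes satisfy the topological condition~(\ref{topologicalcondition}), the image of the Abel--Jacobi map already lies in the subgroup $\rm H^1(G,\rm H^3(A(\CC),\ZZ(2)))_0$, so surjectivity onto the full $\rm H^1(G,\rm H^3)$ is more than enough, and Corollary~\ref{questionreductioncorollary} alone finishes.
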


\begin{proof}
We have  
$
\rm H^1(G, \rm H^2(B(\CC), \ZZ) \otimes \rm H^1(E(\CC), \ZZ)) = (0)
$
because $\rm H^1(E(\CC), \ZZ) \cong \ZZ[G]$; the latter follows from the fact that the real locus $E(\RR)$ of $E$ is connected \cite[Chapter IV, Example 3.1]{vanhamel}. By the K\"unneth formula, the canonical morphism 
\begin{align} \label{equiv:kunneth}
\begin{split}
\rm H^1(G, \rm H^3(B(\CC), \ZZ(2)) \;\; &\bigoplus \;\; \rm H^1(G, \rm H^1(B(\CC), \ZZ(1)) \otimes \rm H^2(E(\CC), \ZZ(1))) \\
  & \to \;\; \rm H^1(G, \rm H^3(B(\CC) \times E(\CC), \ZZ(2)))
\end{split}
\end{align}
is therefore an isomorphism. Since $\rm H^2(E(\CC), \ZZ(1)) \cong \ZZ$ as $G$-modules, the canonical map
\begin{align} \label{equiv:gmodstructure}
\begin{split}
\rm H^1(G, \rm H^1(B(\CC), \ZZ(1))) \;\; &\bigotimes \;\;  \rm H^0(G, \rm H^2(E(\CC), \ZZ(1))) \\
  & \to \;\;  \rm H^1(G, \rm H^1(B(\CC), \ZZ(1)) \otimes \rm H^2(E(\CC), \ZZ(1)))
  \end{split}
\end{align}
is an isomorphism as well. To simplify notation, for any abelian variety $X$ over $\RR$, define 
\begin{align}\label{lastequation}
\rm H^i_X(j) = \rm H^i(X(\CC), \ZZ(j)).
\end{align} 
Consider the following commutative diagram: 
{\small
\begin{align*} \label{equiv:important}
\begin{split}
\xymatrixrowsep{1pc}
\xymatrix{
\CH^2(B)_{0} \bigoplus  \left(\CH^1(B)_{\hom} \otimes \CH^1(E) \right) \ar[r] \ar[d] & \CH^2(B \times E)_{\hom} \ar[d] \\
F^1\rm H^4_G(B(\CC), \ZZ(2)) \bigoplus  \left(F^1\rm H^2_G(B(\CC), \ZZ(1)) \otimes \rm H^2_G(E(\CC), \ZZ(1)) \right) \ar[r]  \ar[d]& F^1\rm H^4_G(B(\CC) \times E(\CC), \ZZ(2)) \ar[dd] \\
\rm H^1(G, \rm H^3_B(2)) \bigoplus \left( \rm H^1(G, \rm H^1_B(1)) \otimes \rm H^0(G, \rm H^2_E(1)) \right) \ar[d]^{\wr} &  \\
\rm H^1(G, \rm H^3_B(2)) \bigoplus \rm H^1(G, \rm H^1_B(1) \otimes \rm H^2_E(1)) \ar[r]^{\hspace{10mm} \sim} & \rm H^1(G, \rm H^3_{B \times E}(2)). 
}
\end{split}
\end{align*}
}
The indicated isomorphisms $\xrightarrow{\sim}$ in this diagram arise from (\ref{equiv:kunneth}) and (\ref{equiv:gmodstructure}) above. The map $\CH^2(B)_{0} \to \rm H^1(G, \rm H^3_B(2))$ is surjective because by Theorem \ref{abeljacobicommutativity} it factors as 
\[
\CH^2(B)_0 = \CH_0(B)_0 \twoheadrightarrow B(\RR) \twoheadrightarrow \rm H^1(G, \rm H^3(B(\CC), \ZZ(2))). 
\]
The map $\CH^1(B)_{\hom} \to \rm H^1(G, \rm H^1_B(1))$ is surjective by the real integral Hodge conjecture for divisors (see Section \ref{subsec:divisors}) and the fact that the topological condition for degree two cohomology classes is trivial \cite[\S2.3.1]{BW20}. 
Thus, (\ref{surjectivityforproducts}) is surjective. By Corollary \ref{questionreductioncorollary}, the abelian variety $A = B \times E$ satisfies the real integral Hodge conjecture. 
\end{proof}
\noindent
The topological condition (\ref{topologicalcondition}) does not appear on the right hand side of (\ref{surjectivityforproducts}). 
This has the following corollary. 
For a real abelian variety $A$, one has $\va{\pi_0(A(\RR))} \leq 2^{\dim(A)}$ (see \cite[\S1]{grossharris}). Moreover, for every non-negative pair of integers $(i,g)$ with $i \leq g$, there is a real abelian variety $A$ of dimension $g$ such that $\va{\pi_0(A(\RR))} = 2^i$ (take a suitable product of elliptic curves). In particular, the map $\ca A_3(\RR) \to \ZZ$, $[A] \mapsto \va{\pi_0(A(\RR))}$ induces a bijection $\pi_0(\ca A_3(\RR)) \cong \{1,2,4,8\}$. 

\begin{corollary} \label{corollary:n(A)}
Let $A$ be a real abelian threefold, and suppose that $\va{\pi_0(A(\RR))} \neq 8$. The canonical map 
\begin{align} \label{AJsurjectivity}
F^1\rm H^4_G(A(\CC), \ZZ(2))_0 \to \rm H^1(G, \rm H^3(A(\CC), \ZZ(2)))
\end{align}
is surjective. Thus, $A$ satisfies the real integral Hodge conjecture if and only if the real Abel-Jacobi map $ \CH_1(A)_{\hom} \to \rm H^1(G, \rm H^3(A(\CC), \ZZ(2)))$ is surjective. 
\end{corollary}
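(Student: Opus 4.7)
The first assertion is a topological statement about the $G$-space $A(\CC)$, so my plan is to reduce it, via a $G$-equivariant topological decomposition, to the case of an algebraic product $B\times E$ and then invoke Proposition \ref{prop:productsurfacecurve}. By \cite[Chapter IV, Example 3.1]{vanhamel} (as used in the proof of Proposition \ref{vanishing}), there exist real elliptic curves $E_1,E_2,E_3$ and a $G$-equivariant homeomorphism $A(\CC)\cong E_1(\CC)\times E_2(\CC)\times E_3(\CC)$, in which the number of factors with $E_i(\RR)$ disconnected equals $\log_2\va{\pi_0(A(\RR))}$. Since $\va{\pi_0(A(\RR))}\neq 8$, at least one $E_i(\RR)$ is connected, and after relabelling I may assume that $E_3(\RR)$ is connected. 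Setting $B=E_1\times E_2$ and $E=E_3$, the product $B\times E$ is a real abelian threefold in which $E$ is a real elliptic curve with connected real locus, and there is a $G$-equivariant homeomorphism $A(\CC)\cong (B\times E)(\CC)$.

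Next I would apply Proposition \ref{prop:productsurfacecurve} to this algebraic product $B\times E$: it asserts that the real Abel-Jacobi map $\rm{AJ}\colon \CH^2(B\times E)_{\hom}\to \rm H^1(G,\rm H^3((B\times E)(\CC),\ZZ(2)))$ is surjective onto the entire target. By Theorem \ref{abeljacobicommutativity} and \cite{krasnovgroth}, this $\rm{AJ}$ factors through the cycle class map, hence through $F^1\rm H^4_G((B\times E)(\CC),\ZZ(2))_0$; it follows that the canonical morphism $F^1\rm H^4_G((B\times E)(\CC),\ZZ(2))_0\to \rm H^1(G,\rm H^3((B\times E)(\CC),\ZZ(2)))$ is surjective. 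Every ingredient of this statement — the Hochschild-Serre filtration, the topological-condition subgroup $_0$, the target $\rm H^1(G,\rm H^3(-,\ZZ(2)))$, and the edge map between them — is manifestly a topological invariant of the pair $(X(\CC),G)$ together with the real locus $X(\RR)$. The $G$-equivariant homeomorphism from the first paragraph therefore transports the surjectivity from $B\times E$ back to $A$, establishing the first assertion.

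The second assertion is immediate from the first and Corollary \ref{questionreductioncorollary}: the latter characterizes the real integral Hodge conjecture for $A$ as the surjectivity of $\CH_1(A)_{\hom}\to \rm H^1(G,\rm H^3(A(\CC),\ZZ(2)))_0$, and the first assertion identifies this target with the full group $\rm H^1(G,\rm H^3(A(\CC),\ZZ(2)))$. The main subtlety lies in the transfer step: Van Hamel's decomposition must be arranged so that the resulting $B\times E$ is a genuine algebraic product for Proposition \ref{prop:productsurfacecurve} to apply (which is granted by the elliptic-curve form of the cited result), and one must verify that the $G$-homeomorphism respects the Hochschild-Serre filtration and the Kahn--Krasnov topological condition. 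Both points are essentially bookkeeping, as every construction involved is built from the $G$-space structure and the inclusion of the real fixed locus.
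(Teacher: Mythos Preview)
Your proof is correct and follows essentially the same route as the paper: reduce the first assertion to a topological statement, use Van Hamel's decomposition to replace $A(\CC)$ by a product $B(\CC)\times E(\CC)$ with $E(\RR)$ connected, and invoke Proposition~\ref{prop:productsurfacecurve}; the second assertion then follows from Corollary~\ref{questionreductioncorollary}. Your write-up is slightly more explicit than the paper's about why the Abel--Jacobi map factors through $F^1\rm H^4_G(\cdot,\ZZ(2))_0$ and why the transfer along the $G$-homeomorphism is legitimate, but the substance is identical.
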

\begin{proof}
To prove the surjectivity of (\ref{AJsurjectivity}), we may replace $A(\CC)$ by any differentiable $G$-manifold which is $G$-equivariantly diffeomorphic to $A(\CC)$. In particular, we may assume that 
$A = B \times E$, where $B$ is an abelian surface and $E$ an elliptic curve whose real locus is connected (see (\ref{topologicalstatement})). The surjectivity of (\ref{AJsurjectivity}) follows then from the fact that the group $\rm H^1(G, \rm H^3(B(\CC) \times E(\CC), \ZZ(2)))$ is algebraic by Proposition \ref{prop:productsurfacecurve}. 
\end{proof}

\subsection{The product of three real elliptic curves} \label{subsec:threeellipticcurves}Recall that $\va{\pi_0(A(\RR))} \in \{1,2,4,8\}$ for an abelian threefold $A$ over $\RR$. 
It follows from Theorem \ref{th:abeliansurface} that for each $m \in \{1,2,4\}$, there is a real abelian threefold $A$ with $\va{\pi_0(A(\RR))} = m$ such that $A$ satisfies the real integral Hodge conjecture. 
\begin{question} \label{question:pi8}
Does there exist an abelian threefold $A$ over $\RR$ with $\va{\pi_0(A(\RR))} = 8$ such that $A$ satisfies the real integral Hodge conjecture?
\end{question}
\noindent
In this section, we consider this question for the third self-product of a real elliptic curve. 

Let $E$ be an elliptic curve over $\RR$ with disconnected real locus. Choose a symplectic basis $\{x, y\}$ of $ \rm H^1(E(\CC), \ZZ)$ with $\sigma^\ast(x) = x$ and $\sigma^\ast(y) = -y$ (see e.g.\ \cite[\S 9]{grossharris}). Let $A = E^3$, and define $\pi_i \colon A(\CC) \to E(\CC)$ as the projection onto the $i$-th factor. We consider the basis element $\gamma \coloneqq \pi_1^\ast(x) \cdot \pi_2^\ast(y) \cdot \pi_3^\ast(x) \in \rm H^3(A(\CC), \ZZ)$, as well as the Galois cohomology class  
$
[\gamma] = [\pi_1^\ast(x) \cdot \pi_2^\ast(y) \cdot \pi_3^\ast(x)] \in \rm H^1(G, \rm H^3(A(\CC), \ZZ))
$ 
that $\gamma$ defines. 
Let $\beta \in F^1\rm H^2_G(E(\CC), \ZZ)$ be any element that lifts $[y]$. Let $\alpha \in \rm H^1_G(E(\CC), \ZZ)$ be the unique element that maps to $x \in \rm H^1(E(\CC), \ZZ)^G$. Finally, consider the canonical map 
\begin{align*}
c\; \colon \rm H^1_G(E(\CC), \ZZ) \otimes F^1\rm H^2_G(E(\CC), \ZZ) \otimes \rm H^1_G(E(\CC), \ZZ)  \to F^1\rm H_G^4(A(\CC), \ZZ). 
\end{align*}

\begin{proposition}  \label{prop:finalprop}
The element $\omega = c(\alpha \otimes \beta \otimes \alpha) \in F^1\rm H_G^4(A(\CC), \ZZ(2))$ satisfies the topological condition (\ref{topologicalcondition}), and every class in $F^1\rm H^4_G(A(\CC), \ZZ(2))_0$ is a $\ZZ/2$-linear combination of $\omega$ and algebraic elements. 
In particular, $A$ satisfies the real integral Hodge conjecture if and only if $\omega$ is algebraic if and only if $[\gamma] \in \rm H^1(G, \rm H^3(A(\CC), \ZZ(2)))$ is algebraic.  
\end{proposition}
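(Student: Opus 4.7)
The plan is to establish (1) by a direct restriction computation, to reduce (2) and the second equivalence in (3) to an analysis of the graded piece $F^1/F^2 = \rm H^1(G,\rm H^3(A(\CC),\ZZ(2)))$ by invoking Proposition \ref{vanishing} and Theorem \ref{fourierreduction}, and finally to deduce (3) from (2) and Corollary \ref{questionreductioncorollary}.

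For (1), by Section \ref{subsec:threefolds} the topological condition for a class in $F^1\rm H^4_G(A(\CC),\ZZ(2))$ is the vanishing of its restriction to each $x \in A(\RR)$ in $\rm H^4_G(\{x\},\ZZ(2)) = \ZZ/2$. For $x = (p_1,p_2,p_3) \in A(\RR) = E(\RR)^3$, compatibility of cup products with restriction gives
\[
\omega|_x \;=\; \alpha|_{p_1}\cup\beta|_{p_2}\cup\alpha|_{p_3},
\]
and since $\alpha|_{p_i} \in \rm H^1_G(\{p_i\},\ZZ) = \rm H^1(G,\ZZ) = 0$, the restriction vanishes. This proves (1).

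For (2), Proposition \ref{vanishing} yields $F^3\rm H^4_G(A(\CC),\ZZ(2))_0 = 0$, and Theorem \ref{fourierreduction} ensures that $F^2\rm H^4_G(A(\CC),\ZZ(2))_0$ is algebraic; consequently the algebraicity of a class in $F^1\rm H^4_G(A(\CC),\ZZ(2))_0$ modulo algebraic classes is detected by its image in $F^1/F^2$. Compatibility of Hochschild-Serre with cup products and the Künneth formula show that $\omega$ maps to $[\gamma]$ in this quotient, so (2) is equivalent to the assertion that $\rm H^1(G,\rm H^3(A(\CC),\ZZ(2)))_0$ is $\ZZ/2$-generated by $[\gamma]$ together with algebraic classes. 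Using the Künneth decomposition for $A = E^3$, I would identify a $\ZZ/2$-basis of $\rm H^1(G,\rm H^3(A(\CC),\ZZ(2)))$ consisting of ten classes: the four type $(1,1,1)$ classes $[yxx], [xyx]=[\gamma], [xxy], [yyy]$ and the six type $(0,1,2)$ classes given by the permutations of $[1\otimes x\otimes xy]$. The plan is to realize every admissible generator other than $[\gamma]$, and each difference $[yxx]-[\gamma], [xxy]-[\gamma]$ in the $\Sym_3$-orbit of $[\gamma]$, as a real Abel-Jacobi image (via Theorem \ref{abeljacobicommutativity}) of an explicit real algebraic $1$-cycle on $A$: the type $(0,1,2)$ classes are accessible via differences of real translates of sub-elliptic curves such as $\{0\}\times\{0\}\times E$ along the real parameter of a neighbouring factor, while the differences of type $(1,1,1)$ classes and the class $[yyy]$ are accessible as pull-backs of algebraic classes from quotient abelian threefolds $A'$ admitting a factor isogenous to $E$ but with connected real locus, so that $A'$ satisfies the real integral Hodge conjecture by Theorem \ref{th:abeliansurface}.

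For (3), Corollary \ref{questionreductioncorollary} gives that $A$ satisfies the real integral Hodge conjecture if and only if $\CH_1(A)_{\tn{hom}} \to \rm H^1(G,\rm H^3(A(\CC),\ZZ(2)))_0$ is surjective, which by (2) is equivalent to the algebraicity of $[\gamma]$. The implication ``$\omega$ algebraic implies $[\gamma]$ algebraic'' is immediate; conversely, if $[\gamma]$ is algebraic, then $\omega$ agrees with an algebraic lift modulo $F^2\rm H^4_G(A(\CC),\ZZ(2))_0$, which is itself algebraic by Theorem \ref{fourierreduction}, whence $\omega$ is algebraic. The hard part will be the algebraicity analysis in (2): since $\va{\pi_0(A(\RR))}=8$, Corollary \ref{corollary:n(A)} guarantees that $\rm H^1(G,\rm H^3(A(\CC),\ZZ(2)))_0$ is a proper subgroup of the full Galois cohomology group, so one must first identify exactly which linear combinations of the ten generators satisfy the topological condition, and then cover each admissible generator by an explicit cycle or isogeny pull-back; the subtlety is that every sub-elliptic factor of $A$ inherits the disconnected real structure of $E$, so one is forced into genuinely non-trivial real isogenies in order to invoke Theorem \ref{th:abeliansurface}.
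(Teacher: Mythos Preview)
Your global reduction to the graded piece $\rm H^1(G,\rm H^3(A(\CC),\ZZ(2)))_0$ via Proposition~\ref{vanishing}, Theorem~\ref{fourierreduction} and Corollary~\ref{questionreductioncorollary} is correct and is exactly what the paper does; the restriction argument for~(1) is also the paper's. The proposal diverges in how you treat the type $(1,1,1)$ K\"unneth classes, and there the strategy does not work.

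The gap is your plan to realize the differences $[xxy]-[\gamma]$, $[yxx]-[\gamma]$ and the class $[yyy]$ as pull-backs from a quotient $A' = B'\times E'$ with $E'(\RR)$ connected. Any real isogeny $E\to E'$ that changes the topology of the real locus has even degree, so $\rm H^1(E'(\CC),\ZZ)\cong\ZZ[G]$ is cohomologically trivial and the entire K\"unneth summand $\rm H^1(G,\rm H^1_{E'}\otimes M)$ vanishes for every $G$-module $M$. Thus the type $(1,1,1)$ piece of $\rm H^1(G,\rm H^3(A'(\CC),\ZZ(2)))$ is zero, and pulling back algebraic classes from such an $A'$ contributes nothing to the type $(1,1,1)$ summand on $A$. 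The paper avoids this obstruction entirely: it shows $[xxy]-[\gamma]$ and $[yxx]-[\gamma]$ are algebraic by cupping the homologically trivial divisor giving $[x]\in\rm H^1(G,\rm H^1_E(1))$ with the real divisor class $c_1(\ca P_E)=x\otimes y - y\otimes x$ on $E\times E$.

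Your treatment of $[yyy]$ is also off target. One must not show $[yyy]$ is algebraic; one must show it does not lie in $\rm H^1(G,\rm H^3)_0$ at all. The paper proves this (Claim~\ref{claim:9.6}) by restricting to the $G$-submanifold $Y=(\bb S^-)^3\subset A(\CC)$ and using a dimension count together with an isomorphism $\rm H^4_G(Y,\ZZ)\xrightarrow{\sim}\rm H^0(Y^G,\ZZ/2)$ to see that no lift of $[yyy]$ can satisfy the topological condition. Note also that Corollary~\ref{corollary:n(A)} says nothing in the case $\va{\pi_0(A(\RR))}=8$; the properness of $\rm H^1(G,\rm H^3)_0$ is precisely the content of Claim~\ref{claim:9.6}, not a consequence of that corollary.
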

\begin{lemma} \label{second-countable}
Let $X$ be a paracompact Hausdorff topological $G$-space and $F$ a $G$-sheaf on $X$. 
For $m > \dim(X)$, the canonical map $\rm H^m_G(X, F) \to \rm H^m_G(X^G, F|_{X^G})$ is an isomorphism. 
\end{lemma}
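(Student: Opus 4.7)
The approach is to exploit the fact that $G$ acts freely on the open complement $U := X \setminus X^G$ of the fixed locus. Writing $i: X^G \hookrightarrow X$ and $j: U \hookrightarrow X$ for the closed and open inclusions, the short exact sequence of $G$-sheaves $0 \to j_! j^* F \to F \to i_* i^* F \to 0$ induces, via the identification $\rm H^m_G(X, i_* i^* F) = \rm H^m_G(X^G, F|_{X^G})$ (valid since $i$ is a closed immersion), a long exact sequence
\[
\cdots \to \rm H^m_G(X, j_! j^* F) \to \rm H^m_G(X, F) \to \rm H^m_G(X^G, F|_{X^G}) \to \rm H^{m+1}_G(X, j_! j^* F) \to \cdots.
\]
It therefore suffices to prove that $\rm H^m_G(X, j_! j^* F) = 0$ whenever $m > \dim X$. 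For this I would invoke the Hochschild--Serre spectral sequence
\[
E_2^{p,q} = \rm H^p(G, \rm H^q(X, j_! j^* F)) \;\Longrightarrow\; \rm H^{p+q}_G(X, j_! j^* F)
\]
together with the classical cohomological-dimension bound $\rm H^q(X, j_! j^* F) = 0$ for $q > \dim X$ on any paracompact Hausdorff space of dimension $\dim X$. Only terms with $p \geq 1$ can then contribute to $\rm H^m_G$ in degrees $m > \dim X$, so the task reduces to showing that each $\rm H^q(X, j_! j^* F)$ is $G$-cohomologically trivial.

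This cohomological-triviality statement is the main obstacle, and it encodes the intuition that since $G$ acts freely on the support $U$ of $j_! j^* F$, the cohomology should be ``induced'' from the trivial subgroup. In the compact case---which covers the intended application to $X = A(\CC)$ for an abelian variety $A$ over $\RR$---I would make this precise by identifying $\rm H^q(X, j_! j^* F) \cong \rm H^q_c(U, j^* F)$ and then, via descent along the free $G$-covering $\pi: U \to U/G$, with $\textnormal{Ind}_{\{1\}}^G \rm H^q_c(U/G, \bar F)$, where $\bar F$ denotes the descent of $j^* F$ to $U/G$; such induced modules are standardly $G$-cohomologically trivial. In the general paracompact setting I would instead compute $\rm H^\bullet(X, j_! j^* F)$ via a $G$-equivariant Godement resolution, whose global sections in each degree decompose as a product over $G$-orbits in $U$ with each free orbit of size $|G| = 2$ contributing an induced summand; the desired triviality then propagates to the cohomology of the complex because $\rm H^p(G, -)$ commutes with products for the finite group $G$ and each term of the resolution is cohomologically trivial, so $R\Gamma^G$ reduces to the naive $G$-invariants and commutes with passage to cohomology.
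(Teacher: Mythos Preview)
Your reduction to the vanishing of $\rm H^m_G(X, j_!j^*F)$ for $m > \dim X$ via the long exact sequence attached to $0 \to j_!j^*F \to F \to i_*i^*F \to 0$ is correct and matches the paper. The gap is in the next step: the assertion that each $\rm H^q(X, j_!j^*F)$ is $G$-cohomologically trivial is false in general, and neither of your two arguments establishes it. For a counterexample, take $X = S^2$ with $G = \ZZ/2$ acting by rotation through $\pi$, so $X^G = \{N,S\}$ and $U$ is an open cylinder; then $\rm H^1_c(U,\ZZ) \cong \ZZ$ carries the \emph{trivial} $G$-action (the rotation is homotopic to the identity), which is neither induced nor cohomologically trivial, and the Hochschild--Serre spectral sequence for $j_!\underline{\ZZ}_U$ has nonzero differentials. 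In particular your compact-case identification $\rm H^q_c(U,j^*F) \cong \textnormal{Ind}_1^G \rm H^q_c(U/G,\bar F)$ fails here: the left side has rank $1$, the right side rank $2$. As for the Godement argument, cohomological triviality of the terms $C^n$ of a complex does \emph{not} propagate to $\rm H^q(C^\bullet)$---for instance $\ZZ[G] \xrightarrow{1+\sigma} \ZZ[G]$ has cohomology $\ZZ$ with the sign action in both degrees---so the phrase ``the desired triviality then propagates to the cohomology of the complex'' is unjustified, and ``$R\Gamma^G$ commutes with passage to cohomology'' is exactly the step that breaks.

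The paper bypasses this issue by using the \emph{other} spectral sequence for equivariant cohomology, the Leray-type sequence for $X \to X/G$ from Grothendieck's T\^ohoku paper (Th\'eor\`emes~5.2.1 and~5.3.1): since $G$ acts freely on the support of $F_U := j_!j^*F$, the higher sheaves $\mathcal H^q(G,F_U)$ vanish and one obtains directly $\rm H^m_G(X, F_U) \cong \rm H^m(X/G, \mathcal H^0(G, F_U))$ for $m > 0$; this vanishes for $m > \dim(X/G) \le \dim X$ by the cohomological-dimension bound on the paracompact Hausdorff space $X/G$. Your Godement idea can in fact be salvaged along the same lines: rather than trying to control $\rm H^q(C^\bullet)$, note that the complex $(C^\bullet)^G$ of $G$-invariant global sections of the equivariant Godement resolution is itself (a flabby resolution computing) $\rm H^\bullet(X/G,\pi_*^G F_U)$, which recovers the paper's identification; the point is to pass to the quotient space, not to the Hochschild--Serre page.
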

\begin{proof}
Define $Z = X^G$ and let $j \colon U = X \setminus Z \to X$ and $\iota \colon Z \to X$ be the canonical inclusions. Write $F_U = j_!(F|_U)$ and $F_Z = \iota_\ast(F|_Z)$. 
The canonical homomorphism $\rm H^i_G(Z, F|_Z) \to \rm H^i_G(X, F_Z)$ is an isomorphism, see \cite[Propositions 2.2.1 \& 3.1.4]{tohoku}, 
which gives an exact sequence 
$
\rm H^m_G(X, F_U) \to \rm H^m_G(X,F) \to \rm H^m_G(Z, F|_Z) \to \rm H^{m+1}_G(X, F_U). 
$
By \cite[Theorems 5.2.1 \& 5.3.1]{tohoku}, 
we have $
\rm H^i(X/G, \mr H^0(G, F_U)) = \rm H^i_G(X, F_U)$ for each positive $i$. 
 Moreover, $\dim(X/G) \leq \dim(X)$ by \cite[Chapter 6, Proposition 4.11]{pears}. On paracompact Hausdorff spaces, sheaf cohomology coincides with \v{C}ech cohomology, see \cite[Th\'eor\`eme 5.10.1]{godement-faisceaux}. 
 In particular, $\rm H^i(X/G, \mr H^0(G, F_U))  = (0)$ for $i > \dim(X)$. 
\end{proof}
\begin{proof}[Proof of Proposition \ref{prop:finalprop}] Let $p = (p_1, p_2, p_3) \in A(\RR)$. Since $\rm H^1_G(\{p_i\}, \ZZ) = (0)$, we have $\omega|_p = 0$, hence $\omega$ satisfies the topological condition. 
By Theorem \ref{fourierreduction}, 
 it suffices to show that $ \rm H^1(G, \rm H^3(A(\CC), \ZZ(2)))_0 $ is generated by $[\gamma]$ and algebraic elements. We have 
\[
\rm H^1(G, \rm H^3(A(\CC), \ZZ)) \cong \bigoplus_{i + j + k = 3} \rm H^1 \left(G, \rm H^i(E(\CC), \ZZ) \otimes \rm H^j(E(\CC), \ZZ) \otimes \rm H^k(E(\CC), \ZZ) \right). 
\]
Define $\rm H^i_E(j)$ as in (\ref{lastequation}).  
The map $\rm H^2_E(1)^G \otimes \rm H^1(G, \rm H^1_E(1)) \to \rm H^1(G, \rm H^2_E(1) \otimes \rm H_E^1(1))$ is an isomorphism, 
and lifts to $F^0\rm H^2_G(E(\CC), \ZZ(1)) \otimes F^1\rm H^2_G(E(\CC), \ZZ(1)) \to F^1\rm H^4_G(A(\CC), \ZZ(2))$. Consequently, the group $\rm H^1(G, \rm H^i_E \otimes \rm H^j_E \otimes \rm H^k_E) \subset \rm H^1(G, \rm H^3(A(\CC), \ZZ))$ is algebraic for each distinct $i,j,k$ such that $i + j + k = 3$. 
\begin{claim} \label{claim:9.6}
Let $\xi = [\pi_1^\ast(y) \cdot \pi_2^\ast(y) \cdot \pi_3^\ast(y)] \in \rm H^1(G, \rm H^3(A(\CC), \ZZ(2)))$. There exists no lift $\tau \in F^1\rm H^4_G(A(\CC), \ZZ(2))$ of $\xi$ that satisfies topological condition (\ref{topologicalcondition}). 
\end{claim}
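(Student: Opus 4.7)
The plan is to reduce modulo $2$ and perform an explicit calculation in the mod-$2$ equivariant cohomology ring of $A(\CC)$. The topological condition $\phi_0(\tau) = 0$ depends only on the mod-$2$ reduction $\bar\tau \in F^1\rm H^4_G(A(\CC), \ZZ/2)$ of $\tau$. Since $\sigma^\ast y_i = -y_i \equiv y_i \bmod 2$, the $G$-action on $\rm H^\bullet(A(\CC), \ZZ/2)$ is trivial, hence $\rm H^1(G, \rm H^3(A(\CC), \ZZ/2)) \cong \rm H^3(A(\CC), \ZZ/2)$; and compatibility of the Hochschild--Serre filtration with mod-$2$ reduction forces the $F^1/F^2$-image of $\bar\tau$ in this group to equal $\bar\xi = y_1 y_2 y_3$. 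It therefore suffices to show that no class $\bar\tau \in F^1\rm H^4_G(A(\CC), \ZZ/2)$ with these two properties satisfies $\phi_0(\bar\tau) = 0$.

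Since $E(\RR)$ is disconnected, $E(\CC)$ is $G$-equivariantly diffeomorphic to $S^1_{\tn{triv}} \times S^1_\sigma$, where $S^1_\sigma$ carries the involution $v \mapsto v^{-1}$ with fixed points $\pm 1$. A Mayer--Vietoris argument on the cover of $S^1_\sigma$ by neighbourhoods of its two fixed points yields
\[
\rm H^\bullet_G(S^1_\sigma, \ZZ/2) \;=\; \ZZ/2[e_+, e_-]/(e_+ e_-),
\]
where $e_\pm$ restricts to the generator $t \in \rm H^1(BG, \ZZ/2)$ at the fixed point $\pm 1$ and to $0$ at the other, and $t = e_+ + e_-$. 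As a $\ZZ/2[t]$-module this ring is free of rank $2$ on $\{1, e_+\}$, so iterated equivariant K\"unneth exhibits $\rm H^\bullet_G(A(\CC), \ZZ/2)$ as a free $\ZZ/2[t]$-module of rank $64$ on the monomials $a_1 a_2 a_3$ with $a_i \in \{1, x_i, e_{i,+}, x_i e_{i,+}\}$, where $x_i$ denotes the generator pulled back from $S^1_{\tn{triv}}$ in the $i$-th factor. The forgetful map sends $e_{i,+} \mapsto y_i$, so the only degree-$3$ basis monomial involving no $x_j$ is $e_{1,+} e_{2,+} e_{3,+}$ (which forgets to $y_1 y_2 y_3$). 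Writing $\bar\tau$ in this $\ZZ/2[t]$-basis and letting $c_S \in \ZZ/2$ be the coefficient of $t^{4-|S|} \prod_{i \in S} e_{i,+}$ for $S \subset \{1, 2, 3\}$, the lifting condition forces $c_{\{1,2,3\}} = 1$, while the remaining $c_S$ for $|S| \leq 2$ are unconstrained.

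Finally, $A(\RR) = E(\RR)^3$ is a disjoint union of $8$ real $3$-tori indexed by $\epsilon \in \{0, 1\}^3$ recording which component of $E(\RR)$ lies in each factor. On the component indexed by $\epsilon$, the restriction of $e_{i,+}$ equals $t$ when $\epsilon_i = 0$ and $0$ when $\epsilon_i = 1$, whereas each $x_i$ restricts to a class of positive degree on the torus; consequently only the ``no-$x$'' monomials contribute to the summand $\rm H^0(A(\RR), \ZZ/2) \otimes \rm H^4(BG, \ZZ/2) = (\ZZ/2)^8$, and a short calculation gives
\[
\phi_0(\bar\tau)_\epsilon \;=\; \sum_{S \subset Z(\epsilon)} c_S, \qquad Z(\epsilon) := \{i : \epsilon_i = 0\}.
\]
The incidence matrix $(1_{S \subset Z(\epsilon)})_{\epsilon, S}$ is the zeta matrix of the Boolean lattice on $\{1, 2, 3\}$, which is invertible over $\ZZ/2$ by M\"obius inversion. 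Hence $\phi_0(\bar\tau) = 0$ forces every $c_S$ to vanish, contradicting $c_{\{1,2,3\}} = 1$. The main technical obstacle is setting up the $G$-equivariant K\"unneth decomposition and correctly pinning down $\bar\xi$ under the forgetful map; once these are in place, the conclusion is pure linear algebra over $\ZZ/2$.
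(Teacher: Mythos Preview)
Your proof is correct, but it takes a different route from the paper. The paper instead restricts along a section $s\colon Y\to A(\CC)$ of the projection onto $Y=\bb S^-\times\bb S^-\times\bb S^-$ (the product of the three $y$-circles). Using a general lemma that $\rm H^m_G(X,\ZZ)\xrightarrow{\sim}\rm H^m_G(X^G,\ZZ)$ for $m>\dim X$, the paper observes that the restriction $\rm H^4_G(Y,\ZZ)\to\rm H^0(Y^G,\ZZ/2)$ is an isomorphism of $(\ZZ/2)$-vector spaces of rank $8$; hence any class on $Y$ satisfying $\phi_0=0$ must vanish, while $s^\ast\tau$ would have to be a nonzero lift of the generator $[\eta]\in\rm H^1(G,\rm H^3(Y,\ZZ))$. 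Your argument stays on $A(\CC)$, computes $\rm H^\bullet_G(A(\CC),\ZZ/2)$ explicitly via Mayer--Vietoris and equivariant K\"unneth, and then reduces the vanishing of $\phi_0$ to the invertibility of the Boolean zeta matrix. The paper's approach is shorter and more conceptual once the restriction lemma is available; yours is self-contained and makes the obstruction completely explicit, at the cost of setting up the Künneth basis and tracking the Hochschild--Serre filtration through it. Both arguments ultimately exploit the same phenomenon: the eight components of $A(\RR)$ separate the eight ``no-$x$'' monomials.
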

Let $\bb S = \set{z \in \CC \mid \va{z} = 1} \subset \CC$. Define $\bb S^+$ to be the differentiable $G$-manifold $\bb S$ with trivial $G$-action, and let $\bb S^-$ be the differentiable $G$-manifold $\bb S$ such that $\sigma(z) = \overline z$. Define $Y =  \bb S^- \times \bb S^- \times \bb S^- $. Any $p \in \bb S^+$ defines a section 
$
s \colon Y \to A(\CC)$ of the projection $ q \colon A(\CC) \to  Y. 
$
For a generator $\eta \in \rm H^3( Y, \ZZ)$, one has $q^\ast([\eta]) = \xi$ with respect to  
$
q^\ast \colon \rm H^1(G, \rm H^3( Y, \ZZ)) \to \rm H^1(G, \rm H^3(A(\CC), \ZZ))$. We have $\dim_{\ZZ/2} \rm H^4_G(Y, \ZZ) = 8$ because the Hochschild-Serre spectral sequence for $Y$ degenerates (since $Y$ is a direct factor of an abelian variety). The map $
\rm H^4_G(Y, \ZZ)  \to \rm H^0(Y^G, \ZZ/2)$ is surjective by Lemma \ref{second-countable} and \cite[\S 1.2.2]{BW20}; because the $\ZZ/2$-ranks on both sides agree, it is an isomorphism. 
If $\tau \in F^1\rm H^4_G(A(\CC), \ZZ)_0$ lifts $\xi = q^\ast([\eta])$, then $s^\ast(\gamma)$ is a lift of $s^\ast q^\ast([\eta]) = [\eta]$ such that $s^\ast(\gamma) |_p = 0 \in \rm H^4_G(\{p\}, \ZZ)$ for $p \in Y^G$. 
This is absurd, and Claim \ref{claim:9.6} follows. Let $\Lambda = \rm H^1(E(\CC), \ZZ)$, 
and consider the basis
\begin{align*}
\set{[x \otimes x \otimes y], [x \otimes y \otimes x], [y \otimes x \otimes x], [y \otimes y \otimes y]} \subset \rm H^1(G, \Lambda \otimes \Lambda \otimes \Lambda). 
\end{align*}
To finish the proof, it suffices to show that 
     $        [x \otimes x \otimes y] - [x \otimes y \otimes x] $ and $ 
              [y \otimes x \otimes x] - [x \otimes y \otimes x] $ are algebraic. Since these two elements are in the same orbit under the action of $\Aut(A)$, it suffices to show that one of them is algebraic. This holds, because we have
       \begin{align*}
x \otimes y - y \otimes x = c_1(\ca P_E)  \in \rm H^1(E(\CC), \ZZ) \otimes \rm H^1(E(\CC), \ZZ) \subset \rm H^2(E^2(\CC), \ZZ),
       \end{align*}
where $\ca P_E$ is the Poincar\'e bundle of $E$ viewed as a line bundle on $E \times E$ via the principal polarization $E \xrightarrow{\sim} E^\vee$, and because $[x] \in \rm H^1(G, \rm H^1(E(\CC), \ZZ(1)))$ is algebraic. 
\end{proof}

\section{Connections with the Griffiths group of an abelian threefold} \label{sec:ceresa}

For a smooth projective variety $X$ over $\CC$, let $\rm{Griff}^2(X) = \CH^2(X)_{\tn{hom}} / \CH^2(X)_{\tn{alg}}$ be the group of homologically trivial codimension-two cycles on $X$ modulo the algebraically trivial ones. 
Non-zero elements in $\rm{Griff}^2(X)$ are difficult to find. For a very general complex abelian threefold $A$, the groups $\rm{Griff}^2(A) \otimes \QQ$ and $\rm{Griff}^2(A) \otimes \ZZ/\ell$ are infinite for every prime number $\ell$ \cite{nori-AV-ceresa, schoen-chownotfinite, rosenschon, totaro-modulo2}. 
These statements rely on non-vanishing of Ceresa cycles on Jacobians of very general curves of genus three \cite{ceresa, hain-ceresa, rosenschon, totaro-modulo2}. In this section, we use different methods to study the subgroup $\rm{Griff}^2(A)^G \subset \rm{Griff}^2(A)$ when $A$ is defined over $\RR$, 
relating this group to the real integral Hodge conjecture. 

For the Jacobian $J(C)$ of a smooth genus three curve $C$ over $\RR$ with $p \in C(\RR)$, one can consider the \emph{Ceresa cycle} $\Xi_p = \iota_p(C) - [-1]^\ast\iota_p(C)$. Here, $\iota_p \colon C \to J(C)$ is the Abel-Jacobi embedding induced by $p$. The resulting class $[\Xi] \in \rm{Griff}^2(J(C)_\CC)^G$ is independent of $p$. 

\begin{proof}[Proof of Theorem \ref{thm:ceresa}]
Let $A$ be a real abelian threefold, and consider the 
second intermediate Jacobian $J^2(A_\CC)$ of $A_\CC$ (see Section \ref{section:AbelJacobi}). Let $J^2(A_\CC)_{\tn{alg}}$ be the algebraic part of $J^2(A_\CC)$ and $J^2(A_\CC)_{tr} = J^2(A_\CC)/J^2(A_\CC)_{\tn{alg}}$ its transcendental part \cite[\S 2.2.2]{voisin}. We get an exact sequence of weight $-1$ Hodge structures $0 \to \Lambda_{\alg} \to \Lambda \to \Lambda_{\tn{tr}} \to 0$; by Theorem \ref{abeljacobicommutativity} and \cite[Corollaire 12.19]{voisin}, 
we obtain a commutative diagram with exact rows:
\begin{align}\label{10:fundamentaldiagram}
\xymatrixrowsep{1.1pc}
\begin{split}
\xymatrix{
 \left(\CH^2(A_\CC)_{\alg}\right)^G \ar[r]\ar[d]& \left(\CH^2(A_\CC)_{\tn{hom}} \right)^G\ar[d] \ar[r]& \rm{Griff}^2(A_\CC)^G\ar[d]  \\
  \rm H^1(G, \Lambda_{\tn{alg}}) \ar[r]& \rm H^1(G, \Lambda) \ar[r] & \rm H^1(G, \Lambda_{\tn{tr}}). 
}
\end{split}
\end{align}
Define $\rm H^1(G, \Lambda)_0 \subset \rm H^1(G, \Lambda)$ as in Definition \ref{isogenydef:subgroups}. The real integral Hodge conjecture for $A$ would imply that $\rm{Griff}^2(A_\CC)^G \otimes \ZZ/2 \to \rm H^1(G, \Lambda_{\tn{tr}})$ surjects onto the image of 
\begin{align}\label{nicemap}
\rm H^1(G, \Lambda)_0 \to \rm H^1(G, \Lambda_{\tn{tr}}).
\end{align}
Our goal is to show that if $A$ is an abelian threefold as in the statement of the theorem, then (\ref{nicemap}) is non-zero, whereas 
if $A = J(C)$ is the Jacobian of a real genus three curve with $p \in C(\RR)$ then the Ceresa cycle $[\Xi_p] \in \CH^2(J(C))_{\tn{hom}}$ maps to zero in $\rm H^4_G(J(C)(\CC), \ZZ(2))$. 

Let us first prove the latter statement. 
Assume that $C$ is hyperelliptic and let $p \in C(\RR)$ be a real Weierstrass point. We have $[\Xi_p] = 0 \in \CH^2(A)$, and for arbitrary $q \in C(\RR)$ there exists $Q \in J(C)(\RR)$ such that the translation by $Q$ map $\tau_Q \colon J(C) \to J(C)$ satisfies $\tau_Q \circ \iota_p = \iota_q$. 
For any abelian threefold $B$ over $\RR$, any cycle $\Gamma \in \CH^2(B)$ and any $Q \in B(\RR)^0$, the cycle $\tau_Q^\ast(\Gamma) - \Gamma \in \CH^2(B)$ is real algebraically equivalent to zero (see \cite[Definition 1.5]{vanhamel}). 
In particular, $[\tau_Q^\ast(\Gamma)] = [\Gamma] \in \rm H^4_G(B(\CC), \ZZ(2))$ by \cite[Lemma 1.2]{vanhamel}. In our case, this gives
\[
0 = 
(\tau_Q)_\ast[\Xi_p] = 
 (\iota_q)_\ast(C) - (\tau_{2Q})_\ast(-1)^\ast(\iota_q)_\ast(C)  = 
[\Xi_q] \in \rm H^4_G(A(\CC), \ZZ(2)). 
\]
Next, suppose that $C$ is a non-hyperelliptic curve of genus three over $\RR$ equipped with $p \in C(\RR)$. 
Let $K \subset \ca M_3(\RR)$ be the connected component containing $[C] \in \ca M_3(\RR)$ (see Section \ref{sec:modtors}). Curves in $K$ have non-empty real locus, and $K$ contains hyperelliptic curves by \cite[Proposition 3.4]{CirreHyperelliptic}. In particular (see \cite{seppalasilhol}, \cite[Proof of Theorem 1.3(B)]{MR4479836}), there exists a contractible complex manifold $B'$, a family of genus three curves $\mr C' \to B'$ with a real structure, and 
two points $a, b \in B'(\RR)$ such that $B'(\RR)$ is connected, $\mr C'_a \cong C$ and $\mr C'_b \eqqcolon C_2$ is hyperelliptic. 

Consider the family of genus three curves $ \pi \colon \mr C' \times_{B'}\mr C' \eqqcolon \mr C \to B \coloneqq \mr C'$. This family $\pi$ has a canonical real structure and equivariant section $\Delta \colon B \to \mr C$. By \cite[Lemma 5.1]{MR4479836}, the real structure on $\pi$ extends to a real structure on the relative Jacobian $J(\mr C)$ of $\mr C$ over $B$; moreover, $\Delta$ induces a $G$-equivariant map $\Psi \colon \mr C \to J(\mr C)$ over $B$. 

Note that $\mr C'(\RR) = B(\RR) \neq \emptyset$ because $C(\RR) \neq \emptyset$. Let $f \colon [0,1] \to B(\RR)$ be a path whose image in $B'(\RR)$ connects $a$ and $b$, and let $q = f(1) \in B(\RR)$. Then $p$ and $q \in B(\RR)$ lie in the same component $K \subset B(\RR)$. The family of Abel-Jacobi maps $\Psi|_{K} \colon \mr C|_{K} \to J(\mr C)|_{K} \to K$ is locally trivial over $K$, so there are $G$-equivariant Lie group diffeomorphisms 
\[
\rho \colon J(C) \cong  J(\mr C_p) \cong J(\mr C_{q}) \cong J(C_2) \; \tn{ with }\;  0 = \rho^{\ast}[\iota_{q}(C_2)] = [\iota_p(C)]  \in \rm H^4_G(J(C)(\CC), \ZZ(2)). 
\]
%
\noindent
We conclude that Ceresa cycles map to zero in equivariant cohomology as desired. 

Let $(A, \lambda)$ be any polarized complex abelian threefold. The embedding of weight $-1$ Hodge structures $\rm H^1(A(\CC), \ZZ(1)) \to \rm H^3(A(\CC), \ZZ(2))$ induced by the polarization $\lambda$ has saturated image, thus gives an embedding of complex tori $\wh A(\CC) = \Pic^0(A)(\CC) \to J^2(A)$. 
\begin{lemma}\label{lemma:charles}
If $(A, \lambda)$ is very general, then $J^2(A)_{\alg}= \wh A$. 
\end{lemma}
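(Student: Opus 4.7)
The plan is to translate the question into Hodge theory: abelian subvarieties of the weight $-1$ intermediate Jacobian $J^2(A)$ correspond bijectively to rational sub-Hodge structures $W \subset \rm H^3(A, \QQ)$ of level at most one, i.e.\ those with $W^{3,0} = 0$. Under this bijection, the algebraic part $J^2(A)_{\alg}$ corresponds to the unique maximal such $W$, while the inclusion $\wh A \hookrightarrow J^2(A)$ induced by the polarization corresponds to the Lefschetz sub-Hodge structure $L \cdot \rm H^1(A, \QQ) \cong \rm H^1(A, \QQ)(-1) \subset \rm H^3(A, \QQ)$, where $L = \lambda \cup (-)$ is the Lefschetz operator. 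It therefore suffices to prove, for $(A, \lambda)$ very general, that every sub-Hodge structure $W \subset \rm H^3(A, \QQ)$ with $W^{3,0} = 0$ is contained in $L \cdot \rm H^1(A, \QQ)$.

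For this I would exploit the Lefschetz decomposition
\[
\rm H^3(A, \QQ) \;=\; L \cdot \rm H^1(A, \QQ) \;\oplus\; \rm H^3_{\rm{prim}}(A, \QQ),
\]
which is equivariant for the Mumford--Tate group of $A$. For a very general principally polarized complex abelian threefold, this Mumford--Tate group equals the full symplectic similitude group $\GSp(\rm H^1(A, \QQ), \lambda) \cong \GSp_6$, a classical fact whose failure locus in $\ca A_3$ is a countable union of proper analytic subvarieties. Under the induced $\Sp_6$-action, the primitive part $\rm H^3_{\rm{prim}}(A, \QQ)$ is isomorphic to the fundamental representation of highest weight $\omega_3$ (of dimension $14$), which is irreducible, and hence admits no proper non-zero sub-$\QQ$-Hodge structure. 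Moreover, its Hodge decomposition satisfies $h^{3,0}_{\rm{prim}} = 1$.

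Granted these inputs, the argument concludes quickly. Let $W \subset \rm H^3(A, \QQ)$ be a sub-Hodge structure with $W^{3,0} = 0$. The projection $\pi \colon \rm H^3(A, \QQ) \twoheadrightarrow \rm H^3_{\rm{prim}}(A, \QQ)$ coming from the Lefschetz decomposition is a morphism of $\QQ$-Hodge structures, so $\pi(W)$ is a sub-Hodge structure of $\rm H^3_{\rm{prim}}(A, \QQ)$ satisfying $\pi(W)^{3,0} = 0$. Irreducibility of $\rm H^3_{\rm{prim}}(A, \QQ)$ together with $h^{3,0}_{\rm{prim}} \neq 0$ forces $\pi(W) = 0$, whence $W \subset L \cdot \rm H^1(A, \QQ)$. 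This gives $J^2(A)_{\alg} \subset \wh A$, and equality follows since the reverse inclusion is tautological.

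The main obstacle is the Mumford--Tate input: one must know that the Mumford--Tate group of a very general principally polarized complex abelian threefold is the full $\GSp_6$, which is what guarantees the irreducibility of the Hodge structure $\rm H^3_{\rm{prim}}(A, \QQ)$. Once this is granted, the rest of the argument is formal Hodge theory.
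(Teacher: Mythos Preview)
Your proof is correct and takes essentially the same approach as the paper, which condenses the argument to two citations: Peters--Steenbrink for the reduction to irreducibility of $\rm H^3_{\rm{prim}}(A,\QQ)$ under the generic Mumford--Tate group $\GSp_6$, and Fulton--Harris for that irreducibility. One minor imprecision worth flagging: you assert that $J^2(A)_{\alg}$ corresponds to the \emph{maximal} level-$\leq 1$ sub-Hodge structure of $\rm H^3(A,\QQ)$, but that statement is an instance of the generalized Hodge conjecture and is not something you should take for granted; fortunately your argument does not need it, since knowing merely that $J^2(A)_{\alg}$ is \emph{some} abelian subvariety (which is standard) already gives $J^2(A)_{\alg}\subset\wh A$ via your irreducibility step, and the reverse inclusion comes from intersecting algebraically trivial divisors with a theta divisor.
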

\begin{proof}
By \cite[Theorem 17]{peterssteenbrinkmon}, it suffices to show that the representation of $\tn{GSp}_6(\QQ)$ on $\rm H^3(A(\CC), \QQ)_{\tn{prim}}$ is irreducible. This follows from \cite[Theorem 17.5]{fultonharris}. 
\end{proof}
\noindent
Let $\ca A_{3, \delta}$ be the moduli stack of polarized abelian threefolds of polarization type $\delta$, with coarse moduli space $\pi \colon \ca A_{3,\delta} \to \msf A_{3,\delta}$. By Lemma \ref{lemma:charles}, there is a countable union $\cup_iZ_i \subsetneq \msf A_{3, \delta}(\CC)$ of complex analytic subvarieties $Z_i \subset \msf A_{3,\CC}$ with the following property: if the image of a point $[(A, \lambda)] \in \ca A_{3,\delta}(\RR)$ under the map $\pi_\RR \colon \ca A_{3,\delta}(\RR) \to \msf A_{3,\delta}(\RR) \subset \msf A_{3,\delta}(\CC)$ does not lie in $\cup_i Z_i$, then $J^2(A_\CC)_{\tn{alg}} = \wh A(\CC)$. Let $(A, \lambda)$ be such a polarized abelian threefold over $\RR$, and assume $A(\RR)$ is not connected. Via the spectral sequence (\ref{hochschild}), Corollary \ref{corollary:n(A)} and Claim \ref{claim:9.6} imply that the rank of $\rm H^1(G, \rm H^1(A(\CC), \ZZ(1)))$ is smaller than the rank of $\rm H^1(G, \rm H^3(A(\CC), \ZZ(2)))_0$. 
Thus, there are elements in $\rm H^1(G, \Lambda)_0$ which are not in the image of $\rm H^1(G, \Lambda_{\tn{alg}}) \to \rm H^1(G, \Lambda)$, see diagram (\ref{10:fundamentaldiagram}). In particular, the map (\ref{nicemap}) is non-zero, and Theorem \ref{thm:ceresa} follows. 
\end{proof}

\printbibliography

\textsc{Olivier de Gaay Fortman, Institute of Algebraic Geometry, Leibniz University Hannover, Welfengarten 1, 30167 Hannover, Germany}\par\nopagebreak 
  \textit{E-mail address:} \texttt{degaayfortman@math.uni-hannover.de}

\end{document}